\newcommand*{\MCG}{\mathbf{MCG}}
\newcommand*{\Braid}{\mathbf{Braid}}
\newcommand*{\Sym}{\mathbf{Sym}}
\newcommand*{\RO}{R_{\mathrm{or}}}
\title[
    Secondary stability for nonorientable surfaces
]{
    Secondary homological stability for mapping class groups of nonorientable surfaces
}
\author{Max Vistrup}
\date{May 29, 2021}
\begin{document}

\begin{abstract}
    Using the Galatius--Kupers--Randal-Williams framework of cellular $E_2$-algebras,
        we prove a secondary stability theorem for mapping class groups of nonorientable surfaces.
    As a corollary,
        we obtain a new best known stability range for
            the homology of the mapping class groups of nonorientable surfaces
            with respect to adding torus holes.
\end{abstract}

\maketitle

\tableofcontents
\newpage


\section{Introduction}
\label{sec:introduction}

For a compact surface $S$, orientable or not,
    let $\Homeo^\del(S)$ be
        the space of homeomorphisms of $S$
            fixing $\del S$ pointwise
            equipped with the compact--open topology.
Composition of homeomorphisms endows $\Homeo^\del(S)$
    with the structure of a topological group.
The \emph{mapping class group} of $S$ is the group of path components
    \[
        \Gamma(S) \defeq \pi_0 \Homeo^\del(S);
    \]
    that is, the group of isotopy classes of boundary-fixing homeomorphisms of $S$.
For each $g \geq 1, r \geq 0$,
    let
    \[
        N_{g, r}
        \defeq
        (\RP^2)^{\# g} - (\Int(D^2) \times \{1, \ldots, r\})
    \]
    denote the nonorientable surface of \emph{genus} $g$ with $r$ boundary components.
Similarly, let $S_{g, r}$ denote
    the orientable surface of genus $g$ with $r$ boundary components.
As a convention,
    we let
    $
        N_{0, r} \defeq S_{0, r}
    $
    be the $r$-punctured disk.
For convenience,
    we write
    $
        \Gamma_{g, r} \defeq \Gamma(S_{g, r})
    $
    and
    $
        N\Gamma_{g, r} \defeq \Gamma(N_{g, r}).
    $

For $g \geq 1$,
    there are \emph{stabilization maps}
    $
        \Gamma_{g - 1, 1} \to \Gamma_{g, 1}
    $
    given by
        extending a homeomorphism of $S_{g - 1, 1}$
            along $S_{g - 1, 1} \incl S_{g, 1}$
            putting the identity $\id_{S_{1, 2}}$ outside $S_{g - 1, 1}$.
Harer \cite{Harer85} proved that the groups $\Gamma_{g, 1}$ exhibit \emph{homological stability}
    with respect to the genus $g$.
Specifically, he proved that $H_d(\Gamma_{g, 1}, \Gamma_{g - 1, 1})$ vanishes
    in a range of $d$ increasing with $g$
    by slope $\frac13$.
Subsequent papers have improved this range.
Ivanov \cite{Ivanov89} attained a slope $\frac12$ range.
Boldsen \cite{Boldsen12} attained a slope $\frac23$ range,
    or
    $
        d \leq \floor{\frac{2g - 2}{3}}
    $
    to be precise
    (see \cite{Wahl13} for an exposition of the proof).
More recently,
    Galatius, Kupers, and Randal-Williams \cite[\thm{B(i)}]{GKRW19MCG} attained the range
    $
        d \leq \floor{\frac{2g - 1}{3}}
        \iff
        \frac{d}{g} < \frac23.
    $

A similar story transpired for
    mapping class groups $N\Gamma_{g, 1}$ of nonorientable surfaces.
In this case, there are two relevant stabilization maps
    that increase genus.
First, for $g \geq 3$,
    there is a \emph{torus hole stabilization map}
    \begin{equation}\label{eqn:torus_hole_stabilization}
        N\Gamma_{g - 2, 1} \to N\Gamma_{g, 1},
    \end{equation}
    defined as above by taking the boundary sum with the punctured torus $S_{1, 1}$.
Second, for $g \geq 1$,
    there is a \emph{crosscap stabilization map}
    \begin{equation}\label{eqn:crosscap_stabilization}
        N\Gamma_{g - 1, 1} \to N\Gamma_{g, 1},
    \end{equation}
    defined as before but by taking the boundary sum
        with the Möbius strip $N_{1, 1}$ (a copy of which is known as a \emph{crosscap})
        instead of $S_{1, 1}$.
These maps are well-defined at least up to an inner automorphism of the target,
    which means that there is no ambiguity on group homology.

Wahl \cite{Wahl07} was the first to prove homological stability for $N\Gamma_{g, 1}$.
She proved homological stability with respect to the crosscap stabilization map,
    showing that
    $
        H_d(N\Gamma_{g, 1}, N\Gamma_{g - 1, 1}) = 0
    $
    in a range of $d$ increasing with $g$ with slope $\frac14$.
In terms of Euler characteristic,
    the genus of an orientable surface is worth twice the genus of a nonorientable surface,
    and thus this range is analogous to Ivanov's range
        for the orientable case discussed above.
Randal-Williams \cite[1.4]{RandalWilliams16} improved Wahl's range to a slope $\frac13$ range,
    which in turn is analogous to the improvement of Boldsen
        in the orientable case mentioned above.

Moreover, Randal-Williams \emph{loc.\,cit.}\ proved homological stability
    with respect to the torus hole stabilization map \cref{eqn:torus_hole_stabilization},
    showing that
    $
        H_d(N\Gamma_{g - 2, 1})
        \to
        H_d(N\Gamma_{g, 1})
    $
    is an isomorphism for $g \geq 3d + 6$.
We prove that slightly outside this range,
    there may not be stability,
    but a secondary stability phenomenon
        relating torus hole stabilization to crosscap stabilization
        occurs.
The crosscap stabilization maps \cref{eqn:crosscap_stabilization} commute
    with the torus hole stabilization maps \cref{eqn:torus_hole_stabilization}
    up to an inner automorphism of $N\Gamma_{g, 1}$.
For each choice of such inner automorphism,
    there is an induced map on the relative homology.
There is a preferred choice of such induced map,
    which we describe in \cref{subsec:secondary_stabilization_map}.

\begin{maintheorem}\label{thm:secondary_stability}
    Let $g \geq 4$.
    If $g$ is odd,
        the secondary stabilization map
        (cf.~\cref{def:map1})
        \[
            H_d(N\Gamma_{g - 1, 1}, N\Gamma_{g - 3, 1})
            \oplus
            H_d(\Gamma_{(g - 1)/2, 1}, \Gamma_{(g - 3)/2, 1})
            \to
            H_d(N\Gamma_{g, 1}, N\Gamma_{g - 2, 1})
        \]
        is
            surjective if $\frac{d}{g} < \frac13$ and
            an isomorphism if $\frac{d + 1}{g} < \frac13$.
    If $g$ is even,
        the same is true for the secondary stabilization map
        \[
            H_d(N\Gamma_{g - 1, 1}, N\Gamma_{g - 3, 1})
            \to
            H_d(N\Gamma_{g, 1}, N\Gamma_{g - 2, 1}).
        \]
\end{maintheorem}

We prove this result using the framework of cellular $E_k$-algebras
    developed by Galatius, Kupers, and Randal-Williams \cite{GKRW19Ek}.
Results like \cref{thm:secondary_stability} about the \q{stability of the failure of stability}
    are called \emph{secondary stability theorems}.
Another result of this kind was obtained
    by Galatius, Kupers, and Randal-Williams in \cite{GKRW19MCG}
    which inspired this paper.
There is a qualitative difference
    between their secondary stability result and ours
    in the fact that our secondary stabilization map does not increase the homological degree.

Combining the injectivity part of \cref{thm:secondary_stability} with
    Harer stability \cite[\thm{B(i)}]{GKRW19MCG} and
    homological stability with respect to the torus hole stabilization maps
        \cite[1.4(i) $+$ 1.4(ii)]{RandalWilliams16},
    by repeatedly applying the secondary stabilization map
        until entering the stable range,
    we obtain the corollary,

\begin{maincorollary}\label{cor:torus_hole_stability}
    Let $g \geq 3$.
    If $\frac{d + 1}{g} \leq \frac13$,
        then
        $
            H_d(N\Gamma_{g, 1}, N\Gamma_{g - 2, 1}) = 0.
        $
\end{maincorollary}

This corollary improves the range of Randal-Williams \emph{loc.\,cit.}
However, it does not extend the range in which the groups $H_d(N\Gamma_{g, 1})$ are stable
    with respect to \cref{eqn:torus_hole_stabilization},
    as Randal-Williams already proves that the maps within his range are isomorphisms.
Rather, the corollary extends his result by showing that right outside his range
    the stabilization maps are surjective,
    as is a typical pattern in algebraic topology.

\begin{figure}
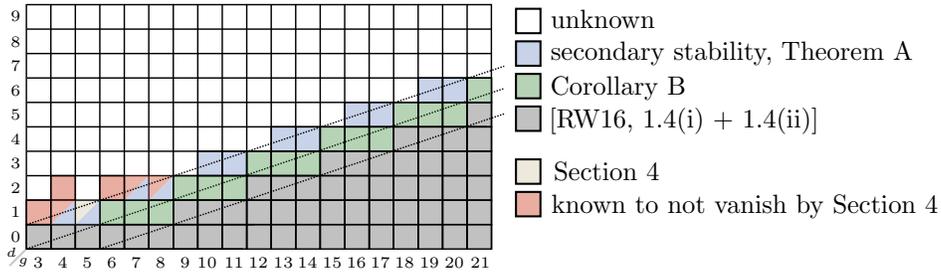

    {
        \tiny
        \incfig[\textwidth]{torus_hole_stability}
    }
    \caption{
        A table showing the known vanishing and secondary stability ranges of
            $
                H_d(N\Gamma_{g, 1}, N\Gamma_{g - 2, 1}),
            $
            for small $g$.
    }
    \label{fig:torus_hole_stability}
\end{figure}

\cref{thm:secondary_stability} admits a sister statement,
    in which the roles of crosscaps and torus holes have been switched:

\begin{maintheorem}\label{thm:secondary_stability_2}
    Let $g \geq 4$ be an even integer.
    Then the secondary stabilization map
        (cf.~\cref{def:map2})
        \[
            H_d(N\Gamma_{g - 2, 1}, N\Gamma_{g - 3, 1})
            \to
            H_d(N\Gamma_{g, 1}, N\Gamma_{g - 1, 1})
        \]
        is
            surjective if $\frac{d}{g} < \frac13$ and
            an isomorphism if $\frac{d + 1}{g} < \frac13$.
\end{maintheorem}

For $g$ odd,
    a more complicated statement may be extracted from \cref{thm:quotient_vanishing}
    (cf.~\cref{rem:g_odd}).
As with \cref{thm:secondary_stability},
    \cref{thm:secondary_stability_2} is superseded by actual stability in most cases:

\begin{figure}
    {
        \tiny
        \incfig[\textwidth]{crosscap_stability}
    }
    \caption{
        A table showing the known vanishing and secondary stability ranges of
            $
                H_d(N\Gamma_{g, 1}, N\Gamma_{g - 1, 1}),
            $
            for small $g$.
    }
    \label{fig:stability_theorems}
\end{figure}

\subsection{The secondary stabilization maps}
\label{subsec:secondary_stabilization_map}

We now describe the secondary stabilization maps.
In the following,
    we shall work with surfaces inside $[0, n] \times I \times \R^\infty$
    that have their boundary in $\del ([0, n] \times I) \times 0$
    and contain a neighborhood
    $
        \del_\epsilon ([0, n] \times I) \times 0 \subset I^2 \times 0
    $
    of this boundary.
Boundary sum $\oplus$ of such surfaces is given by horizontal juxtaposition.
Fix such models
    $
        N_{1, 1}
        \subseteq
        I^2 \times \R^\infty
    $
    and
    $
        S_{1, 1}
        \subseteq
        I^2 \times \R^\infty
    $
    for the Möbius strip and the punctured torus respectively,
    and define
        $N_{g, 1} \defeq N_{1, 1}^{\oplus g} \subseteq [0, g] \times I \times \R^\infty$ and
        $S_{h, 1} \defeq S_{1, 1}^{\oplus h} \subseteq [0, h] \times I \times \R^\infty$.
For a surface $S \subseteq I^2 \times \R^\infty$
    again subject to the aforementioned conditions,
    the square
    \begin{equation}\label{dia:stabilization_up_to_braiding}
        \begin{tikzcd}[column sep = large]
            \Gamma(S)
            \ar[rr, "\oplus \id_{N_{1, 1}}"]
            \ar[d, "\oplus \id_{S_{1, 1}}"]
                &
                    & \Gamma(S \oplus N_{1, 1})
                    \ar[d, "\oplus \id_{S_{1, 1}}"]
            \\
            \Gamma(S \oplus S_{1, 1})
            \ar[r, "\oplus \id_{N_{1, 1}}"]
                & \Gamma(S \oplus S_{1, 1} \oplus N_{1, 1})
                \ar[r, "\Gamma(\id_S \oplus \beta)"]
                    & \Gamma(S \oplus N_{1, 1} \oplus S_{1, 1})
        \end{tikzcd}
    \end{equation}
    commutes,
    where
    $
        \beta :
            N_{1, 1} \oplus S_{1, 1}
            \to
            S_{1, 1} \oplus N_{1, 1}
    $
    is the clockwise half Dehn twist (see \cref{fig:elementary_braid}),
    which is well-defined up to isotopy,
    and $\Gamma(\id_S \oplus \beta)$ means conjugation by $\id_S \oplus \beta$.

For each homeomorphism
    $
        S_{h, 1} \oplus N_{1, 1}
        \cong
        N_{2h + 1, 1},
    $
    we get an isomorphism of pairs of groups
    \begin{equation}\label{eqn:orientable_nonorientable_sum_identification}
        (\Gamma(S_{h, 1} \oplus N_{1, 1} \oplus S_{1, 1}), \Gamma(S_{h, 1} \oplus N_{1, 1}))
        \cong
        (\Gamma(N_{2h + 1, 1} \oplus S_{1, 1}), \Gamma(N_{2h + 1, 1})).
    \end{equation}
Counting the choice of the homeomorphism,
    this isomorphism is ambiguous only
        up to conjugation by an element in $\Gamma(N_{2h + 1, 1})$.
Consequently, there is a well-defined isomorphism
    \begin{equation}\label{eqn:orientable_secondary_iso}
        H_d(\Gamma(S_{h, 1} \oplus N_{1, 1} \oplus S_{1, 1}), \Gamma(S_{h, 1} \oplus N_{1, 1}))
        \cong
        H_d(\Gamma(N_{2h + 1, 1} \oplus S_{1, 1}), \Gamma(N_{2h + 1, 1})).
    \end{equation}
Write
    \begin{align*}
        H_d(N\Gamma_{g, 1}, N\Gamma_{g - 2, 1})
        &\defeq
        H_d(\Gamma(N_{g - 2, 1} \oplus S_{1, 1}), \Gamma(N_{g - 2, 1}))),
        \\
        H_d(\Gamma_{h, 1}, \Gamma_{h - 1, 1})
        &\defeq
        H_d(\Gamma(S_{h - 1, 1} \oplus S_{1, 1}), \Gamma(S_{h - 1, 1}))).
    \end{align*}

\begin{definition}\label{def:map1}
    The squares \cref{dia:stabilization_up_to_braiding} induce
        maps on relative group homology
        \[
            H_d(\Gamma(N_{g, 1} \oplus S_{1, 1}), \Gamma(N_{g, 1})))
            \to
            H_d(\Gamma(N_{g, 1} \oplus N_{1, 1} \oplus S_{1, 1}), \Gamma(N_{g, 1} \oplus N_{1, 1}))
        \]
        and, using the identification \cref{eqn:orientable_secondary_iso},
        \[
            H_d(\Gamma(S_{h, 1} \oplus S_{1, 1}), \Gamma(S_{h, 1})))
            \to
            H_d(\Gamma(N_{2h + 1, 1} \oplus S_{1, 1}), \Gamma(N_{2h + 1, 1})).
        \]
    Let $g \geq 4$.
    Then these maps induce the \emph{secondary stabilization map}
        for \cref{thm:secondary_stability},
        \[
            H_d(N\Gamma_{g - 1, 1}, N\Gamma_{g - 3, 1})
            \oplus
            H_d(\Gamma_{(g - 1)/2, 1}, \Gamma_{(g - 3)/2, 1})
            \to
            H_d(N\Gamma_{g, 1}, N\Gamma_{g - 2, 1}),
        \]
        when $g$ is odd, and
        \[
            H_d(N\Gamma_{g - 1, 1}, N\Gamma_{g - 3, 1})
            \to
            H_d(N\Gamma_{g, 1}, N\Gamma_{g - 2, 1})
        \]
        when $g$ is even.
\end{definition}

We now describe the map in \cref{thm:secondary_stability_2}.
For $g \geq 4$,
    the square
    \begin{equation}\label{dia:stabilization_up_to_braiding_2}
        \scalemath{0.86}{
        \begin{tikzcd}[column sep = large, ampersand replacement = \&]
            \Gamma(N_{g - 3, 1})
            \ar[rr, "\oplus \id_{S_{1, 1}}"]
            \ar[d, "\oplus \id_{N_{1, 1}}"]
                \&
                    \& \Gamma(N_{g - 3, 1} \oplus S_{1, 1})
                    \ar[d, "\oplus \id_{N_{1, 1}}"]
            \\
            \Gamma(N_{g - 3, 1} \oplus N_{1, 1})
            \ar[r, "\oplus \id_{S_{1, 1}}"]
                \& \Gamma(N_{g - 3, 1} \oplus N_{1, 1} \oplus S_{1, 1})
                \ar[r, "\Gamma(\id_S \oplus \beta^{-1})"]
                    \& \Gamma(N_{g - 3, 1} \oplus S_{1, 1} \oplus N_{1, 1})
        \end{tikzcd}
        }
    \end{equation}
    commutes by naturality of the braiding.
Analogously to \cref{eqn:orientable_secondary_iso},
    there are well-defined isomorphisms
    \begin{equation}\label{eqn:absorb_torus_hole}
        H_d(\Gamma(N_{g, 1} \oplus S_{1, 1} \oplus N_{1, 1}), \Gamma(N_{g, 1} \oplus S_{1, 1}))
        \cong
        H_d(N\Gamma_{g + 3, 1}, N\Gamma_{g + 2, 1}).
    \end{equation}

\begin{definition}\label{def:map2}
    Let $g \geq 4$ be even.
    The \emph{secondary stabilization map} for \cref{thm:secondary_stability_2}
        is the map
        \[
            H_d(N\Gamma_{g - 2, 1}, N\Gamma_{g - 3, 1})
            \to
            H_d(N\Gamma_{g, 1}, N\Gamma_{g - 1, 1}).
        \]
        induced by the squares \cref{dia:stabilization_up_to_braiding}
        using the identifications \cref{eqn:absorb_torus_hole}.
\end{definition}

\subsection{Organization of the paper}

The paper is structured as follows.
\begin{itemize}[(i)]
    \item[(1)]
    In \cref{sec:e2algebra},
        we define an $E_2$-algebra $R$ of mapping classes
        and prove that its $E_2$-homology vanishes in a range.
    In the process,
        we also prove that the complex of separating arcs on a surface is highly-connected,
        which may be of independent interest.
    \item[(2)]
    In \cref{sec:map},
        we construct a homotopy theoretic refinement of
            the secondary stabilization map of \cref{subsec:secondary_stabilization_map},
        the relative homology of which measures the failure of secondary stability.
    \item[(3)]
    In \cref{sec:calculation},
        we review calculations of the homology of mapping class groups of nonorientable surfaces
            due to Stukow, Paris, Szepietowski, and others.
    \item[(4)]
    In \cref{sec:secondary_stability},
        we construct a \q{presentation} of $R$
        and prove that it induces isomorphisms on the $E_2$-homology in a range of degrees.
    Using this, we leverage the Galatius--Kupers--Randal-Williams theory
        to prove our results.
\end{itemize}

\subsection{Acknowledgements}
The content of this paper was mostly extracted from my master's thesis.
I express my gratitude to my advisor, Professor Søren Galatius.
As pertains to this project,
    I am particularly grateful for his suggestion
        that I should consider the $E_2$-algebra of all surfaces,
    as opposed to restricting attention to only nonorientable surfaces.

I want to thank Luis Paris and B{\l}a{\.{z}}ej Szepietowski
    for thoroughly responding to my questions about their paper \cite{PS15}.

Finally, I want to thank
    Alexander Kupers for his comments and corrections
    and for advising me during my stay in Toronto.



\section{The $E_2$-algebra of surface configurations}
\label{sec:e2algebra}

In this section,
    we study an $E_2$-algebra of mapping class groups of surfaces,
    culminating in a vanishing range for its $E_2$-homology.
For $0 < \epsilon < \frac12$,
    let $\del_\epsilon I^2 \subseteq I^2$ denote the open $\epsilon$-collar neighborhood,
    that is, the subset of points that have $< \epsilon$ Euclidean distance to a point in $\del I^2$.

\begin{definition}\label{def:mcg_cat}
    The braided strict monoidal groupoid $(\MCG, \oplus, (I^2 \times 0, 0), \beta)$ has
        \begin{alignat*}{3}
            \Ob(\MCG)
            \defeq{}&
            \{
                (C \subseteq I^2 \times \R^\infty, r \in \R_{>0})
                \mid{}&&
                \text{$C \cong S_{g, 1}$ or $N_{g, 1}$ for some $g > 1$,}
                \\
                &&&
                \del C = \del I^2 \times 0,
                \\
                &&&
                C - \del C \subset (0, 1)^2 \times \R^\infty,
                \\
                &&&
                \exists 0 < \epsilon < 1/2 :
                    \del_\epsilon I^2 \times 0
                    \subseteq C
            \}
            \\
            {}\cup{}&
            \{(I^2 \times 0, 0)\}
        \end{alignat*}
        as set of objects,
        and
        \begin{align*}
            \MCG((C_1, t_1), (C_2, t_2))
            &\defeq
            \frac{
                \{
                    f : C_1 \tox{\text{homeo}} C_2 \mid
                    f|\del I^2 \times 0 = \id
                \}
            }{
                \text{isotopy rel.~$\del I^2 \times 0$}
            }.
        \end{align*}
        as morphisms.
    In particular,
        $
            \Aut_\MCG((C, t)) = \Gamma(C).
        $
    The monoidal product $\oplus$ is the strictly associative boundary sum,
        given by horizontal scaling and horizontal juxtaposition
        \[
            (C_1, t_1) \oplus (C_2, t_2)
            \defeq
            \begin{cases}
                \(
                    \frac{t_1}{t_1 + t_2} \cdot C_1
                    \cup
                    \(\frac{t_1}{t_1 + t_2} + \frac{t_2}{t_1 + t_2} \cdot C_2\),
                    t_1 + t_2
                \),
                    & t_1 + t_2 > 0,
                \\
                (I^2 \times 0, 0),
                    & t_1 = t_2 = 0.
            \end{cases}
        \]
    \begin{figure}
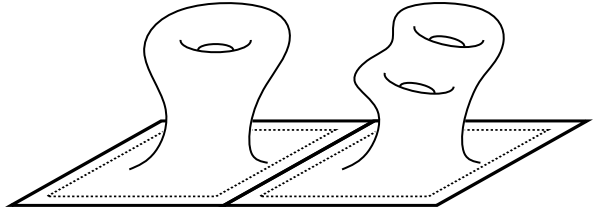

        \incfig{juxtaposition}
        \caption{
            $(C_1, 1) \oplus (C_2, 1)$ for $C_1 \cong S_{1, 1}$ and $C_2 \cong S_{2, 1}$,
                where the $t$-values are visualized as width.
        }
    \end{figure}
    \noindent
    The braiding
        $
            \beta :
                (C_1, t_1) \oplus (C_2, t_2)
                \to
                (C_2, t_2) \oplus (C_1, t_1)
        $
        is the clockwise half Dehn twist.
    Up to a shrinking of the surfaces (expanding the flat collar inwards),
        this is the mapping class given by gluing $\id_{C_1}, \id_{C_2}$
        to the clockwise elementary braid $e$ of $S_{0, 3}$
            that swaps these boundaries;
        see \cref{fig:elementary_braid}.
    \begin{figure}[h]
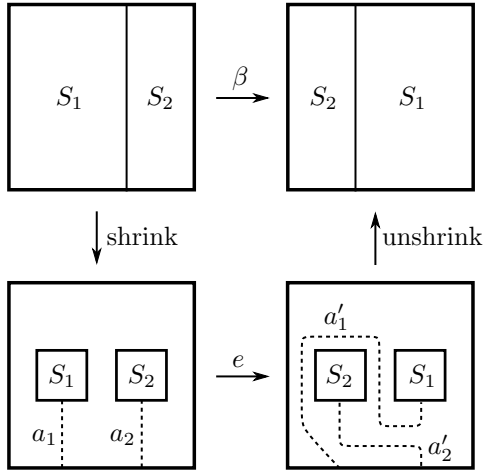

        \incfig{elementary_braid}
        \caption{
            Cutting out the surfaces $S_1$ and $S_2$,
                $e$ is the unique mapping class of a diffeomorphism of $S_{0, 3}$
                fixing the outer boundary,
                identically sending the left-hand inner boundary
                    to the right-hand inner boundary, and
                taking the isotopy class of the arc $a_1$ (respectively $a_2$)
                    to the isotopy class of the arc $a_1'$ (respectively $a_2'$).
        }
        \label{fig:elementary_braid}
    \end{figure}
\end{definition}

\begin{remark}\label{rem:big_cat}
    It is tempting to define $\Ob(\MCG)$ to be
        the set of diffeomorphism types of connected surfaces with one boundary component,
        so that each isomorphism class contains exactly one object.
    However, doing so, it becomes difficult, if not impossible,
        to extend the monoid structure of $\Ob(\MCG)$
        to a monoidal structure on $\MCG$.
    This has to do with the mixture of orientable and nonorientable surfaces.
    While there exist diffeomorphisms
        $
            N_{1, 1}^{\oplus g} \oplus S_{1, 1}^{\oplus h} \cong N_{1, 1}^{\oplus g + 2h},
        $
        there are no natural, canonical choices of such,
        and so it is unclear how one would coherently define the mapping class
        $
            f \oplus g \in \Gamma(N_{1, 1}^{\oplus g + 2h})
        $
        given
        $
            f \in \Gamma(N_{1, 1}^{\oplus g})
        $,
        $
            g \in \Gamma(S_{1, 1}^{\oplus h}).
        $
    On the other hand,
        if we considered only orientable or only nonorientable surfaces,
        it is easy to define a monoidal structure of boundary sums
            on the groupoid $\MCG'$
            whose objects
            $
                \Ob(\MCG') \defeq \{0, 1, 2, \ldots\}
            $
            are diffeomorphism types
                represented by genera.
\end{remark}

By a unital (respectively nonunital) $E_1$-algebra,
    we mean an algebra over the operad $S$ defined in \cref{sec:classifying}
        (respectively its nonunital version where $S(0)$ is replaced by $\emptyset$).
The corresponding free monad is denoted by $E_1^+$ (respectively $E_1$).
By a unital (respectively nonunital) $E_2$-algebra,
    we mean an algebra over the operad $B$ defined in \cref{sec:classifying}
        (respectively its nonunital version where $B(0)$ s replaced by $\emptyset$).
The corresponding free monad is denoted by $E_2^+$ (respectively $E_2$).
Note that these definitions differ from \cite{GKRW19Ek}
    which prefers the little $k$-cubes operads (see \cite[Definition 12.1]{GKRW19Ek}),
    but their formalism is equivalent to ours by \cref{prop:e2_models}.

\begin{definition}
    The unital $E_2$-algebra $R'$ in graded simplicial sets $\SSet^{\N_0}$ is
        the classifying space of
            the braided strict monoidal groupoid $(\MCG, \oplus, (I^2 \times 0, 0), \beta)$
        as constructed in \cref{sec:classifying} graded (see \cref{rem:graded_classifying})
            via the grading
            \begin{align*}
                h : \MCG &\to \N_0,
                \\
                (S, t) &\mapsto \begin{cases}
                    2g, & \text{if $S \cong S_{g, 1}$},
                    \\
                    g, & \text{if $S \cong N_{g, 1}$}.
                \end{cases}
            \end{align*}
    The nonunital $E_2$-algebra $R \in \Alg_{E_2}(\SSet^{\N_0})$ is obtained from $R'$ by
        restricting the $E_2^+$-algebra structure to an $E_2$-algebra structure and
        replacing $R'(0)$ with $\emptyset$.
\end{definition}

We write $H_{n, d}(R) \defeq H_d(R(n))$.

\begin{lemma}
    \[
        H_{n, d}(R)
        \cong
        \begin{cases}
            0,
                & \text{if $n = 0$},
            \\
            H_d(\Gamma_{n/2, 1}) \oplus H_d(N\Gamma_{n, 1}),
                & \text{if $n \geq 1$ and $n$ is even},
            \\
            H_d(N\Gamma_{n, 1}),
                & \text{if $n \geq 1$ and $n$ is odd}.
        \end{cases}
    \]
\end{lemma}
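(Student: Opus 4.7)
The plan is to reduce the claim to a standard decomposition of the classifying space of a groupoid and then use the classification of surfaces with boundary. By the construction in \cref{sec:classifying} (see \cref{rem:graded_classifying}), the simplicial set $R'(n)$ is, at the level of underlying homotopy type, the classifying space of the full subgroupoid $\MCG_n \subseteq \MCG$ on objects of height $h = n$. Passing from $R'$ to $R$ only changes the value at $n = 0$, replacing $R'(0)$ with $\emptyset$. Thus for $n = 0$ we immediately get $H_{0, d}(R) = 0$, and for $n \geq 1$ we have $H_{n, d}(R) \cong H_d(B\MCG_n)$.

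The classifying space of a groupoid decomposes as a disjoint union of classifying spaces of automorphism groups, one for each isomorphism class. I would then identify the isomorphism classes of $\MCG_n$. Two objects $(C_1, t_1), (C_2, t_2)$ are isomorphic in $\MCG$ if and only if there is a boundary-fixing homeomorphism $C_1 \cong C_2$; by the classification of compact surfaces with boundary, this happens if and only if $C_1$ and $C_2$ are of the same topological type $S_{g, 1}$ or $N_{g, 1}$. Combined with the grading $h(S_{g, 1}) = 2g$ and $h(N_{g, 1}) = g$, this gives exactly one isomorphism class (represented by $N_{n, 1}$) when $n$ is odd, and exactly two isomorphism classes (represented by $N_{n, 1}$ and $S_{n/2, 1}$) when $n$ is even and $n \geq 2$.

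Finally, by definition of $\MCG$, the automorphism group of $(C, t)$ is $\Gamma(C)$, giving $N\Gamma_{n, 1}$ at a nonorientable representative and $\Gamma_{n/2, 1}$ at an orientable representative. Assembling these components yields
\[
    H_d(B\MCG_n)
    \cong
    H_d(N\Gamma_{n, 1})
    \oplus
    \begin{cases} H_d(\Gamma_{n/2, 1}), & n \text{ even}, \\ 0, & n \text{ odd}, \end{cases}
\]
which matches the claim. There is no substantive obstacle here; the only bookkeeping required is to verify that the classifying-space construction of \cref{sec:classifying} really does decompose into components of the groupoid in the expected way, and that the height grading is read off correctly. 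Both are immediate from the definitions.
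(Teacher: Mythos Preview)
Your argument is correct and is essentially the same as the paper's: the paper records only the single sentence ``This is because $\Aut_\MCG((C, t)) = \Gamma(C)$,'' leaving the groupoid decomposition and the enumeration of isomorphism classes implicit, whereas you spell these out explicitly.
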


\begin{proof}
    This is because
        $
            \Aut_\MCG((C, t)) = \Gamma(C).
        $
\end{proof}

\subsection{Identification of the $E_1$-splitting complex}

The $E_2$-structure on $R$ forgets down to an $E_1$-structure.
Our next objective is to prove a vanishing range for the $E_1$-homology of $R$
    (see \cite[10.1.6]{GKRW19Ek}).

One of the key features of the Galatius--Kupers--Randal-Williams theory is
    the \emph{$E_1$-splitting complex} $S^{E_1}(g)$,
    which is a certain semisimplicial set associated to
        a monoidal groupoid $\mathcal G$ and
        an object $g \in \mathcal G$.
They prove that
    connectivity estimates for $S^{E_1}(g)$ for various $g \in \mathcal G$
    imply a vanishing estimate for the $E_1$-homology
        of an $E_1$-algebra associated to $\mathcal G$ (as in \cref{def:alg_model}).
Such an estimate, in turn, can often be \q{transferred up}
    to a vanishing estimate for the $E_2$-homology.
We refer the reader to \cite[17.2 and 14.2]{GKRW19Ek}
    for generalities on these constructions.

We will often notationally suppress
    length data when denoting objects of $\MCG$.

\begin{lemma}\label{lem:concat_inj}
    For $S_1, S_2 \in \MCG$,
        the map
        $
            \blank \oplus \blank :
                \Gamma(S_1) \times \Gamma(S_2)
                \to
                \Gamma(S_1 \oplus S_2)
        $
        is injective.
\end{lemma}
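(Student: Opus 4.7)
The plan is to reduce injectivity to a classical contractibility statement about the isotopy class of a properly embedded arc, using the seam arc of the boundary sum. Let $\alpha \subseteq S_1 \oplus S_2$ denote the vertical arc along which $S_1$ and $S_2$ are glued: it runs between two points of $\del I^2 \times 0$ and separates $S_1 \oplus S_2$ into pieces canonically homeomorphic to $S_1$ and $S_2$. The representative $f_1 \oplus f_2$ of the image mapping class is the identity on $\alpha$, since each $f_i$ fixes $\del I^2 \times 0$ pointwise.

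I would implement the reduction through the restriction fibration
\[
    r : \Homeo^\del(S_1 \oplus S_2)
        \longrightarrow
        \operatorname{Emb}^{\del \alpha}(\alpha, S_1 \oplus S_2),
    \qquad
    f \longmapsto f|_\alpha,
\]
whose target consists of embeddings of $\alpha$ restricting to the identity on $\del \alpha$. By parametrized isotopy extension, this is a Serre fibration onto the connected component $\operatorname{Emb}^{\del \alpha}_{[\alpha]}$ of $\id_\alpha$. The fiber over $\id_\alpha$ consists of homeomorphisms of $S_1 \oplus S_2$ fixing $\del I^2 \times 0$ and $\alpha$ pointwise; cutting along $\alpha$ identifies this fiber canonically with $\Homeo^\del(S_1) \times \Homeo^\del(S_2)$, and the inclusion of the fiber realises the map $\blank \oplus \blank$ on $\pi_0$. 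The long exact sequence of the fibration thus yields the exact sequence
\[
    \pi_1\bigl(\operatorname{Emb}^{\del \alpha}_{[\alpha]}, \id_\alpha\bigr)
    \longrightarrow
    \Gamma(S_1) \times \Gamma(S_2)
    \xrightarrow{\blank \oplus \blank}
    \Gamma(S_1 \oplus S_2),
\]
reducing the desired injectivity to the simple-connectedness of $\operatorname{Emb}^{\del \alpha}_{[\alpha]}$.

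The main obstacle is establishing this simple-connectedness, which is really the classical contractibility of the isotopy class of a properly embedded arc in a compact surface with boundary, orientable or not. I would either cite this result from Gramain's thesis (transported to the topological category via Hatcher-type linearizations) or give a direct argument: since every non-unit object of $\MCG$ has negative Euler characteristic, the interior of $S_1 \oplus S_2$ admits a complete hyperbolic structure, and every arc in the isotopy class of $\alpha$ admits a canonical geodesic straightening relative to a fixed collar of the endpoints. The remaining parametrization data lies in the space of self-homeomorphisms of $I$ fixing $\del I$, which is contractible, exhibiting $\operatorname{Emb}^{\del \alpha}_{[\alpha]}$ as contractible. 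The degenerate case in which $S_1$ or $S_2$ is the unit object is immediate, since then the corresponding factor of $\Gamma$ is trivial.
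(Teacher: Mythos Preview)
Your approach is essentially the paper's, which in one sentence identifies $\blank \oplus \blank$ with the inclusion of the stabilizer of the isotopy class of the seam arc via the isotopy extension theorem; you have unpacked that invocation into the restriction fibration and the contractibility of the arc's component in the embedding space. One small slip: the M\"obius strip $N_{1,1}$ has Euler characteristic $0$, so not every non-unit object of $\MCG$ has $\chi < 0$, but since $S_1 \oplus S_2$ with both factors non-unit has $h \geq 2$ and hence $\chi \leq -1$, your hyperbolic-straightening step is unaffected.
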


\begin{proof}
    Let $[a]$ be the isotopy class of the arc $a$ on $S_1 \oplus S_2$
        placed where the gluing occured.
    An application of the isotopy extension theorem shows
        that the stabilizer of $[a]$ under the action of $\Gamma(S)$ is $\Gamma(S - a)$.
    Therefore, $\blank \oplus \blank$ identifies with the inclusion
        $
            \St([a]) \incl \Gamma(S_1 \oplus S_2).
        $
\end{proof}

Associated to the monoidal category $(\MCG, \oplus)$
    are the $E_1$-splitting complexes
    $
        S^{E_1}(S) \in \SSSet
    $
    with
    \[
        S^{E_1}(S)_p
        \defeq
        \colim_{
            (S_0, \ldots, S_{p + 1}) \in \MCG^{p + 2}_{\neq U}
        }
        \MCG(S_0 \oplus \cdots \oplus S_{p + 1}, S)
    \]
    and the face map $d_i$ given by changing the subscript
    \[
        (S_0, \ldots, S_{p + 1})
        \mapsto
        (S_0, \ldots, S_i \oplus S_{i + 1}, \ldots, S_{p + 1}).
    \]
\cite[Assumption 17.1]{GKRW19Ek} is satisfied by definition, and
\cite[Assumption 17.2]{GKRW19Ek} is satisfied by our \cref{lem:concat_inj}.

\begin{definition}\label{def:separating_arcs}
    Let $S$ be an orientable or nonorientable, connected surface with nonempty boundary, and
    let $b_0, b_1$ be distinct points in $\del S$ on the same boundary component.
    The \emph{complex of separating arcs} $\mathscr D(S, b_0, b_1)$ is the simplicial complex
        whose $p$-simplices are collections of $p + 1$ distinct isotopy classes
            of arcs between $b_0, b_1$
            that admit representatives
            $
                a_0, \ldots, a_p
            $
            such that
            \begin{enumerate}
                \item
                for each $i \neq j$, $a_i \cap a_j = \{b_0, b_1\}$ and
                \item\label{def:separating_arcs:b}
                for each $i$, $S - a_i$ consists of two components,
                    none of which are diffeomorphic to $S_{0, 1}$.
            \end{enumerate}
\end{definition}

The complex of separating arcs is spiritually related to
    the complex of separating curves treated by Looijenga \cite{Looijenga13}.

\begin{figure}[h]
    \incfig{separating_1_simplex}
    \caption{
        A $1$-simplex
            $
                \<x, y\> \in \mathscr D(N_{4, 1}, b_0, b_1)_1.
            $
    }
\end{figure}

We shall temporarily abuse notation and write
    $b_0$ for the top left corner of the boundary and
    $b_1$ for the bottom left corner of the boundary
        for any object in $\MCG$.
Each monoidal product $S_0 \oplus \cdots \oplus S_{p + 1}$ admits
    a natural system
    $
        \sigma_{S_0 \oplus \cdots \oplus S_{p + 1}}
        \in \mathscr D(S_0 \oplus \cdots \oplus S_{p + 1}, b_0, b_1)_p
    $
    of separating arcs $a_0, \ldots, a_p$
    such that each $a_i$ is isotopic
        (allowing the endpoints to move along the horizontal boundary segments)
        to the arc where the $i$'th gluing occured.
Let $S \in \MCG$.
Endow $\mathscr D(S, b_0, b_1)$ with the structure of a semisimplicial set
    by ordering arcs from left to right at $b_0$.
For each $p \geq 0$ and product $S_0 \oplus \cdots \oplus S_{p + 1}$,
    there is a map
    \[
        \MCG(S_0 \oplus \cdots \oplus S_{p + 1}, S)
        \to
        \mathscr D(S, b_0, b_1)_p
    \]
    given by taking the image of $\sigma_{S_0 \oplus \cdots \oplus S_{p + 1}}$
        under diffeomorphisms.

\begin{lemma}\label{lem:splitting_cpx_id}
    The maps described above assemble into an isomorphism of semisimplicial sets
        \[
            \phi : S^{E_1}(S) \tox{\cong} \mathscr D(S, b_0, b_1).
        \]
\end{lemma}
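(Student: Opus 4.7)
The plan is to verify that $\phi$ is well-defined on the colimit, commutes with face maps, and is bijective on $p$-simplices for each $p$. For well-definedness, I check that the assignment $(S_0, \ldots, S_{p+1}, f) \mapsto f_*(\sigma_{S_0 \oplus \cdots \oplus S_{p+1}})$ factors through the colimit structure: given isomorphisms $\phi_i : S_i' \to S_i$ in $\MCG$, the boundary sum $\phi_0 \oplus \cdots \oplus \phi_{p+1}$ carries $\sigma_{S_0' \oplus \cdots \oplus S_{p+1}'}$ to $\sigma_{S_0 \oplus \cdots \oplus S_{p+1}}$, because each $\phi_i$ fixes $\del I^2 \times 0$ pointwise and the canonical arcs sit precisely at the gluing loci. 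Compatibility with face maps is immediate, since merging $S_i \oplus S_{i+1}$ on the source side removes the $i$-th canonical arc, matching the face map $d_i$ on $\mathscr D(S, b_0, b_1)$.

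For surjectivity, I start from a $p$-simplex $\{[a_0], \ldots, [a_p]\} \in \mathscr D(S, b_0, b_1)_p$, ordered left-to-right at $b_0$, with disjoint representatives meeting only at $b_0, b_1$. Inductively cutting along the leftmost arc and observing that each subsequent arc still separates its piece shows that cutting $S$ along $a_0 \cup \cdots \cup a_p$ yields exactly $p+2$ connected surfaces $T_0, \ldots, T_{p+1}$, each with a single boundary component and none a disk by condition (\ref{def:separating_arcs:b}) of \cref{def:separating_arcs}. Classifying each $T_i$ up to homeomorphism, I pick $S_i \in \MCG$ with $S_i \cong T_i$ and, after selecting compatible parametrizations of the horizontal boundary segments, assemble the pieces into a boundary-fixing homeomorphism $f : S_0 \oplus \cdots \oplus S_{p+1} \to S$ with $\phi(f) = \{[a_0], \ldots, [a_p]\}$.

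For injectivity, suppose representatives $f : S_0 \oplus \cdots \oplus S_{p+1} \to S$ and $g : T_0 \oplus \cdots \oplus T_{p+1} \to S$ satisfy $\phi(f) = \phi(g)$. Then some representative of $g^{-1} \circ f$ sends the canonical arcs of its source literally to those of its target; cutting along them produces piecewise homeomorphisms $\psi_i : S_i \to T_i$ in $\MCG$ with $\psi_0 \oplus \cdots \oplus \psi_{p+1} = g^{-1} \circ f$, so $f = g \circ (\psi_0 \oplus \cdots \oplus \psi_{p+1})$, exhibiting $[f] = [g]$ in the colimit. Iterating \cref{lem:concat_inj} provides a complementary perspective: the stabilizer of $\sigma_T$ in $\Gamma(T_0 \oplus \cdots \oplus T_{p+1})$ is exactly the image of $\Gamma(T_0) \times \cdots \times \Gamma(T_{p+1})$ under boundary sum. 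The main obstacle will be the bookkeeping in surjectivity, namely choosing embedded models and boundary parametrizations so that the assembled $f$ lies genuinely in $\MCG$ and recovers the prescribed arcs on the nose; this is routine but delicate cut-and-paste that must be set up carefully to produce a morphism in the colimit.
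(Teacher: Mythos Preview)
Your proposal is correct and follows essentially the same approach as the paper: establish that $\phi$ is a semisimplicial map, prove surjectivity by cutting along a given arc system and reassembling the pieces into a morphism in the colimit, and prove injectivity by showing that two representatives with the same image differ by a tuple of piecewise isomorphisms $\psi_0 \oplus \cdots \oplus \psi_{p+1}$, hence are identified in the colimit. The paper's treatment is terser (it omits the well-definedness and face-map checks you spell out, and phrases injectivity directly in terms of $\alpha$ and $\beta$ rather than via $g^{-1}\circ f$), but the substance is the same.
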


\begin{proof}
    Clearly, the maps assemble into a map of semisimplicial sets $\phi$.
    A simplex $\sigma \in \mathscr D(S, b_0, b_1)_p$
        cuts $S$ into surfaces $S_0, \ldots, S_{p + 1}$,
        ordered from left to right.
    The diffeomorphism $S_0 \oplus \cdots \oplus S_{p + 1} \to S$,
        mending $S$ back together,
        determines an element $\tilde\sigma$ of $S^{E_1}(S)_p$
        such that $\phi(\tilde\sigma) = \sigma$.
    This shows surjectivity.
    For injectivity,
        suppose that $\phi(\overline \alpha) = \phi(\overline \beta)$
        for
        $
            \alpha \in \MCG(S_0 \oplus \cdots \oplus S_{p + 1}, S)
        $,
        $
            \beta \in \MCG(S_0' \oplus \cdots \oplus S_{p + 1}', S).
        $
    Since $\alpha$ and $\beta$ must preserve the ordering at $b_0$ and $b_1$, and
        since
        $
            \alpha(\sigma_{S_0 \oplus \cdots \oplus S_{p + 1}})
            =
            \beta(\sigma_{S_0' \oplus \cdots \oplus S_{p + 1}'})
        $,
        $\alpha$ and $\beta$ differ at most up to
            precomposition by diffeomorphisms of each summand,
        hence $\overline \alpha = \overline \beta$.
\end{proof}

\subsection{Auxiliary complexes}
\label{subsec:auxiliary}

Before we can execute our connectivity arguments,
    we must define some auxiliary simplicial complexes
    and review their connectivity estimates.
By convention,
    \q{$(-1)$-connected} means nonempty.

\begin{definition}[{\cite[\defn{3.1}]{Wahl07}}]
    Let $S$ be an orientable or nonorientable, connected surface with nonempty boundary, and
    let $b_0, b_1$ be distinct points in $\del S$.
    $BX(S, b_0, b_1)$ is the simplicial complex
        whose $p$-simplices are collections of $p + 1$ distinct isotopy classes
            of arcs between $b_0, b_1$
            that admit representatives
            $
                a_0, \ldots, a_p
            $
            such that
            \begin{enumerate}
                \item
                $a_i \cap a_j = \{b_0, b_1\}$ for each $i \neq j$ and
                \item
                $S - (a_0 \cup \cdots \cup a_p)$ is connected.
            \end{enumerate}
\end{definition}

For convenience,
    set
    \begin{align*}
        h(N_{g, r}) &\defeq g,
        \\
        h(S_{g, r}) &\defeq 2g.
    \end{align*}
This notation is compatible with the functor $h$ considered earlier.

\begin{theorem}[{\cite[\thm{3.2}]{Wahl07}}]\label{thm:bx_high_conn}
    Let $S$ be an orientable or nonorientable, connected surface with nonempty boundary,
    let $b_0, b_1$ be distinct points in $\del S$, and
    let $i$ be the number of boundary components of $S$
        that intersect $\{b_0, b_1\}$.
    Then $BX(S, b_0, b_1)$ is $(h(S) + i - 3)$-connected.
\end{theorem}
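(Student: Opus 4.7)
My plan is to induct on $h(S) + i$, following the standard arc-complex connectivity machinery. The base case $h(S) + i = 2$ requires only that $BX(S, b_0, b_1)$ be nonempty: when $i = 2$, any arc joining the two distinct boundary components works, and when $i = 1$ with $h(S) \geq 1$, a non-separating arc between $b_0$ and $b_1$ exists by the positive genus hypothesis (for instance, a one-sided core arc in the Möbius band when $S = N_{1, 1}$).

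For the inductive step, the geometric key is to identify the link of a vertex $[a] \in BX(S, b_0, b_1)$ with a $BX$ complex on a simpler surface. Cutting $S$ along a representative $a$ produces a surface $S'$ such that $\{b_0, b_1\}$ meets $i'$ boundary components of $S'$ and $h(S') + i' = h(S) + i - 1$ in each of the orientable/nonorientable and one-sided/two-sided/component-joining cases; a quick case check shows that although $h$ drops by one or by two depending on the type of arc, the drop in $h$ is always balanced by the change in $i$. The simplices of the link correspond bijectively to arcs on $S$ disjoint from $a$, hence to arcs on $S'$, giving an isomorphism of simplicial complexes $\mathrm{Lk}([a]) \cong BX(S', b_0', b_1')$ for appropriate basepoints on the new boundary of $S'$. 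By induction, the link is then $(h(S) + i - 4)$-connected.

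From this link estimate I would run the Hatcher--Wahl bad-simplex argument. Given $\phi : S^k \to BX(S, b_0, b_1)$ with $k \leq h(S) + i - 3$, simplicially approximate on a triangulation, fix a reference non-separating arc $a^*$, and place all vertex-arcs in minimal position against $a^*$. Each bad vertex, meaning one whose arc intersects $a^*$ transversally, can be surgered against a subarc of $a^*$ to strictly decrease its geometric intersection number, with the link-connectivity estimate supplying the extension of $\phi$ across the resulting new disk of the triangulation. Iterating drives $\phi$ into the star of $[a^*]$, which is contractible, so $\phi$ is nullhomotopic. The main obstacle I anticipate is the case analysis distinguishing one-sided from two-sided arcs in the nonorientable setting, together with verifying that the identification of links with $BX$ on $S'$ genuinely preserves the connectivity-of-complement condition under cutting and pasting; however, the combination $h + i$ drops by exactly one in every subcase, so the induction is uniform, which is precisely what the bound $h(S) + i - 3$ has been calibrated for.
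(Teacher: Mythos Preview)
The paper does not give its own proof of this statement: it is quoted verbatim from Wahl \cite[Theorem~3.2]{Wahl07} and used as a black box input to the connectivity argument for the complex of separating arcs. There is therefore nothing in the present paper to compare your argument against.

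That said, your sketch is in the spirit of Wahl's original proof, which proceeds by induction together with a surgery/bad-simplex argument on arcs. One point to be careful about if you carry this out: the link of a vertex $[a]$ in $BX(S,b_0,b_1)$ is not quite $BX(S',b_0',b_1')$ for the cut surface $S'$ with the \emph{same} marked points. Cutting along $a$ doubles the endpoints $b_0,b_1$ along the new boundary, and the correct identification of the link involves a choice of new basepoints on $S'$; Wahl handles this by working with a slightly more general complex allowing sets of marked points rather than just two, and this is where the bookkeeping with $i$ enters. Your claim that $h+i$ drops by exactly one in every case is the right invariant to track, but verifying it requires the case analysis you allude to (two-sided arc with $i=1$ versus $i=2$, one-sided arc), and the induction must be set up so that the link really is an instance of the inductive hypothesis.
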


\begin{definition}[{\cite[\defn{5.3}]{Wahl07}}]
    Let $S$ be an orientable or nonorientable, connected surface.
    $\mathcal C_0(S)$ is the simplicial complex
        whose $p$-simplices are collections of $p + 1$ distinct isotopy classes
            of simple closed curves in $S$
            that admit representatives
            $
                c_0, \ldots, c_p
            $
            such that
            \begin{enumerate}
                \item
                $c_i \cap \del S = \emptyset$ for each $i$,
                \item
                $c_i \cap c_j = \emptyset$ for each $i \neq j$, and
                \item
                $S - (c_0 \cup \cdots \cup c_p)$ is connected.
            \end{enumerate}
\end{definition}

\begin{theorem}[{\cite[\thm{5.4}]{Wahl07}}]\label{thm:c0_high_conn}
    Let $S$ be an orientable or nonorientable, connected surface.
    Then $\mathcal C_0(S)$ is $\floor{\frac{h(S) - 3}{2}}$-connected.
\end{theorem}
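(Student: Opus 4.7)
The plan is to proceed by induction on $h(S)$, with base cases corresponding to $h(S) \leq 2$, where the statement reduces to nonemptiness of $\mathcal C_0(S)$. These are verified directly: a longitude works for $S_{1,r}$, and the core circle of a Möbius band summand works for $N_{2,r}$; for $h(S) \leq 1$ the claimed bound is vacuous.

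For the inductive step, reduce to the case where $S$ has nonempty boundary (if it does not, remove an open disk, which does not change $h(S)$). Pick two distinct points $b_0, b_1$ on the same boundary component. By \cref{thm:bx_high_conn}, the arc complex $BX(S, b_0, b_1)$ is $(h(S) - 2)$-connected, which is well above the target bound $\lfloor (h(S) - 3)/2 \rfloor$. The strategy is to transfer this connectivity to $\mathcal C_0(S)$ via a surgery map.

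The key construction is a map $\Phi \colon BX(S, b_0, b_1) \to \mathcal C_0(S)$ that sends an arc $a$ to the boundary curve $c_a$ of a regular neighborhood of $a \cup \gamma$, where $\gamma$ is a short boundary arc between $b_0$ and $b_1$. If $S - a$ is connected, then $S - c_a$ deformation retracts onto $S - (a \cup \gamma)$, which is connected, so $c_a$ lies in $\mathcal C_0(S)$. Disjointness is preserved, so $\Phi$ extends to a simplicial map after choosing neighborhoods consistently on simplices. To deduce the connectivity statement, apply a fiber-type argument in the spirit of Randal-Williams--Wahl: the preimage (or homotopy fiber) of a simplex $\sigma \in \mathcal C_0(S)$ is controlled by arc complexes on the cut surface $S \setminus c$, whose connectivity is handled by the inductive hypothesis. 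Combining the connectivity of the source $BX$ with the connectivity of the fibers gives the claimed connectivity of $\mathcal C_0(S)$.

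The main obstacle is the nonorientable case, where $\mathcal C_0(S)$ contains both $1$-sided and $2$-sided non-separating curves. Cutting along a $2$-sided curve reduces $h$ by $2$, but cutting along a $1$-sided curve only reduces $h$ by $1$, so the inductive hypothesis on the link of a curve yields \emph{different} amounts of connectivity in the two cases. The floor in $\lfloor (h(S) - 3)/2 \rfloor$ is precisely what accommodates the worse of the two; the technical heart of the proof is checking that $\Phi$ surjects onto a subcomplex that reaches both types of curves, and that the bookkeeping in the fiber step respects the parity losses incurred by the $1$-sided case.
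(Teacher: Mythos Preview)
The paper does not prove this statement; it is quoted from \cite[Theorem~5.4]{Wahl07} and used as a black box, so there is no proof in the present paper to compare your attempt against.

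Evaluating your sketch on its own merits: the overall shape---relate $\mathcal C_0(S)$ to $BX(S,b_0,b_1)$ via an arc-to-curve map and transfer connectivity---is reasonable and in the spirit of how such results are proved. But the step you call the ``fiber-type argument'' is where all the content lies, and it is not actually carried out. You set up an induction on $h(S)$ whose hypothesis is the connectivity of $\mathcal C_0$ for smaller surfaces, yet you then say the fibers of $\Phi$ are \emph{arc} complexes on the cut surface, whose connectivity would come from \cref{thm:bx_high_conn} rather than from the inductive hypothesis. Either the induction is idle or the fibers are not what you claim; in either case the argument is not specified. You also do not name the tool that converts ``source highly connected and fibers highly connected'' into ``target highly connected'' for a simplicial map; this is not automatic, and the relevant notion of fiber is not the set-theoretic preimage. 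Your final paragraph correctly identifies the one-sided versus two-sided dichotomy as the crux in the nonorientable case, but asserting that ``$\Phi$ surjects onto a subcomplex that reaches both types'' is not a mechanism, and as stated $\Phi$ need not even be injective on vertices of a simplex.

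A smaller point: the claim that $S-c_a$ deformation retracts onto $S-(a\cup\gamma)$ is not literally true (neither space is contained in the other), though the intended conclusion---that $c_a$ is nonseparating whenever $S-a$ is connected---is correct and can be argued directly.
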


\subsection{Connectivity estimates for the complex of separating arcs}

The objective of this subsection is to prove a connectivity estimate
    for the $E_1$-splitting complexes $S^{E_1}(S)$.
We use an induction argument that
    creates more boundary components in the induction step and
    will suggest the following auxiliary complex.

\begin{definition}
    Let $S$ be an orientable or nonorientable, connected surface
        with $r \geq 1$ boundary components,
    let $b_0, b_1 \in \del S$ be distinct points in the same boundary component, and
    let $\tilde b_0$ be an orientation of the boundary near $b_0$.
    $
        \mathscr D'(S, b_0, \tilde b_0, b_1) \subseteq \mathscr D(S, b_0, b_1)
    $
    is the subcomplex
        consisting of those collections $\sigma$ of isotopy classes of arcs
        that cut $S$ into surfaces $S_0, \ldots, S_{p + 1}$
            (ordered according to $\tilde b_0$ at $b_0$)
            such that $S_0, \ldots, S_p$ each have only one boundary component.
\end{definition}

We remind the reader of the function $h$ defined in \cref{subsec:auxiliary},
    which counts crosscaps by $1$ and torus holes by $2$.

\begin{theorem}\label{thm:biased_separating_arcs_high_conn}
    Let $(S, b_0, \tilde b_0, b_1)$ be as above.
    Then
    \begin{enumerate}
        \item\label{thm:biased_separating_arcs_high_conn:a}
        if $r = 1$,
            $\mathscr D'(S, b_0, \tilde b_0, b_1)$ is
                $\(\floor{\frac{h(S) - 1}{2}} - 2\)$-connected, or
        \item\label{thm:biased_separating_arcs_high_conn:b}
        if $r > 1$,
            $\mathscr D'(S, b_0, \tilde b_0, b_1)$ is
                $\(\floor{\frac{h(S) - 1}{2}} - 1\)$-connected.
    \end{enumerate}
\end{theorem}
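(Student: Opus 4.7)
The plan is to prove this by induction on $h(S)$, with a secondary induction on $r$ handled implicitly via the link computation. The base cases are those in which the claimed connectivity equals $-1$, where it suffices to exhibit a single vertex: a separating arc cutting off a one-boundary piece of minimal positive $h$-value (a crosscap $N_{1,1}$ or a torus hole $S_{1,1}$, whichever is available), and the threshold on $h(S)$ is precisely what guarantees the existence of such an arc while keeping both sides non-disk.

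For the inductive step, the central structural observation is that for any vertex $v = [a] \in \mathscr{D}'(S, b_0, \tilde b_0, b_1)$ whose arc $a$ cuts off a one-boundary piece $S_a$ on the left, the link of $v$ is canonically isomorphic to $\mathscr{D}'(S \setminus S_a, b_0', \tilde b_0', b_1)$, where $S \setminus S_a$ still has $r$ boundary components (the cut only adds a new one and none of the other $r-1$ components of $S$ migrate into $S_a$) and $h(S \setminus S_a) = h(S) - h(S_a) \leq h(S) - 1$. The inductive hypothesis then yields link-connectivity exactly one less than the target in both cases~\ref{thm:biased_separating_arcs_high_conn:a} and~\ref{thm:biased_separating_arcs_high_conn:b}, which is the input a link-based connectivity argument requires.

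To execute the induction, I would employ a bad simplex argument in the style of Hatcher--Vogtmann and Wahl. Given a simplicial sphere $f \colon S^k \to \mathscr{D}'(S, b_0, \tilde b_0, b_1)$ in the claimed range, fix a reference vertex $v_0 = [a_0]$ with $a_0$ cutting off a minimal crosscap or torus hole on the left. One then homotopes $f$ through intermediate complexes to land in $\mathrm{Star}(v_0)$, which is a cone and hence contractible. The homotopy is built by working through simplices of $f$ in an order controlled by their geometric intersection with $a_0$; for each "bad" simplex, a surgery replaces the offending arcs with arcs disjoint from $a_0$ (still cutting off one-boundary pieces on the left), using Wahl's estimates \cref{thm:bx_high_conn} and \cref{thm:c0_high_conn} applied to the surfaces produced by cutting along $a_0$, together with the inductive link connectivity computed above to fill in the surgery disks.

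The main obstacle I anticipate is the surgery step itself: when an arc $a'$ representing a vertex of $f$ meets $a_0$ transversely, one must exchange an outermost intersection for a new separating arc whose cut-off piece has exactly one boundary component and is not a disk, all while preserving the simplex structure. This will likely require a case distinction according to whether $S$ is orientable or nonorientable, and according to whether $S_a$ is a Möbius strip ($h = 1$) or a torus hole ($h = 2$), since these affect which side of $a_0$ can absorb the surgery output. The orientation datum $\tilde b_0$ rigidifies the left/right distinction and is what makes the link identification canonical; tracking it carefully through the surgery is the most delicate bookkeeping of the proof.
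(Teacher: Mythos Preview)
Your approach differs substantially from the paper's, and the surgery step you yourself flag is a genuine obstruction rather than bookkeeping.

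The paper does not run a bad-simplex/surgery argument on $\mathscr D'$. Instead it converts $\mathscr D'(S,b_0,\tilde b_0,b_1)$ into a poset (via the $\tilde b_0$-ordering) and applies the Nerve Theorem of \cite[Cor.~4.2]{GKRW19MCG} with an auxiliary poset $\mathcal A$ of \emph{nonseparating} objects: for $r=1$ one takes $\mathcal A=\mathrm{simp}(\mathcal C_0(S))$, and for $r>1$ one takes $\mathcal A=\mathrm{simp}(BX(S,b_2,b_3))$ with $b_2,b_3$ on distinct boundary components. The functor $F$ sends a simplex of $\mathcal A$ to the subposet of $\mathscr D'$-arcs having all those curves/arcs on the right. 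The connectivity inputs \cref{thm:bx_high_conn} and \cref{thm:c0_high_conn} then plug directly into the Nerve Theorem hypotheses, and the induction on $h(S)$ proves \cref{thm:biased_separating_arcs_high_conn:a} before \cref{thm:biased_separating_arcs_high_conn:b} at each step, with \cref{thm:biased_separating_arcs_high_conn:b} also invoking \cref{thm:biased_separating_arcs_high_conn:a} at the \emph{same} value of $h$ and a secondary induction on $r$. Surgery is used only later, in the proof of \cref{thm:separating_arcs_high_conn} for the unbiased complex $\mathscr D$, and there the reference arc is ``special'': it cuts off an annulus around an extra boundary component, topology simple enough that the detour manifestly keeps the arc separating, nontrivial, and nonspecial.

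Your proposed surgery along an $a_0$ cutting off a M\"obius band or a punctured torus is a different matter: detouring a crossing around such a piece can destroy separating-ness, and even when it does not, there is no reason the new arc should cut off a one-boundary piece on the $\tilde b_0$-left. This is exactly why the paper routes through $\mathcal C_0$ and $BX$ instead. Separately, your link identification is incomplete: for a general vertex $v=[a]$ with left piece $S_a$, the link of $v$ in $\mathscr D'$ is the join $\mathscr D'(S_a,\ldots)*\mathscr D'(S\setminus S_a,\ldots)$, since arcs lying inside $S_a$ (to the left of $a$) are also compatible with $a$. For your minimal $v_0$ the left factor is empty and your formula is correct, but any argument that needs link connectivity for \emph{all} vertices must track both sides; in the paper's Nerve Theorem setup this is precisely the split into $\mathcal X_{<x}\cong\tilde{\mathscr D}'(S_0,\ldots)$ and $\mathcal A_x\cong\mathrm{simp}(\mathcal C_0(S_1))$ or $\mathrm{simp}(BX(S_1,\ldots))$.
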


The proof relies on the following theorem.
A poset $\mathcal X$ is said to be \emph{$n$-connected},
    if its nerve $N \mathcal X$ is $n$-connected.
A subposet $\mathcal Y \subseteq \mathcal X$ is said to be \emph{closed} if
    for any $y \in \mathcal Y$,
    $
        x < y
        \implies
        x \in \mathcal Y.
    $

\begin{theorem}[Nerve Theorem, {\cite[\cor{4.2}]{GKRW19MCG}}]\label{thm:nerve_thm}
    Let $\mathcal X$ be a poset,
    let $\mathcal A$ be another poset,
    let
        \[
            F : \mathcal A^\op \to \{\text{closed subposets of $\mathcal X$}\}
        \]
        be a map of posets,
    let
        $
            t_{\mathcal X} : \mathcal X \to \Z
        $
        and
        $
            t_{\mathcal A} : \mathcal A \to \Z
        $
        be functions of sets, and
    let $n \in \Z$.
    Suppose that
    \begin{enumerate}[(1)]
        \item\label{thm:nerve_thm:1}
        $\mathcal A$ is $(n - 1)$-connected,
        \item\label{thm:nerve_thm:2}
        for every $a \in \mathcal A$,
            $\mathcal A_{<a}$ is $(t_{\mathcal A}(a) - 2)$-connected and
            $F(a)$ is $(n - t_{\mathcal A}(a) - 1)$-connected, and
        \item\label{thm:nerve_thm:3}
        for every $x \in \mathcal X$,
            $\mathcal X_{<x}$ is $(t_{\mathcal X}(x) - 2)$-connected and
            the subposet
            \[
                \mathcal A_x \defeq \{a \in \mathcal A \mid x \in F(a)\}
            \]
            is $((n - 1) - t_{\mathcal X}(x) - 1)$-connected.
    \end{enumerate}
    Then $\mathcal X$ is $(n - 1)$-connected.
\end{theorem}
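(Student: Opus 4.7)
The plan is to prove the Nerve Theorem via a Grothendieck construction together with two applications of a quantitative form of Quillen's Theorem A. Form the poset $\mathcal E \defeq \{(a, x) \in \mathcal A^\op \times \mathcal X : x \in F(a)\}$ with the componentwise order inherited from $\mathcal A^\op \times \mathcal X$, and consider the two forgetful projections $\pi_{\mathcal A} : \mathcal E \to \mathcal A^\op$ and $\pi_{\mathcal X} : \mathcal E \to \mathcal X$. The strategy is to show that each of these induces an $(n-1)$-connected map on nerves; chained with hypothesis (1), this yields the $(n-1)$-connectivity of $|\mathcal X|$.

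Both connectivity transfers invoke a quantitative Quillen Theorem A of roughly the form: if $p : \mathcal Y \to \mathcal Z$ is a poset map such that for each $z \in \mathcal Z$ the slice $p^{-1}(\mathcal Z_{\geq z})$ is $((n-1) - t(z) - 1)$-connected while $\mathcal Z_{<z}$ is $(t(z) - 2)$-connected, then $p$ itself is $(n-1)$-connected. For $\pi_{\mathcal X}$, the slice over $\mathcal X_{\geq x}$ deformation retracts onto the subposet $\mathcal A_x \times \{x\}$ via $(a, y) \mapsto (a, x)$; closedness of $F(a)$ ensures $x \in F(a)$ whenever $y \geq x$ and $y \in F(a)$, and the comparison $(a, x) \leq (a, y)$ gives the homotopy on nerves. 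Hypothesis (3) is then exactly the connectivity input with $t = t_{\mathcal X}$. For $\pi_{\mathcal A}$, the slice over $\mathcal A^\op_{\leq a}$ deformation retracts onto $\{a\} \times F(a)$ via $(a', y) \mapsto (a, y)$, using the functoriality $F(a') \supseteq F(a)$ --- automatic since $a' \geq_{\mathcal A} a$ in the slice --- to keep $y$ in $F(a)$; hypothesis (2) supplies the connectivity input with $t = t_{\mathcal A}$.

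The main obstacle is the quantitative Quillen's Theorem A itself; the classical qualitative version, which demands contractible slices, does not suffice because our slices are only highly connected to a degree that varies with $z$. The natural approach is a skeletal filtration of $|\mathcal Z|$ by sublevel sets of $t$, or equivalently a Bousfield--Kan spectral sequence, where one verifies that the connectivity of each fiber compensates precisely for the cellular dimension that its basepoint contributes; the mysterious-looking shifts by $-1, -2$ in the hypotheses are pinned down by this bookkeeping. Once the auxiliary theorem is in hand, the two-step transfer $|\mathcal A^\op| \leftarrow |\mathcal E| \to |\mathcal X|$ is a mechanical concatenation, and the deformation retract identifications above show the hypotheses of the auxiliary theorem are verified by (2) and (3).
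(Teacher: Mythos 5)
The paper itself contains no proof of this statement: it is imported wholesale from \cite[Cor.~4.2]{GKRW19MCG}, so there is no internal argument to compare you with; the relevant comparison is with the source, which deduces the corollary from a quantitative poset fiber theorem (its Theorem~4.1). Your architecture is exactly that route: form the graph poset $\mathcal E = \{(a,x) : x \in F(a)\} \subseteq \mathcal A^{\mathrm{op}} \times \mathcal X$, identify the slices of the two projections ($\pi_{\mathcal X}^{-1}(\mathcal X_{\geq x})$ retracts to $\mathcal A_x$ by downward-closedness of $F(a)$, and $\pi_{\mathcal A}^{-1}((\mathcal A^{\mathrm{op}})_{\leq a})$ retracts to $F(a)$ because $F$ is order-reversing on $\mathcal A$), and feed hypotheses (2) and (3) into a quantitative Theorem~A for each leg. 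Those slice identifications are correct (though note the containment you invoke is backwards as written: $a' \geq_{\mathcal A} a$ gives $F(a') \subseteq F(a)$, which is what the retraction actually needs, not $F(a') \supseteq F(a)$).

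Two things keep this from being a proof. First, the quantitative Quillen Theorem~A is the entire mathematical content of the statement --- it is precisely \cite[Thm.~4.1]{GKRW19MCG} --- and you leave it unproven, offering only a one-sentence strategy (filtration by sublevel sets of $t$, Bousfield--Kan); so the proposal assumes the result that has to be established. Second, the chaining is off by one. You assert that both projections are $(n-1)$-connected and that together with (1) this gives $(n-1)$-connectivity of $\mathcal X$. But an $(n-1)$-connected map is only surjective on $\pi_{n-1}$, and for the leg $\pi_{\mathcal A} : \mathcal E \to \mathcal A^{\mathrm{op}}$ that surjection points the wrong way: $\pi_{n-1}(\mathcal A) = 0$ tells you nothing about $\pi_{n-1}(\mathcal E)$, so your argument only yields $(n-2)$-connectivity of $\mathcal X$. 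The hypotheses are calibrated to repair exactly this: (2) asks $F(a)$ to be $(n - t_{\mathcal A}(a) - 1)$-connected, one degree better than the $((n-1) - t_{\mathcal X}(x) - 1)$ asked of $\mathcal A_x$ in (3), so the fiber theorem applied to $\pi_{\mathcal A}$ with parameter $n$ (and in its dual form --- your quoted version uses slices $p^{-1}(\mathcal Z_{\geq z})$ and links $\mathcal Z_{<z}$, whereas for this leg you use slices over $(\mathcal A^{\mathrm{op}})_{\leq a}$ and hypothesis (2) concerns $\mathcal A_{<a} = (\mathcal A^{\mathrm{op}})_{>a}$, so you must pass to opposite posets) gives an $n$-connected map, which does pull $(n-1)$-connectivity back from $\mathcal A^{\mathrm{op}}$ to $\mathcal E$; the $(n-1)$-connected leg $\pi_{\mathcal X}$ then pushes it forward to $\mathcal X$. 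With that correction, and with an actual proof of the fiber theorem, your outline would go through.
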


The following proof is inspired by the proof of \cite[\thm{4.9}]{GKRW19MCG}.
We use curves instead of arcs to ensure
    that cutting always decrease genus,
    as would not be the case when cutting along one-sided arcs.

\begin{proof}[Proof of \cref{thm:biased_separating_arcs_high_conn}]
    Ordering arcs according to $\tilde b_0$ at $b_0$
        endows $\mathscr D'(S, b_0, \tilde b_0, b_1)$
            with the structure of a semisimplicial set.
    Furthermore,
        it gives
        $
            \tilde{\mathscr D}'(S, b_0, \tilde b_0, b_1)
            \defeq
            \mathscr D'(S, b_0, \tilde b_0, b_1)_0
        $
        the structure of a poset.
    The canonical map of simplicial sets
        \[
            s_* \mathscr D'(S, b_0, \tilde b_0, b_1)
            \tox{\cong}
            N(\tilde{\mathscr D}'(S, b_0, \tilde b_0, b_1)),
        \]
        is an isomorphism,
        so it suffices to show
            that $\tilde{\mathscr D}'(S, b_0, \tilde b_0, b_1)$ is highly-connected.
    The cases
        \cref{thm:biased_separating_arcs_high_conn:a} and
        \cref{thm:biased_separating_arcs_high_conn:b}
        are proven simultaneously by induction on $h(S)$.
    The base case $h(S) = 0$ is vacuous,
        so suppose $h(S) \geq 1$.
    In each induction step,
        Case \cref{thm:biased_separating_arcs_high_conn:a} is proven
            before Case \cref{thm:biased_separating_arcs_high_conn:b},
            which uses Case \cref{thm:biased_separating_arcs_high_conn:a}.

    \para{Case \cref{thm:biased_separating_arcs_high_conn:a}}
    Suppose \cref{thm:biased_separating_arcs_high_conn:a}
        and \cref{thm:biased_separating_arcs_high_conn:b}
        are known for surfaces $S'$ with $h(S') < h(S)$.
    For a simplicial complex $X$, let $\f{simp}(X)$ denote the poset of simplices.
    Note that $\f{simp}(X)$ is $n$-connected
        if and only if $X$ is $n$-connected.

    Let
        $
            \mathcal A \defeq \f{simp}(\mathcal C_0(S)).
        $
    Consider the map of posets
        \begin{align*}
            F :
                \mathcal A^\op
                &\to
                \{\text{closed subposets of $\tilde{\mathscr D}'(S, b_0, \tilde b_0, b_1)$}\},
            \\
                \<c_0, \ldots, c_p\>
                &\mapsto
                \{a \mid \text{$c_0, \ldots, c_p$ are on the right of $a$}\}.
        \end{align*}
    By an isotopy class $c_i$ \q{being on the right} of an isotopy class $a$,
        we mean that there are representatives enjoying this property.
    Then we show \cref{thm:biased_separating_arcs_high_conn:a} by substituting
        \begin{align*}
            n &= \floor{\frac{h(S) - 1}{2}} - 1,
            \\
            t_{\mathcal A}\<a_0, \ldots, a_p\> &= p,
            \\
            t_{\mathcal X}(a) &= \floor{\frac{h(S_0 - 1)}{2}}
                \text{ for $S_0$ the surface left of $a$}
        \end{align*}
        into \cref{thm:nerve_thm}.
    We now check its assumptions.
    \begin{enumerate}
        \item[\cref{thm:nerve_thm:1}]
        $\mathcal A$ is $\floor{\frac{h(S) - 3}{2}}$-connected by \cref{thm:c0_high_conn},
            and so \cref{thm:nerve_thm:1} follows from the inequality
            \[
                \floor{\frac{h(S) - 1}{2}} - 2 \leq \floor{\frac{h(S) - 3}{2}}.
            \]

        \item[\cref{thm:nerve_thm:2}]
        For any $p$-simplex $\sigma \in \mathcal A$,
            $\mathcal A_{<\sigma}$ is isomorphic to $\del \Delta^p$,
            which is $(p - 2)$-connected,
            showing the first part of \cref{thm:nerve_thm:2}.
        On the other hand,
            the canonical map%
            \footnote{
                This map is well-defined by standard arguments.
                For example, one can check this
                    by an induction on intersections and \cite[\lem{3.2}]{Epstein1966}.
            }
            \[
                F(\sigma) \tox{\cong} \tilde{\mathscr D}'(S - \sigma, b_0, \tilde b_0, b_1)
            \]
            admits an inverse induced by the gluing map $S - \sigma \to S$.
        $S - \sigma$ has at least two boundary components and
            $
                h(S - \sigma) \geq h(S) - 2(p + 1).
            $
        Thus, by the induction hypothesis on \cref{thm:biased_separating_arcs_high_conn:b},
            $F(\sigma)$ is at least
            \[
                \floor{\frac{h(S) - 2(p + 1) - 1}{2}} - 1
                =
                \floor{\frac{h(S) - 1}{2}} - 2 - p
                =
                n - t_{\mathcal A}(\sigma) - 1
            \]
            -connected,
            showing the second part of \cref{thm:nerve_thm:2}.

        \item[\cref{thm:nerve_thm:3}]
        Let $x \in \tilde{\mathscr D}'(S, b_0, \tilde b_0, b_1)$.
        Suppose $x$ separates the surface into
            $S_0$ on the left and
            $S_1$ on the right.
        Then
            $
                \tilde{\mathscr D}'(S, b_0, \tilde b_0, b_1)_{<x}
                \cong
                \tilde{\mathscr D}'(S_0, b_0, \tilde b_0, b_1)
            $
            and
            $
                \mathcal A_x \cong \f{simp}(\mathcal C_0(S_1)).
            $
        By the induction hypothesis on
            \cref{thm:biased_separating_arcs_high_conn:a},
            the first part of assumption \cref{thm:nerve_thm:3} is clear.
        Note that $h(S_0) + h(S_1) = h(S)$,
            so
            \[
                \floor{\frac{h(S_0) - 1}{2}} + \floor{\frac{h(S_1) - 1}{2}}
                \geq
                \floor{\frac{h(S) - 1}{2}} - 1.
            \]
        By \cref{thm:c0_high_conn},
            $\mathcal C_0(S_1)$ is therefore
            \begin{align*}
                \floor{\frac{h(S_1) - 1}{2}} - 1
                &\geq
                \floor{\frac{h(S) - 1}{2}} - 1 - \floor{\frac{h(S_0) - 1}{2}} - 1
                \\
                &>
                (n - 1) - t_{\mathcal X}(x) - 1
            \end{align*}
            -connected.
        This shows the second part of \cref{thm:nerve_thm:3}.
    \end{enumerate}

    \para{Case \cref{thm:biased_separating_arcs_high_conn:b}}
    Suppose
        \begin{enumerate}[(i)]
            \item\label{item:induction_hypothesis:i}
            \cref{thm:biased_separating_arcs_high_conn:a} is known
                for $S'$ with $h(S') \leq h(S)$,
            \item\label{item:induction_hypothesis:ii}
            \cref{thm:biased_separating_arcs_high_conn:b} is known
                for $S'$ with $h(S') < h(S)$, and
            \item\label{item:induction_hypothesis:iii}
            \cref{thm:biased_separating_arcs_high_conn:b} is known
                for $S'$ with $2 \leq r' < r$ boundary components.
        \end{enumerate}
    Choose $b_2 \in \del S$ on the interior
        of the right-hand (according to $\tilde b_0$) segment between $b_0, b_1$ of the boundary,
        and choose $b_3 \in \del S$ on another boundary component.
    Let
        $
            \mathcal A \defeq \f{simp}(BX(S, b_2, b_3)).
        $
    Consider the map of posets
        \begin{align*}
            F :
                \mathcal A^\op
                &\to
                \{\text{closed subposets of $\tilde{\mathscr D}'(S, b_0, \tilde b_0, b_1)$}\},
            \\
                (a_0, \ldots, a_p)
                &\mapsto
                \{a \mid \text{$a_0, \ldots, a_p$ is on the right of $a$}\}.
        \end{align*}
    Then we show \cref{thm:biased_separating_arcs_high_conn:b} by substituting
        \begin{align*}
            n &= \floor{\frac{h(S) - 1}{2}},
            \\
            t_{\mathcal A}\<a_0, \ldots, a_p\> &= p,
            \\
            t_{\mathcal X}(a) &= \floor{\frac{h(S_0) - 1}{2}}
                \text{ for $S_0$ the surface left of $a$}
        \end{align*}
        into \cref{thm:nerve_thm}.
    We now check its assumptions.
    \begin{enumerate}
        \item[\cref{thm:nerve_thm:1}]
        $\mathcal A$ is $(h(S) - 2)$-connected by \cref{thm:bx_high_conn},
            and so \cref{thm:nerve_thm:1} follows from the inequality
            \[
                \floor{\frac{h(S) - 1}{2}} - 1 \leq h(S) - 1.
            \]

        \item[\cref{thm:nerve_thm:2}]
        For any $p$-simplex $\sigma \in \mathcal A$,
            $\mathcal A_{< \sigma}$ is isomorphic to $\del \Delta^p$,
            which is $(p - 2)$-connected,
            showing the first part of \cref{thm:nerve_thm:2}.
        If $S - \sigma$ has only one boundary component,
            $F(\sigma)$ is contractible
            as the arc parallel to
                the right-hand (according to $\tilde b_0$) boundary segment
                    between $b_0, b_1$ of $S - \sigma$
                constitutes a terminal element, and
        if $S - \sigma$ has multiple boundary components,
            $
                F(\sigma) \cong \tilde{\mathscr D}'(S - \sigma, b_0, \tilde b_0, b_1)
            $
            for reasons similar to those in \cref{thm:biased_separating_arcs_high_conn:a}.
        In the former case,
            there is nothing to show,
            so suppose that $S - \sigma$ has multiple boundary components.
        We have
            $
                h(S - \sigma) \geq h(S) - 2p.
            $
        Therefore,
            $F(\sigma)$ is at least
            \[
                \floor{\frac{h(S) - 2p - 1}{2}} - 1
                =
                \floor{\frac{h(S) - 1}{2}} - p - 1
            \]
            -connected
            by induction hypothesis
                \cref{item:induction_hypothesis:ii}
                    if $h(S - \sigma) < h(S)$ or
                \cref{item:induction_hypothesis:iii}
                    if $S - \sigma$ has fewer boundary components than $S$.
        This shows the second part of \cref{thm:nerve_thm:2}.

        \item[\cref{thm:nerve_thm:3}]
        Let $x \in \tilde{\mathscr D}'(S, b_0, \tilde b_0, b_1)$.
        Suppose $x$ separates the surface into
            $S_0$ on the left and
            $S_1$ on the right.
        Then
            $
                \tilde{\mathscr D}'(S, b_0, \tilde b_0, b_1)_{<x}
                \cong
                \tilde{\mathscr D}'(S_0, b_0, \tilde b_0, b_1)
            $
            and
            $
                \mathcal A_x \cong \f{simp}(BX(S_1, b_2, b_3)).
            $
        Note that $S_0$ has only one boundary component.
        By induction hypothesis
            \cref{item:induction_hypothesis:i},
            the first part of assumption \cref{thm:nerve_thm:3} is clear.
        Note that $h(S_0) + h(S_1) = h(S)$.
        By \cref{thm:bx_high_conn},
            $BX(S_1, b_2, b_3)$ is therefore
            \begin{align*}
                h(S_1) - 1
                &=
                h(S) - h(S_0) - 1
                \\
                &\geq
                (n - 1) - t_{\mathcal X}(x) - 1
            \end{align*}
            -connected.
        This shows the second part of \cref{thm:nerve_thm:3}.
    \end{enumerate}
\end{proof}

Since
    $
        \mathscr D(S, b_0, b_1)
        =
        \mathscr D'(S, b_0, \tilde b_0, b_1)
    $
    when $S$ has only one boundary component,
    \cref{thm:biased_separating_arcs_high_conn} will suffice for our purpose
        of giving a vanishing estimate for the $E_2$-homology of $R$.
However, the following result might be interesting in its own right.

\begin{theorem}\label{thm:separating_arcs_high_conn}
    Let $(S, b_0, b_1)$ be as above,
        where $S$ has $r \geq 1$ boundary components.
    Then the complex of separating arcs $\mathscr D(S, b_0, b_1)$ is
        $\(\floor{(h(S) - 1)/2} - 3 + r\)$-connected.
\end{theorem}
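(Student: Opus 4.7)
I plan to prove \cref{thm:separating_arcs_high_conn} by induction on the number of boundary components $r$, reducing to \cref{thm:biased_separating_arcs_high_conn}. The base case $r = 1$ is immediate: when $S$ has a single boundary component, each piece obtained from cutting by a collection of separating arcs automatically has only one boundary component, so $\mathscr{D}(S, b_0, b_1) = \mathscr{D}'(S, b_0, \tilde b_0, b_1)$ for any orientation $\tilde b_0$, and \cref{thm:biased_separating_arcs_high_conn:a} supplies the required $\lfloor (h(S) - 1)/2 \rfloor - 2$ connectivity.

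For the inductive step with $r \geq 2$, I would apply the Nerve Theorem (\cref{thm:nerve_thm}) to $\mathcal X = \f{simp}(\mathscr{D}(S, b_0, b_1))$, in the style of Case \cref{thm:biased_separating_arcs_high_conn:b} of the proof of \cref{thm:biased_separating_arcs_high_conn}. The natural auxiliary poset is $\mathcal A = \f{simp}(BX(S, b_2, b_3))$ for a point $b_2 \in \del S$ on the boundary component $\del_0$ containing $b_0, b_1$ (placed on the segment of $\del_0$ disjoint from the segment between $b_0$ and $b_1$ specified by $\tilde b_0$) and $b_3$ on a second boundary component $\del_* \neq \del_0$ (which exists because $r \geq 2$). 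The map $F$ sends a simplex $\sigma \in \mathcal A$ to the closed subposet of simplices of $\mathscr{D}(S, b_0, b_1)$ representable disjointly from the arcs of $\sigma$; cutting $S$ along $\sigma$ identifies $F(\sigma)$ with $\f{simp}(\mathscr{D}(S^\sigma, b_0, b_1))$ on the connected cut surface $S^\sigma$, and the inductive hypothesis supplies its connectivity. The connectivity of $\mathcal A$ comes from \cref{thm:bx_high_conn}; that of $\mathcal A_x$ for a simplex $x \in \mathcal X$ follows by identifying $\mathcal A_x$ with $\f{simp}(BX(S_j, b_2, b_3))$ for the piece $S_j$ of $S - x$ containing $b_3$, then reapplying \cref{thm:bx_high_conn}. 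The indexing functions $t_{\mathcal A}(\sigma) = q$ for a $q$-simplex $\sigma$ and $t_{\mathcal X}(a) = \lfloor (h(S_L) - 1)/2 \rfloor$ for $S_L$ the piece of $S - a$ on the left would mirror those used in the proof of Case \cref{thm:biased_separating_arcs_high_conn:b}, with target $n - 1 = \lfloor (h(S) - 1)/2 \rfloor - 3 + r$.

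The principal obstacle is the arithmetic. Euler-characteristic bookkeeping yields $h(S^\sigma) = h(S) - 2q$ and $r^\sigma = r + q - 1$ after cutting along a $q$-simplex $\sigma$ of $BX(S, b_2, b_3)$: the first cut merges $\del_0$ with $\del_*$, decreasing $r$ by one without touching $h$, and each of the remaining $q$ cuts is a non-separating boundary-to-self arc (by the $BX$ hypothesis), trading one unit of genus for one new boundary component. The inductive hypothesis then yields $(\lfloor (h(S) - 1)/2 \rfloor - 4 + r)$-connectivity for $F(\sigma)$, which falls short by exactly one degree of the $(\lfloor (h(S) - 1)/2 \rfloor - 3 + r - q)$-connectivity required by the Nerve Theorem at the boundary case $q = 0$. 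Closing this one-degree gap is where the real work will lie; I expect it will require either refining the auxiliary complex so that the cuts individually preserve $h$ while reducing $r$ (for example, restricting to arc systems in which each arc connects $\del_0$ to a distinct boundary component, at the cost of imposing further hypotheses on $r$), or performing a joint induction on $(h(S), r)$ in which a strengthened version of the biased statement is carried alongside the unbiased one.
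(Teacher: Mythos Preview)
Your base case is correct and matches the paper. The inductive step, however, takes a different route from the paper and runs into exactly the one-degree shortfall you identify; this is not an arithmetic artifact that can be massaged away, but a structural obstruction to using the Nerve Theorem with $\mathcal A = \f{simp}(BX(S, b_2, b_3))$ here. For a $0$-simplex $\sigma$ of $BX$, cutting merges two boundary components without touching $h$, so $S^\sigma$ satisfies the same hypotheses as $S$ with $r$ decreased by one. You are therefore asking the inductive hypothesis to deliver the full target connectivity, with nothing left over to pay the cost $t_{\mathcal A}(\sigma) + 1 = 1$ that the Nerve Theorem demands. No tweak of $F$ or $t_{\mathcal X}$ repairs this, because the poset $\mathcal A$ already contains $0$-simplices that fail to lower $h$; your proposed fixes (restricting $\mathcal A$, or strengthening the biased statement) are both plausible but neither is straightforward.

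The paper drops the Nerve Theorem for the inductive step and uses Hatcher's arc-surgery technique instead. Choose an extra boundary component $C$ and call a vertex $I$ of $\mathscr D(S, b_0, b_1)$ \emph{special} if it cuts off an annular neighborhood of $C$. Any two distinct special arcs must intersect, so the full complex decomposes as $\mathscr D_{\mathrm{sp}} \cup_j \St(I_j)$, where $\mathscr D_{\mathrm{sp}}$ is the subcomplex of non-special simplices and the stars are attached along their links. One then proves that $\mathscr D_{\mathrm{sp}} \cup \St(I_1)$ is contractible by surgering arcs towards a fixed special vertex $I_1$: push the outermost intersection of any arc with $I_1$ around the annulus, producing a still separating, nontrivial, non-special arc with strictly fewer intersections; iterating and interpolating along simplices yields a deformation retraction onto $\St(I_1)$. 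Since each link $\Lk(I_j)$ is isomorphic to $\mathscr D(S', b_0, b_1)$ for $S' = S \cup_C D^2$, one obtains
\[
    |\mathscr D(S, b_0, b_1)| \;\simeq\; \bigvee_{j \neq 1} S\,|\mathscr D(S', b_0, b_1)|,
\]
and the single unreduced suspension supplies exactly the one extra degree of connectivity per boundary component that your approach was missing.
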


We prove this using \cref{thm:biased_separating_arcs_high_conn}
    and the technique of surgering arcs,
    which is originally due to Hatcher \cite{Hatcher91}.
We use a particular variant of this technique,
    which we learned from \cite{Wahl07}.

\begin{proof}
    \para{Case: $r = 1$}
    This is
        \cref{thm:biased_separating_arcs_high_conn}\cref{thm:biased_separating_arcs_high_conn:a}
        for $\tilde b_0$ is chosen in any way.

    \para{Case: $r > 1$}
    We apply a surgery argument to reduce to the case $r = 1$.
    Proceed by induction on $r$.
    Pick a boundary component $C$ distinct from the one containing $b_0, b_1$.
    We call a vertex $I$ of $\mathscr D(S, b_0, b_1)$ \emph{special}
        if it separates the surface $S$ into surfaces $S_0, S_1$
            one of which is the annulus around $C$.
    Let
        $
            \{I_j\} \subseteq \mathscr D(S, b_0, b_1)_0
        $
        be the set of special vertices.
    The case $(h(S), r) = (0, 2)$ is vacuous.
    Therefore, suppose either $h(S) > 0$ or $r > 2$
        so that $\{I_j\} \neq \emptyset$.
    We have
        \begin{equation}\label{eqn:special_decomposition}
            \mathscr D(S, b_0, b_1)
            =
            \mathscr D_\f{sp}(S, b_0, b_1)
                \bigcup_j \St(I_j),
        \end{equation}
        where
        $
            \mathscr D_\f{sp}(S, b_0, b_1)
            \subseteq
            \mathscr D(S, b_0, b_1)
        $
        is the subcomplex of simplices not containing special vertices, and
        where each star $\St(I_j)$ is attached to $\mathscr D_\f{sp}(S, b_0, b_1)$
            along the link of $I_j$.
    Indeed, two distinct stars have intersection
        contained in $\mathscr D_\f{sp}(S, b_0, b_1)$,
        since any two distinct special arcs intersect.
    Pick a special vertex $I_1 \in \{I_j\}$.

    \begin{claim}
        $
            X \defeq \mathscr D_\f{sp}(S, b_0, b_1) \cup \St(I_1)
        $
        is contractible.
    \end{claim}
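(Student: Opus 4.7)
The plan is to show $X$ is contractible by exhibiting a deformation retraction onto $\St(I_1)$, which as a simplicial cone with apex $I_1$ is contractible. A simplex $\sigma = \langle a_0, \ldots, a_p\rangle$ of $X$ lies in $\St(I_1)$ precisely when its vertices admit representatives disjoint from $I_1$; call the other simplices of $X$ \emph{bad}.

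The core technical step is a Hatcher-style surgery argument. Given a bad simplex $\sigma$, first realize $I_1$ and the $a_i$ in pairwise minimal position. For each $a_i$ meeting $I_1$ transversely, I would iterate the following innermost surgery: let $p$ be the intersection point of $a_i$ with $I_1$ closest to $b_0$ along $a_i$; form two arcs from $b_0$ to $b_1$ by splicing the two pieces of $a_i \setminus \{p\}$ with the subarcs $I_1[b_0, p]$ and $I_1[p, b_1]$ in the two compatible orderings, then push off $I_1$ slightly. One of the resulting arcs is immediately disjoint from $I_1$; the other has one fewer intersection with $I_1$ and is surgered again, terminating after finitely many steps in a finite collection of arcs $\{\tilde a_i^{(j)}\}$ from $b_0$ to $b_1$, each disjoint from $I_1$.

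I would then verify that (i) every $\tilde a_i^{(j)}$ is a valid vertex of $\mathscr D(S, b_0, b_1)$---separating with neither complementary component a disk---and (ii) after perturbation, all arcs in $\{I_1\} \cup \{a_i\} \cup \{\tilde a_i^{(j)}\}$ can be realized with pairwise disjoint interiors, so they span a simplex $\tau$ of $\mathscr D(S, b_0, b_1)$. Non-speciality of each $\tilde a_i^{(j)}$ is automatic from its disjointness from $I_1$: any two distinct special vertices intersect, so a vertex disjoint from the special $I_1$ is either $I_1$ itself or non-special. Hence $\tau$ lies in $X$. Both $\sigma$ and the face $\tau \cap \St(I_1)$ are faces of $\tau$, so the straight-line homotopy inside $\tau$ sends $\sigma$ into $\St(I_1)$. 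After checking compatibility on common faces (arising from a canonical choice of surgery), these local homotopies assemble into a global deformation retraction $X \to \St(I_1)$.

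The main obstacle is verification (i): showing that no surgered arc bounds a disk or fails to separate $S$. This hinges on the structure of $I_1$ (which cuts off the annulus around the fixed boundary component $C$) together with the observation that each piece of $a_i \setminus I_1$ lies on a single side of $I_1$, so each surgered arc cuts $S$ into two subsurfaces assembled from components of $S \setminus (a_i \cup I_1)$; a case analysis, using that neither side of $a_i$ is a disk, rules out the possibility that one of these assembled pieces is a disk.
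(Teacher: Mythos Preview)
Your overall strategy—retract $X$ onto the star $\St(I_1)$ via a Hatcher-style surgery on arcs—matches the paper's. However, the specific surgery you describe and the way you package the homotopy both contain genuine gaps.

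\textbf{The big simplex $\tau$ does not exist.} You assert that after perturbation the arcs in $\{I_1\}\cup\{a_i\}\cup\{\tilde a_i^{(j)}\}$ can be realized with pairwise disjoint interiors. But by hypothesis the isotopy class $a_i$ has essential (minimal, non-removable) intersection with $I_1$; that is precisely why you are performing surgery. So $\{I_1, a_i\}$ is never a $1$-simplex, and no simplex of $\mathscr D(S,b_0,b_1)$ contains both $\sigma$ and a vertex of $\St(I_1)\setminus\Lk(I_1)$. The homotopy into $\St(I_1)$ cannot be a single straight line in one simplex; it must proceed through a \emph{sequence} of simplices, replacing one arc at a time by a surgered arc that is disjoint from the rest of the current simplex. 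The paper does exactly this: from $\langle a_0,\dots,a_p\rangle$ it passes to $\langle a_0,\dots,a_p,\tilde a_i\rangle$ and then drops $a_i$, iterating until all arcs are disjoint from $I_1$.

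\textbf{The spliced arcs need not be separating.} Your splicing surgery at the innermost point $p$ produces $c_1=a_i[b_0,p]\cup I_1[p,b_1]$ and its partner. You defend validity only against ``bounds a disk,'' but the harder condition here is that the arc \emph{separate} $S$. In $H_1(S,\partial S;\mathbb Z/2)$ one has $[c_1]=[I_1]+[\gamma]$ where $\gamma=a_i[b_0,p]\cup I_1[b_0,p]$ is a closed loop. When $a_i[b_0,p]$ lies in the annulus $A$, the loop $\gamma$ is parallel to $C\subset\partial S$ and hence vanishes in $H_1(S,\partial S)$, so $c_1$ separates. But when $a_i[b_0,p]$ lies in $S'=S\setminus A$, the loop $\gamma$ lives in $S'$ and there is no reason for $[\gamma]$ to vanish; equivalently, $c_1$ separates $S$ iff the sub-arc $a_i[b_0,p]$ separates $S'$, which fails whenever that sub-arc runs over a handle of $S'$. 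Your sentence ``each surgered arc cuts $S$ into two subsurfaces assembled from components of $S\setminus(a_i\cup I_1)$'' presupposes the conclusion.

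The paper sidesteps both problems by using a \emph{different} one-arc-in, one-arc-out surgery: it looks only at the segments of $a_i$ inside the annulus $A$ bounded by $I_1$ and $C$, takes the outermost such segment $\ell_1$, and reroutes it the long way around $C$. The resulting arc $\tilde a_i$ differs from $a_i$ by a loop around $C\subset\partial S$, hence remains separating; it is nontrivial and nonspecial precisely because $a_i$ was nonspecial; and it is isotopic off $a_i$ so that $\langle a_0,\dots,a_p,\tilde a_i\rangle$ is a genuine simplex, enabling the step-by-step homotopy.
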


    We say that an arc $a$ on $S$ is \emph{trivial}
        if $S - a$ has two components,
            one of which is diffeomorphic to $S_{0, 1}$.
    The definition of \q{special arcs} was effectively chosen
        so as to make the following argument go through.
    In particular, excluding special arcs ensures
        that the surgery does not create trivial arcs,
        as is illegal by \cref{def:separating_arcs}\cref{def:separating_arcs:b}.

    \begin{claimproof}
        We construct a contraction of $X$
            onto the star $\St(I_1)$,
            which is contractible.
        Fix a representative $I_1'$ for the isotopy class $I_1$.
        Let $I$ be a vertex of $X$.
        If $I \in \St(I_1)$,
            define $f(I) \defeq I$.
        If $I \not\in \St(I_1)$,
            any representative of $I$ intersects $I_1'$.
        Choose a representative $I'$ of $I$
            that intersects $I_1'$ transversely with minimal number of intersections.
        By standard arguments,
            such a choice is unique up to isotopy
                through arcs intersecting $I_1'$ transversely and minimally.
        Inside a fixed model of the $S_{0, 2}$ bounded by $I_1'$,
            label the segments of $I'$ by $\ell_1, \ldots, \ell_n$
                ordered starting with the one farthest from $b_0$.
        We arrange
            that $\ell_1, \ldots, \ell_{n - 1}$ are parallel straight lines through the annulus and,
            perhaps after relabeling $b_0, b_1$,
                the remaining $\ell_n$ either meets $b_1$ or is a straight line too.
        This places us in the situation of one of the following standard pictures.

        \begin{figure}
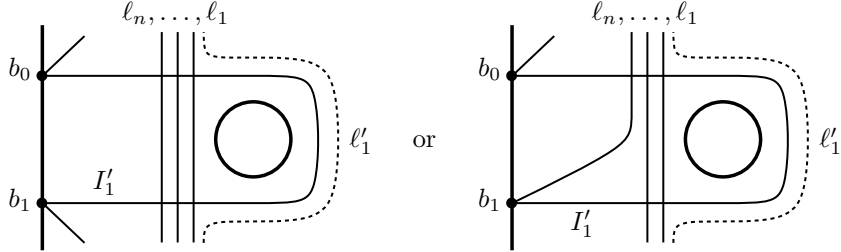

            \incfig{surgery_of_arcs_detour}
            \caption{
                The standard pictures.
            }
            \label{fig:detour}
        \end{figure}

        Replace the outermost segment $\ell_1$ of $I'$ with an arc $\ell_1'$
            that instead of intersecting $I_1'$ on the interior
                takes a detour around $I_1'$ to the other side.
        The resulting arc $\tilde I'$ obtained from making this replacement
            is again separating, nontrivial, and nonspecial,
            and it will have
                either $2n - 1 < 2n$ or $2n - 2 < 2n$ intersections with $I_1'$.
        Repeating this procedure,
            we eventually earn an arc $J'$,
            which has no intersections with $I_1'$.
        Define $f(I) \defeq J$
            for $J$ the isotopy class of $J'$.

        This defines a simplicial map
            $
                f :
                    X
                    \to
                    X
            $
            with image in $\St(I_1)$
            and we claim that
            $
                |f| \simeq \id_{|X|}.
            $
        If
            $
                \sigma = \<a_0, \ldots, a_p\>
                \in X
            $
            is not in $\St(I_1')$
            and $a_i$ contains the rightmost line through the annulus bounded by $I_1'$
                among all the lines spanned by $a_0, \ldots, a_p$ in the standard picture
                    as in \cref{fig:detour},
            then
            $
                \<a_0, \ldots, a_p, \tilde a_i\>
                \in X
            $
            also,
            where $\tilde a_i$ is $a_i$ after a single surgery step.
        By sliding along the latter simplex in a straight line to replace $a_i$ by $\tilde a_i$
            and iterating this procedure until all intersections with $I_1'$ are resolved,
            we obtain a well-defined homotopy
            $
                H_\sigma :
                    I \times |\sigma|
                    \to
                    |X|
            $
            between the geometric realizations of $f|\sigma$ and $\id_X|\sigma$.
        After possibly reparameterizing each $H_\sigma(\blank, x)$,%
            \footnote{
                For instance, proceed by induction over the skeleta of $X$.
            }
            all these homotopies patch together to a homotopy
            \[
                f \simeq \id_X :
                    I \times |X|
                    \to
                    |X|.
            \]
        Therefore, $f$ constitutes a homotopy equivalence.
    \end{claimproof}

    Note that for each $j$, $\Lk(I_j) \cong \mathscr D(S', b_0, b_1)$
        for $S' \defeq S \cup_C D^2$.
    With this, the claim, and the fact that stars are contractible,
        \cref{eqn:special_decomposition} implies that
        \[
            |\mathscr D(S, b_0, b_1)|
            \simeq
            \bigvee_{j \neq 1} S |\Lk(I_j)|
            \cong
            \bigvee_{j \neq 1} S |\mathscr D(S', b_0, b_1)|,
        \]
        where $S \blank$ denotes the unreduced suspension.
    The latter is $\(\floor{\frac{h(S) - 1}{2}} - 3 + r\)$-connected
        by the induction hypothesis on $r$.
    This completes the induction step.
\end{proof}

\subsection{Consequences for $E_2$-homology}

We now harvest vanishing ranges for $E_2$-homology
    from the connectivity estimates obtained above.

\begin{corollary}\label{cor:e1_vanishing_r}
    $
        H_{n, d}^{E_1}(R) = 0
    $
    for $d \leq \floor{\frac{n - 1}{2}} - 1$.
\end{corollary}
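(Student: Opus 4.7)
The plan is to deduce the vanishing range for $E_1$-homology directly from the connectivity estimate of Theorem \ref{thm:biased_separating_arcs_high_conn} using the general machinery of \cite[\S17]{GKRW19Ek}, which converts connectivity estimates for the $E_1$-splitting complex into $E_1$-homology vanishing ranges.

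First, I would apply Lemma \ref{lem:splitting_cpx_id} to identify, for each object $S \in \MCG$, the $E_1$-splitting complex $S^{E_1}(S)$ with the semisimplicial set $\mathscr D(S, b_0, b_1)$ of separating arcs. The crucial observation is that every object of $\MCG$ has exactly one boundary component, so, for any choice of boundary orientation $\tilde b_0$, the subcomplex inclusion
\[
    \mathscr D'(S, b_0, \tilde b_0, b_1)
    \subseteq
    \mathscr D(S, b_0, b_1)
\]
is an equality, as noted in the remark preceding \cref{thm:separating_arcs_high_conn}. Consequently, Theorem \ref{thm:biased_separating_arcs_high_conn}\cref{thm:biased_separating_arcs_high_conn:a} applies and yields that $S^{E_1}(S)$ is $\bigl(\lfloor \tfrac{h(S)-1}{2} \rfloor - 2\bigr)$-connected.

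Second, I would feed this estimate into the standard GKRW translation. The setup satisfies Assumptions 17.1 and 17.2 of \cite{GKRW19Ek} (as noted after the definition of $S^{E_1}$), so the machinery of \cite[\S17.2]{GKRW19Ek} applies: a $k$-connectivity estimate for $S^{E_1}(n)$ implies $H^{E_1}_{n,d}(R) = 0$ for $d \leq k + 1$. Since the connectivity bound we obtained depends only on $n = h(S)$ and not on the orientability type of $S$, it is uniform across each grading. Substituting $k = \lfloor \tfrac{n - 1}{2}\rfloor - 2$ yields vanishing of $H^{E_1}_{n,d}(R)$ in the range $d \leq \lfloor \tfrac{n-1}{2}\rfloor - 1$, which is exactly the claim.

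The proof is essentially bookkeeping at this stage — all the real content lies in Theorem \ref{thm:biased_separating_arcs_high_conn} and the identification Lemma \ref{lem:splitting_cpx_id}. The only thing that requires care is matching the degree shift conventions in the GKRW machine (the passage from $k$-connectivity to vanishing in degree $k+1$), but since the connectivity bound $\lfloor \tfrac{n-1}{2}\rfloor - 2$ and the target vanishing bound $\lfloor \tfrac{n-1}{2}\rfloor - 1$ differ by exactly one, this matches without room for ambiguity.
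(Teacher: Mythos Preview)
Your proposal is correct and follows essentially the same approach as the paper. The paper additionally cites \cref{lem:alg_model} to reconcile the $S$-algebra model of $R$ used here with the $\mathcal C_1^+$-algebra setup assumed in \cite[\S17]{GKRW19Ek}, and spells out the degree shift via the intermediate object $T^{E_1}(n)$ and the specific references \cite[Lem.~17.10, Cor.~17.4, Thm.~10.13]{GKRW19Ek}; your compressed summary ``$k$-connected $S^{E_1}(n)$ gives vanishing for $d \le k+1$'' is the net effect of that chain.
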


\begin{proof}
    By
        \cite[\lem{17.10}]{GKRW19Ek},
        \cref{thm:biased_separating_arcs_high_conn} (or \cref{thm:separating_arcs_high_conn}), and
        \cref{lem:splitting_cpx_id},
        $T^{E_1}(n)$ (see \cite[\defn{17.3}]{GKRW19Ek}) is
            $\floor{\frac{n - 1}{2}}$-connected.
    By \cite[\cor{17.4}]{GKRW19Ek}, and \cite[\thm{10.13}]{GKRW19Ek},
        $
            S^1 \smsh Q_\L^{E_1}(R)(n)
            =
            h_*(S^1 \smsh Q_\L^{E_1}(\underline *_{\neq U}))(n)
        $
        is $\floor{\frac{n - 1}{2}}$-connected as well.
    The difference in the definition of our $R$ from their $R$ is
        accounted for by \cref{lem:alg_model}.
    The statement follows.
\end{proof}

\begin{corollary}\label{cor:e2_vanishing_r}
    $
        H_{n, d}^{E_2}(R) = 0
    $
    for $d \leq \floor{\frac{n - 1}{2}} - 1$.
\end{corollary}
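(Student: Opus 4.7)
The plan is to repeat the proof of \cref{cor:e1_vanishing_r} in the $E_2$-setting, with the same connectivity estimate doing the work. Since $R$ is an $E_2$-algebra and the operad inclusion $E_1 \to E_2$ equips it with an underlying $E_1$-structure, one expects the connectivity bound of \cref{thm:biased_separating_arcs_high_conn} for the complex of separating arcs to control the $E_2$-splitting complex as well. The key point in the GKRW framework is that $S^{E_2}(S)$ is obtained from $S^{E_1}(S)$ by a join with configuration data coming from the little $2$-cubes operad (roughly, joining with a sphere's worth of braiding data), and joining with a nonempty space can only preserve or increase connectivity. Consequently, the identification $S^{E_1}(S) \cong \mathscr{D}(S, b_0, b_1)$ of \cref{lem:splitting_cpx_id} together with \cref{thm:biased_separating_arcs_high_conn} yields the same $\lfloor (n-1)/2\rfloor$-connectivity bound for $S^{E_2}(S)$.

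With this in hand, the $E_2$-analogs of \cite[\lem{17.10}]{GKRW19Ek}, \cite[\cor{17.4}]{GKRW19Ek}, and \cite[\thm{10.13}]{GKRW19Ek} should give that $S^1 \smsh Q_\L^{E_2}(R)(n)$ is $\lfloor (n-1)/2 \rfloor$-connected, which translates directly into the required vanishing $H^{E_2}_{n,d}(R) = 0$ for $d \leq \lfloor (n-1)/2 \rfloor - 1$.

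The main obstacle is pinpointing the correct $E_2$-version of the cited GKRW transfer statements, since the join-with-configurations description of $S^{E_2}(S)$ must be compatible with the braided monoidal structure on $\MCG$. A perhaps cleaner alternative would be to exploit the operad map $E_1 \to E_2$ to obtain a natural comparison $Q^{E_1}_\L(R) \to Q^{E_2}_\L(R)$ of derived indecomposables and argue (via the GKRW formalism) that it is surjective on homotopy groups in the relevant range; the induced surjection $H^{E_1}_{n,d}(R) \twoheadrightarrow H^{E_2}_{n,d}(R)$ would then transfer the vanishing of \cref{cor:e1_vanishing_r} to the $E_2$-setting essentially for free.
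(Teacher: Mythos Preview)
Your second route is essentially what the paper does, but you are missing the one hypothesis that actually makes it work. The paper's proof is a direct application of \cite[\thm{14.3}]{GKRW19Ek} with $l=1$, $k=2$, and $\rho(n) = \lfloor\frac{n-1}{2}\rfloor + 1$: this is precisely the transfer theorem from $E_1$-homology vanishing to $E_k$-homology vanishing. Its input is \cref{cor:e1_vanishing_r} together with a \emph{subadditivity} check on $\rho$,
\[
    \Bigl\lfloor\tfrac{n_1-1}{2}\Bigr\rfloor + 1 + \Bigl\lfloor\tfrac{n_2-1}{2}\Bigr\rfloor + 1
    \;\geq\;
    \Bigl\lfloor\tfrac{n_1+n_2-1}{2}\Bigr\rfloor + 1,
\]
which the paper verifies explicitly. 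The comparison map $Q^{E_1}_\L(R) \to Q^{E_2}_\L(R)$ is \emph{not} surjective on homotopy for free; the bar-spectral-sequence argument packaged in \cite[\thm{14.3}]{GKRW19Ek} genuinely uses subadditivity of the vanishing line to control the contribution of products of lower-degree classes, and without it the transfer can fail. So your ``cleaner alternative'' is the right strategy, but you must name the theorem and check its hypotheses rather than assert a surjection.

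Your first route, via an ``$E_2$-splitting complex'', does not match the GKRW setup. Section~17 of \cite{GKRW19Ek} develops splitting-complex technology specifically for $E_1$-algebras arising from monoidal groupoids; there are no off-the-shelf $E_2$-analogues of 17.4 or 17.10 to invoke, and the join-with-configuration-space heuristic is not how the $E_1$-to-$E_2$ transfer is organized in that framework. The correct mechanism is the one in \S14.2, which is exactly what the paper cites.
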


\begin{proof}
    Apply \cite[\thm{14.3}]{GKRW19Ek} with
        $l = 1$,
        $k = 2$, and
        $\rho(n) = \floor{\frac{n - 1}{2}} + 1$.
    The first assumption of said theorem follows from the inequality
        \[
            \floor{\frac{n_1 - 1}{2}} + 1
            +
            \floor{\frac{n_2 - 1}{2}} + 1
            \geq
            \floor{\frac{n_1 + n_2 - 1}{2}} + 1.
        \]
    The second assumption of said theorem follows from \cref{cor:e1_vanishing_r}.
    The statement follows.
\end{proof}


\section{Identification of the secondary stabilization map}
\label{sec:map}

The purpose of this section is to construct a graded simplicial set,
    the homology of which measures the failure of secondary stability.
This object will be the homotopy cofiber of
    a certain homotopy theoretic refinement of the secondary stabilization map
        of \cref{subsec:secondary_stabilization_map}.

We use a general setting.
Let $(\mathcal G, \oplus, 0)$ be a symmetric monoidal groupoid
    (for our purposes, $(\mathcal G, \oplus, 0) = (\N_0, +, 0)$ suffices),
let $(\mathcal S, \otimes, U)$ be a convenient symmetric monoidal, simplicial model category
    subject to the axioms of \cite[2.1 and 7.1]{GKRW19Ek}
    (for example, $(\mathcal S, \otimes, U) = (\SSet, \times, *)$), and
let
    $
        A \in \Alg_{E_2}(\mathcal S^{\mathcal G})
    $
    be an $E_2$-algebra.
Endow $\mathcal S^{\mathcal G}$ with the projective model structure
    and the Day convolution monoidal product.
For any $g \in \mathcal G$,
    define
    $
        U_g \defeq g_* U
        \in \mathcal S^{\mathcal G},
    $
    the object with
        $U_g(x) = \emptyset$ for $x \not\cong g$ and
        $U_g(x) = U$ for $x \cong g$.
For any
    two objects $|x|, |y| \in \mathcal G$,
    maps
    $
        x : U_{|x|} \to U^{E_2} A,
    $
    $
        y : U_{|y|} \to U^{E_2} A
    $
    in $\mathcal S^{\mathcal G}$,
    and
    a right $A$-module
    $
        M \in \Alg_{\otimes A}(\mathcal S^{\mathcal G})
    $
    with cofibrant and fibrant underlying object in $\mathcal S^{\mathcal G}$,
    we would like to construct a quotient $M \sslash (x, y)$
        that, morally, kills $x, y$ from the homology of $M$.
One general construction of this sort is given in \cite[12.2.3]{GKRW19Ek}.
Unfortunately, their construction is somewhat inexplicit,
    which in our application makes it difficult
        to obtain an explicit description of the secondary stabilization map
        as in \cref{subsec:secondary_stabilization_map}.
To overcome this issue,
    we give a very concrete, albeit less general, construction.


Since $A$ is an $E_2$-algebra,
    there is a homotopy coherent square in $\mathcal S^{\mathcal G}$,
    \begin{equation}\label{dia:htpy_com}
        \begin{tikzcd}
            U^A M \otimes U_{|x|} \otimes U_{|y|}
            \ar[r, "\cdot x"]
            \ar[d, "\cdot y"]
                & U^A M \otimes U_{|y|}
                \ar[d, "\cdot y"]
            \\
            U^A M \otimes U_{|x|}
            \ar[r, "\cdot x"]
                & U^A M,
        \end{tikzcd}
    \end{equation}
    where the structure homotopy is the clockwise elementary braid (half rotation) of $x$ and $y$,
        that is, it is induced via the $A$-module structure on $M$ by the composite
        \[
            \Delta^1 \otimes U_{|x|} \otimes U_{|y|}
            \tox{\alpha \otimes x \otimes y}
            B(2) \otimes U^{E_2} A \otimes U^{E_2} A
            \to
            U^{E_2} A,
        \]
        where
            $\alpha$ is the map $\Delta^1 \to N\Braid_2$
                classifying the elementary braid $e_1 \in B_2$, and
            the second map is the $B$-algebra structure map for $A$.

\begin{definition}
    Define
        $
            M \sslash y
            \defeq
            \hocofib(
                M \otimes U_{|y|}
                \to
                M
            )
            \in \Ho(\mathcal S_*^{\mathcal G}).
        $
    The homotopy coherent square \cref{dia:htpy_com} induce
        a (homotopy class of a) map of homotopy cofibers
        \[
            \cdot x :
                M \sslash y \otimes U_{|x|}
                \to
                M \sslash y
        \]
    Define
        $
            M \sslash (x, y)
            \in \Ho(\mathcal S_*^{\mathcal G})
        $
        to be the homotopy cofiber of this map.
    Since the homotopy coherent square \cref{dia:htpy_com} is functorially associated to $M$,
        this defines a functor
        \[
            \blank \sslash (x, y) :
                \Alg_{\otimes A}(\mathcal S^{\mathcal G})
                \to
                \Ho(\mathcal S_*^{\mathcal G}).
        \]
\end{definition}

Recall that we fixed models $N_{1, 1}, S_{1, 1} \in \MCG$
    in \cref{subsec:secondary_stabilization_map}.
For a surface $S \in \MCG$,
    let $\MCG_{\cong S}$ denote the subcategory of those objects isomorphic to $S$.
A homotopy coherent square $\square$ \emph{rectifies} to a strict square $\square'$
    if there is a zig-zag of weak equivalences of homotopy coherent squares
        (that is, pointwise weak equivalences which respect the structure homotopies)
        from $\square$ to $\square'$.

\begin{proposition}\label{prop:rectification}
    Let $S \in \MCG$.
    Then the homotopy coherent square
        \begin{equation}\label{dia:conf_clockwise_braid}
            \begin{tikzcd}[column sep = huge]
                N\MCG_{\cong S}
                \ar[r, "\oplus N_{1, 1}"]
                \ar[d, "\oplus S_{1, 1}"]
                    & N\MCG_{\cong S \oplus N_{1, 1}}
                    \ar[d, "\oplus S_{1, 1}"]
                \\
                N\MCG_{\cong S \oplus S_{1, 1}}
                \ar[r, "\oplus N_{1, 1}"]
                    & N\MCG_{\cong S \oplus N_{1, 1} \oplus S_{1, 1}}
            \end{tikzcd}
        \end{equation}
        in $\SSet$
        with structure homotopy induced by the braiding natural isomorphism
        rectifies to the nerve functor $N$ applied to
            the strict square \cref{dia:stabilization_up_to_braiding}.
\end{proposition}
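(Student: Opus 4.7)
The plan is to exhibit a single weak equivalence of homotopy coherent squares from $N$ applied to \cref{dia:stabilization_up_to_braiding} into \cref{dia:conf_clockwise_braid}, by recognizing each as the nerve of a square of groupoids (strict in the former case, commuting up to a natural isomorphism in the latter) and restricting the latter to one-object subgroupoids. The rectification then reduces to a 2-categorical statement at the groupoid level.

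First, I identify \cref{dia:conf_clockwise_braid} as $N$ applied to the square $\mathcal{C}$ of groupoids whose corners are the full subgroupoids $\MCG_{\cong T} \subseteq \MCG$, whose edges are the monoidal-product functors $(\blank) \oplus N_{1,1}$ and $(\blank) \oplus S_{1,1}$, and whose filling 2-cell is the braiding natural isomorphism $\id \oplus \beta$. Because $\MCG_{\cong T}$ depends only on the isomorphism class of $T$, both composites around $\mathcal{C}$ land in the single subgroupoid $\MCG_{\cong S \oplus N_{1,1} \oplus S_{1,1}} = \MCG_{\cong S \oplus S_{1,1} \oplus N_{1,1}}$, so $\id \oplus \beta$ is an honest natural isomorphism between these composites.

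Next, for each corner I restrict to the one-object full subgroupoid $B\Gamma(T) \hookrightarrow \MCG_{\cong T}$ on the chosen representative, which is an equivalence of groupoids. The top, right, and left edges of $\mathcal{C}$ restrict strictly. The bottom edge $(\blank) \oplus N_{1,1}$, however, sends $S \oplus S_{1,1}$ to $S \oplus S_{1,1} \oplus N_{1,1}$, not to the chosen bottom-right representative $S \oplus N_{1,1} \oplus S_{1,1}$; composing with the conjugation functor $\Gamma(\id_S \oplus \beta)$ corrects this, and naturality of $\beta$ forces the resulting square of one-object groupoids to commute strictly. By inspection, this strict square is precisely \cref{dia:stabilization_up_to_braiding}.

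Finally, the corner inclusions $B\Gamma(T) \hookrightarrow \MCG_{\cong T}$ assemble into a pointwise-equivalent map of homotopy-coherent squares from $N$ applied to \cref{dia:stabilization_up_to_braiding} (equipped with its trivial 2-cell) into \cref{dia:conf_clockwise_braid}. The main point to verify is the compatibility of 2-cells: the extra conjugation by $\id_S \oplus \beta$ built into the bottom edge of \cref{dia:stabilization_up_to_braiding} is exactly what is needed to absorb the natural isomorphism $\id \oplus \beta$ once everything is pulled back to the one-object subgroupoids. This coherence check — essentially the defining property of conjugation in a groupoid — is the main, though routine, obstacle.
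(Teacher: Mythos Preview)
Your strategy is the same as the paper's in spirit, but the final step does not go through as stated. In the paper's sense, a weak equivalence of homotopy coherent squares is given by maps on the four corners which commute \emph{strictly} with the four edge maps and intertwine the two structure homotopies. Your corner inclusions $B\Gamma(T) \hookrightarrow \MCG_{\cong T}$ fail this on the bottom edge: the bottom edge of \cref{dia:stabilization_up_to_braiding} is $\Gamma(\id_S \oplus \beta)\circ(\blank\oplus \id_{N_{1,1}})$, which sends $f$ to an automorphism of $S\oplus N_{1,1}\oplus S_{1,1}$, whereas the bottom edge of \cref{dia:conf_clockwise_braid} sends the same $f$ to $f\oplus \id_{N_{1,1}}$, an automorphism of the \emph{different} object $S\oplus S_{1,1}\oplus N_{1,1}$. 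These do not agree after inclusion, so there is no single strict map of homotopy coherent squares in the direction you describe. The ``coherence check'' you allude to is exactly the obstruction, not a routine verification.

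The paper's proof handles this by inserting an intermediate square whose bottom-right corner is the full subgroupoid on the \emph{two} objects $S\oplus N_{1,1}\oplus S_{1,1}$ and $S\oplus S_{1,1}\oplus N_{1,1}$, with the restricted structure homotopy. From this intermediate square there is a strict map to \cref{dia:conf_clockwise_braid} by inclusion of full subgroupoids, and a strict map to the nerve of \cref{dia:stabilization_up_to_braiding} by retracting the two-object groupoid onto the single object $S\oplus N_{1,1}\oplus S_{1,1}$ via $\id_S\oplus\beta$. This retraction is precisely what turns the bottom edge into the conjugated form and absorbs the structure homotopy into the trivial one. Your argument can be repaired by introducing this intermediate square, yielding a zig-zag rather than a single map.
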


\begin{proof}
    Consider the zig-zag of homotopy coherent squares,
        \vspace{-1.5em}
        \begin{center}
            \begin{tikzpicture}
                \node (1) at (0, 0) {\cref{dia:conf_clockwise_braid}};
                \node (2) at (0, -2) {$\left[
                    \begin{tikzcd}[column sep = huge, ampersand replacement = \&]
                        N \Aut_\MCG(S)
                        \ar[rd, phantom, "\squared{1}"]
                        \ar[r]
                        \ar[d]
                            \& N \Aut_\MCG(S \oplus N_{1, 1})
                            \ar[d]
                        \\
                        N \Aut_\MCG(S \oplus S_{1, 1})
                        \ar[r]
                            \& N \MCG\<S \oplus N_{1, 1} \oplus S_{1, 1}, S \oplus S_{1, 1} \oplus N_{1, 1}\>
                    \end{tikzcd}
                \right]$};
                \node (3) at (0, -5) {$\left[
                    \begin{tikzcd}[column sep = huge, ampersand replacement = \&]
                        N \Aut_\MCG(S)
                        \ar[rd, phantom, "\squared{2}"]
                        \ar[r]
                        \ar[d]
                            \& N \Aut_\MCG(S \oplus N_{1, 1})
                            \ar[d]
                        \\
                        N \Aut_\MCG(S \oplus S_{1, 1})
                        \ar[r]
                            \& N \Aut_\MCG(S \oplus N_{1, 1} \oplus S_{1, 1})
                    \end{tikzcd}
                \right].$};
                \draw[->] (2)--(1) node [midway, left] {$\simeq$}
                                   node [midway, right] {$(1)$};
                \draw[->] (2)--(3) node [midway, left] {$\simeq$}
                                   node [midway, right] {$(2)$};
            \end{tikzpicture}
        \end{center}
    Here, $\MCG\<S \oplus N_{1, 1} \oplus S_{1, 1}, S \oplus S_{1, 1} \oplus N_{1, 1}\>$
        denotes the full subgroupoid of $\MCG$ generated by
        $
            S \oplus N_{1, 1} \oplus S_{1, 1}
        $
        and
        $
            S \oplus S_{1, 1} \oplus N_{1, 1}.
        $
    $(1)$ is induced by essentially surjective inclusions of full subgroupoids
        hence is a weak equivalence.
    $\squared{1}$ has as structure homotopy
        the restriction of \cref{dia:conf_clockwise_braid}'s structure homotopy.
    $\squared{2}$ is $N$ applied to the strict square \cref{dia:stabilization_up_to_braiding}.
    $(2)$ retracts the lower right groupoid
        $
            \MCG\<S \oplus N_{1, 1} \oplus S_{1, 1}, S \oplus S_{1, 1} \oplus N_{1, 1}\>
        $
        to
        its full subgroupoid on the object $S \oplus N_{1, 1} \oplus S_{1, 1}$,
        mapping $S \oplus N_{1, 1} \oplus S_{1, 1}$ and automorphisms thereof identically, and
        mapping $S \oplus S_{1, 1} \oplus N_{1, 1}$ to $S \oplus N_{1, 1} \oplus S_{1, 1}$ and
            automorphisms of $S \oplus S_{1, 1} \oplus N_{1, 1}$ to
            automorphisms of $S \oplus N_{1, 1} \oplus S_{1, 1}$ by means of the braiding $\beta$.
\end{proof}

Set $M \defeq R$ and
    fix maps
    \begin{align*}
        x \defeq \tilde\sigma :
            U_1
            &\to
            R,
        \\
        y \defeq \tilde\tau :
            U_2
            &\to
            R
    \end{align*}
    classifying the fixed models $N_{1, 1}, S_{1, 1} \in \MCG$ with length parameter $1$.
In this setting, the homotopy coherent square \cref{dia:htpy_com} takes the form
    \begin{equation}\label{dia:htpy_com_r}
        \begin{tikzcd}
            R \otimes U_1 \otimes U_2
            \ar[r, "\cdot \tilde\sigma"]
            \ar[d, "\cdot \tilde\tau"]
                & R \otimes U_2
                \ar[d, "\cdot \tilde\tau"]
            \\
            R \otimes U_1
            \ar[r, "\cdot \tilde\sigma"]
                & R
        \end{tikzcd}
    \end{equation}
    in $\SSet^{\N_0}$.

\begin{lemma}\label{lem:secondary_stabilization_map_id}
    Let $g \geq 4$.
    Then the map
        \[
            \blank \cdot \tilde\sigma :
                \tilde H_{g, d}(R \sslash \tilde\tau \otimes U_1)
                \to
                \tilde H_{g, d}(R \sslash \tilde\tau)
        \]
        induced by \cref{dia:htpy_com_r}
        identifies with the secondary stabilization map
            (cf.~\cref{def:map1})
        \[
            H_d(N\Gamma_{g - 1, 1}, N\Gamma_{g - 3, 1})
            \oplus
            H_d(\Gamma_{(g - 1)/2, 1}, \Gamma_{(g - 3)/2, 1})
            \to
            H_d(N\Gamma_{g, 1}, N\Gamma_{g - 2, 1})
        \]
        when $g$ is odd, and
        the composite of the secondary stabilization map and the inclusion
        \begin{align*}
            H_d(N\Gamma_{g - 1, 1}, N\Gamma_{g - 3, 1})
            &\to
            H_d(N\Gamma_{g, 1}, N\Gamma_{g - 2, 1})
            \\
            &\to
            H_d(N\Gamma_{g, 1}, N\Gamma_{g - 2, 1})
            \oplus
            H_d(\Gamma_{g/2, 1}, \Gamma_{g/2 - 1, 1})
        \end{align*}
        when $g$ is even.
\end{lemma}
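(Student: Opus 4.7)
The plan is to reduce the identification of the map
$\cdot \tilde\sigma \colon \tilde H_{g,d}(R \sslash \tilde\tau \otimes U_1) \to \tilde H_{g,d}(R \sslash \tilde\tau)$
to the rectification already established in \cref{prop:rectification}, which compares the homotopy coherent square \cref{dia:htpy_com_r} to the strict squares \cref{dia:stabilization_up_to_braiding} used in \cref{def:map1}.

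First, I would unwind the two-fold homotopy cofiber: by definition of $\sslash$ the map $\cdot \tilde\sigma$ is obtained by taking the horizontal homotopy cofibers of \cref{dia:htpy_com_r} restricted to gradings $(g-3,g-2,g-1,g)$ and passing to the induced vertical map. On path components, $R(n)$ splits as $BN\Gamma_{n,1}$ when $n$ is odd and as $BN\Gamma_{n,1} \sqcup B\Gamma_{n/2,1}$ when $n$ is even. Boundary sum with $\tilde\sigma = N_{1,1}$ always maps into a nonorientable component, while boundary sum with $\tilde\tau = S_{1,1}$ preserves orientability. Consequently the restricted square decomposes as a disjoint union of squares indexed by the path components of $R(g-3)$, each of which has the form \cref{dia:conf_clockwise_braid} with $S = N_{g-3,1}$ and, when $g$ is odd, additionally with $S = S_{(g-3)/2,1}$.

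Next, I would apply \cref{prop:rectification} to each of these subsquares, replacing them by the nerves of the strict squares \cref{dia:stabilization_up_to_braiding}. Since homotopy cofibers are homotopy invariant, the horizontal homotopy cofibers compute the relative singular chain complexes of the pairs appearing in \cref{def:map1}, and the induced vertical map on reduced homology is by construction the map on relative homology there. In the orientable branch I would invoke \cref{eqn:orientable_secondary_iso} to rewrite
\[
H_d\bigl(\Gamma(S_{(g-3)/2,1} \oplus N_{1,1} \oplus S_{1,1}), \Gamma(S_{(g-3)/2,1} \oplus N_{1,1})\bigr)
\]
as $H_d(N\Gamma_{g,1}, N\Gamma_{g-2,1})$, which identifies the target with the expected summand of the statement and completes the matching with \cref{def:map1}.

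The main subtle point will be the case when $g$ is even: then $R(g-1)$ is purely nonorientable and contributes only a single summand, while the target $R(g)$ retains an orientable component $B\Gamma_{g/2,1}$ that $\cdot\tilde\sigma$ cannot hit. The induced map on $\tilde H_{g,d}$ therefore factors through the $H_d(N\Gamma_{g,1},N\Gamma_{g-2,1})$-summand, giving precisely the composite of the secondary stabilization map with the inclusion into the direct sum recorded in the lemma. Beyond this parity bookkeeping, the remaining work amounts to tracing the isomorphisms \cref{eqn:orientable_nonorientable_sum_identification} through the homotopy cofiber construction, which is routine once the square has been rectified.
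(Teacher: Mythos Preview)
Your proposal is correct and follows essentially the same approach as the paper: evaluate the square \cref{dia:htpy_com_r} at grading $g$, decompose it according to the orientable/nonorientable path components, and invoke \cref{prop:rectification} on each piece to identify the induced map on relative homology with \cref{def:map1}. Two small expository slips are worth fixing: the map $\cdot\tilde\sigma$ is obtained by taking \emph{vertical} cofibers (those of $\cdot\tilde\tau$) and then the induced \emph{horizontal} map, not the other way around; and for odd $g$ the square is not literally a disjoint union of two squares (the right-hand column is shared), though your argument only needs the restriction to each source component, which is what you actually use.
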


\begin{proof}
    Evaluated on an odd $g \geq 4$,
        \cref{dia:htpy_com_r} is
        \[
            \begin{tikzcd}
                N\MCG_{\cong N_{g - 3, 1}}
                \sqcup
                N\MCG_{\cong S_{(g - 3)/2, 1}}
                \ar[r, "\oplus N_{1, 1}"]
                \ar[d, "\oplus S_{1, 1}"]
                    & N\MCG_{\cong N_{g - 2, 1}}
                    \ar[d, "\oplus S_{1, 1}"]
                \\
                N\MCG_{\cong N_{g - 1, 1}}
                \sqcup
                N\MCG_{\cong S_{(g - 1)/2, 1}}
                \ar[r, "\oplus N_{1, 1}"]
                    & N\MCG_{\cong N_{g, 1}},
            \end{tikzcd}
        \]
        where the structure homotopy is given by the braiding.
    Evaluated on an even $g \geq 4$,
        \cref{dia:htpy_com_r} is
        \[
            \left[
                \begin{tikzcd}
                    N\MCG_{\cong N_{g - 3, 1}}
                    \ar[r, "\oplus N_{1, 1}"]
                    \ar[d, "\oplus S_{1, 1}"]
                        & N\MCG_{\cong N_{g - 2, 1}}
                        \ar[d, "\oplus S_{1, 1}"]
                    \\
                    N\MCG_{\cong N_{g - 1, 1}}
                    \ar[r, "\oplus N_{1, 1}"]
                        & N\MCG_{\cong N_{g, 1}}
                \end{tikzcd}
            \right]
            \sqcup
            \left[
                \begin{tikzcd}[column sep = tiny, ampersand replacement = \&]
                    \emptyset
                    \ar[r]
                    \ar[d]
                        \& N\MCG_{\cong S_{g/2 - 1, 1}}
                        \ar[d, "\oplus S_{1, 1}"]
                    \\
                    \emptyset
                    \ar[r]
                        \& N\MCG_{\cong S_{g/2, 1}}
                \end{tikzcd}
            \right],
        \]
        where the structure homotopy of the first summand
            is again given by the clockwise half Dehn twist.
    The statement now follows from \cref{prop:rectification}.
\end{proof}

\begin{lemma}\label{lem:secondary_stabilization_map_id_2}
    Let $g \geq 4$ be an even integer.
    Then the map
        \[
            \blank \cdot \tilde\tau :
                \tilde H_{g, d}(R \sslash \tilde\sigma \otimes U_2)
                \to
                \tilde H_{g, d}(R \sslash \tilde\sigma)
        \]
        induced by \cref{dia:htpy_com_r}
        identifies with the map
        \[
            H_d(N\Gamma_{g - 2, 1}, N\Gamma_{g - 3, 1})
            \oplus
            H_d(\Gamma_{(g - 2)/2, 1})
            \to
            H_d(N\Gamma_{g, 1}, N\Gamma_{g - 1, 1})
            \oplus
            H_d(\Gamma_{g/2, 1})
        \]
        induced by
            the secondary stabilization map (cf.~\cref{def:map2}) and
            the torus hole stabilization map (cf.~\cref{sec:introduction}).
\end{lemma}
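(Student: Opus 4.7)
The plan is to follow the same approach as \cref{lem:secondary_stabilization_map_id}, but with the two cofiber directions interchanged: I would take horizontal homotopy cofibers of \cref{dia:htpy_com_r} first (killing $\tilde\sigma$, i.e., crosscaps) and then analyze the induced vertical map (multiplication by $\tilde\tau$, i.e., attaching torus holes).

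First, I would evaluate \cref{dia:htpy_com_r} at grade $g$ for an even $g \geq 4$. Since $g-3$ and $g-1$ are odd while $g-2$ and $g$ are even, the square takes the form
\[
\begin{tikzcd}[column sep = large]
N\MCG_{\cong N_{g-3,1}}
\ar[r, "\cdot \tilde\sigma"]
\ar[d, "\cdot \tilde\tau"]
    & N\MCG_{\cong N_{g-2,1}} \sqcup N\MCG_{\cong S_{(g-2)/2,1}}
    \ar[d, "\cdot \tilde\tau"]
\\
N\MCG_{\cong N_{g-1,1}}
\ar[r, "\cdot \tilde\sigma"]
    & N\MCG_{\cong N_{g,1}} \sqcup N\MCG_{\cong S_{g/2,1}}
\end{tikzcd}
\]
with structure homotopy given by the clockwise braiding of $N_{1,1}$ and $S_{1,1}$. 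Because the horizontal maps add a crosscap and hence preserve nonorientability, their images lie entirely in the nonorientable summands on the right-hand side, and nothing from the left column reaches the orientable summands. Therefore the square splits as a disjoint union of two homotopy coherent subsquares: a purely nonorientable square of the shape of \cref{dia:conf_clockwise_braid} with $S = N_{g-3,1}$, and an ``incomplete'' square with empty left column whose right-hand column is the map $\oplus S_{1,1} : N\MCG_{\cong S_{(g-2)/2,1}} \to N\MCG_{\cong S_{g/2,1}}$.

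Next, I would take horizontal homotopy cofibers of each summand. For the incomplete square, the cofibers are $(N\MCG_{\cong S_{(g-2)/2,1}})_+$ and $(N\MCG_{\cong S_{g/2,1}})_+$, and the induced vertical map $\oplus S_{1,1}$ descends to the torus hole stabilization $H_d(\Gamma_{(g-2)/2, 1}) \to H_d(\Gamma_{g/2, 1})$ on reduced homology. For the nonorientable subsquare, I would invoke \cref{prop:rectification} to rectify to the nerve of the strict square \cref{dia:stabilization_up_to_braiding} with $S = N_{g-3,1}$. Its horizontal cofibers compute $H_d(N\Gamma_{g-2,1}, N\Gamma_{g-3,1})$ on top and $H_d(\Gamma(N_{g-2,1} \oplus S_{1,1}), \Gamma(N_{g-3,1} \oplus S_{1,1}))$ on bottom; the latter identifies with $H_d(N\Gamma_{g,1}, N\Gamma_{g-1,1})$ by the same argument underlying \cref{eqn:absorb_torus_hole}, since any choices of homeomorphisms $N_{g-3,1} \oplus S_{1,1} \cong N_{g-1,1}$ and $N_{g-2,1} \oplus S_{1,1} \cong N_{g,1}$ are ambiguous only up to inner automorphism and hence yield well-defined isomorphisms on relative homology. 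The induced vertical map between these cofibers is, by construction, the secondary stabilization map of \cref{def:map2}.

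The main thing to verify will be that the rectification of \cref{prop:rectification} is compatible with taking horizontal cofibers. I expect this to follow formally: the zig-zag in that proposition consists of pointwise weak equivalences of homotopy coherent squares, and homotopy cofibers preserve such equivalences. Together with the evident compatibility of the orientable/nonorientable decomposition with cofibers, this will complete the proof.
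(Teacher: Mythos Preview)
Your proposal is correct and follows essentially the same route as the paper's proof, which is a one-line reduction to \cref{lem:secondary_stabilization_map_id}. The only point the paper makes explicit that you leave implicit is that the rectified square \eqref{dia:stabilization_up_to_braiding} for $S = N_{g-3,1}$ is isomorphic to the square \eqref{dia:stabilization_up_to_braiding_2} flipped along the diagonal; this is what justifies your phrase ``by construction'' when matching the induced vertical map on horizontal cofibers with the map of \cref{def:map2}, since the latter is defined via \eqref{dia:stabilization_up_to_braiding_2} (with the braiding $\beta^{-1}$) rather than \eqref{dia:stabilization_up_to_braiding}.
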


\begin{proof}
    This follows from the proof of \cref{lem:secondary_stabilization_map_id} using that
        the square \cref{dia:stabilization_up_to_braiding} for $S = N_{g - 3, 1}$
        is isomorphic to
        the square \cref{dia:stabilization_up_to_braiding_2} flipped along the diagonal.
\end{proof}


\section{Low-dimensional calculations}
\label{sec:calculation}

\subsection{Path components}

Let $FX$ denote the free commutative semigroup on a set $X$.

\begin{proposition}\label{prop:pi_0_r}
    The natural map of commutative semigroups
        \[
            F\{S_{1, 1}, N_{1, 1}\}
                / (S_{1, 1} \oplus N_{1, 1} = N_{1, 1}^{\oplus 3})
            \to
            \pi_{*, 0}(R)
        \]
        is an isomorphism.
\end{proposition}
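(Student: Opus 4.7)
The plan is to identify $\pi_{*, 0}(R)$ with the semigroup of isomorphism classes of objects of $\MCG$ (excluding the removed unit) and then verify the presentation via explicit normal forms. Since $R(n)$ is the nerve of the full subgroupoid $\MCG_{h = n}$ for $n \geq 1$ and $R(0) = \emptyset$, the set $\pi_{n, 0}(R)$ is the set of isomorphism classes of objects of $h$-degree $n$, with semigroup operation induced by $\oplus$. The length parameter $t$ is irrelevant to isomorphism type, so $\pi_{*, 0}(R)$ is in bijection with $\{[N_{g, 1}] : g \geq 1\} \sqcup \{[S_{h, 1}] : h \geq 1\}$, graded respectively by $g$ and $2h$.

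Next I would check that the relation holds in the target. The boundary sum $N_{1, 1} \oplus S_{1, 1}$ is a connected nonorientable compact surface with one boundary component and Euler characteristic $\chi(N_{1, 1}) + \chi(S_{1, 1}) - 1 = -2 = \chi(N_{3, 1})$, so by the classification of compact surfaces it is diffeomorphic to $N_{3, 1} = N_{1, 1}^{\oplus 3}$. Hence the map is well-defined. Surjectivity is then immediate: $[N_{g, 1}]$ and $[S_{h, 1}]$ are images of the iterated boundary sums $N_{1, 1}^{\oplus g}$ and $S_{1, 1}^{\oplus h}$.

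For injectivity I would compute explicit normal forms on the left-hand side. Using the relation, any monomial $N_{1, 1}^{\oplus a} \oplus S_{1, 1}^{\oplus b}$ with $a, b \geq 1$ rewrites iteratively as $N_{1, 1}^{\oplus (a + 2b)}$, so every nonidentity element has a unique normal form of the form $N_{1, 1}^{\oplus a}$ with $a \geq 1$ or $S_{1, 1}^{\oplus b}$ with $b \geq 1$. These map to pairwise non-isomorphic surfaces: orientability distinguishes the two families, and within each family the grading $h$ separates different powers. I do not anticipate a serious obstacle; the only subtle point is confirming that the single listed relation captures all identifications between iterated boundary sums of the two generators, which follows from the classification of surfaces invoked in the previous paragraph.
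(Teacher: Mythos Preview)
Your proposal is correct and follows essentially the same approach as the paper: the paper's proof is a one-liner invoking the construction of $R$ together with additivity of Euler characteristic under boundary sum, and you have simply unpacked this into the explicit identification of $\pi_{*,0}(R)$ with isomorphism classes of surfaces, the verification of the relation via the classification of compact surfaces, and the normal-form argument for injectivity.
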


\begin{proof}
    This follows from the construction of $R$
        using that Euler characteristic is additive with respect to boundary sum.
\end{proof}

\subsection{Review of known unstable homology groups of $N\Gamma_{g, r}$}
\label{subsec:homology_calculation}

Aside from the stable homology groups of $N\Gamma_{g, 1}$,
    some of which were computed by Randal-Williams \cite{RandalWilliams08},%
    \footnote{
        Randal-Williams considers $N\Gamma_{\infty, 0}$,
            but this has the same homology as $N\Gamma_{\infty, 1}$ by \cite[\thm{A(3)}]{Wahl07}.
    }
    not many homology groups beyond $H_1$ are known.
This subsection is a review of these unstable calculations.

\begin{proposition}\label{prop:n_gamma_1_1}
    $N\Gamma_{1, 1} = 0$.
\end{proposition}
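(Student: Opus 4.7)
The plan is to prove that any homeomorphism $f$ of $N_{1,1}$ fixing $\partial N_{1,1}$ pointwise is isotopic, rel boundary, to the identity. The strategy is the standard "cut and apply Alexander" reduction: exhibit a properly embedded arc $\alpha$ whose complement is a disk, isotope $f$ so it preserves $\alpha$, and conclude via Alexander's trick on the resulting disk.

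First, I would fix a convenient model. Realize $N_{1,1}$ as $[0,1]^2/\!\sim$ with $(0,y) \sim (1, 1-y)$. The single boundary circle is the image of $[0,1] \times \{0, 1\}$, and the vertical segment $\{1/2\} \times [0,1]$ descends to a properly embedded arc $\alpha \subset N_{1,1}$ with both endpoints on $\partial N_{1,1}$. Cutting $N_{1,1}$ along $\alpha$ produces a square, i.e.\ a disk. Since $f$ fixes $\partial N_{1,1}$ pointwise, $f$ fixes both endpoints of $\alpha$.

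Second, I would show that $f(\alpha)$ is isotopic to $\alpha$ rel endpoints. Up to isotopy, $\alpha$ is characterized among properly embedded arcs with the two fixed endpoints by the property that its complement is a disk (equivalently, $\alpha$ is essential and two-sided with disk complement in the once-punctured $\RP^2$). Standard innermost-disk/bigon surgery on $\alpha$ and $f(\alpha)$, as in Epstein's work \cite{Epstein1966}, reduces intersections until the two arcs coincide, producing an ambient isotopy of $N_{1,1}$ carrying $f(\alpha)$ to $\alpha$ and fixing $\partial N_{1,1}$ throughout. Composing $f$ with this isotopy, we may assume $f(\alpha) = \alpha$, and a further isotopy along $\alpha$ arranges that $f$ restricts to the identity on a neighborhood of $\alpha$.

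Finally, cutting along $\alpha$, the map $f$ descends to a self-homeomorphism of a disk that fixes the boundary pointwise. By Alexander's trick this is isotopic to the identity rel $\partial$, and the isotopy glues back to an isotopy of $f$ to $\id_{N_{1,1}}$ rel $\partial N_{1,1}$. Hence $N\Gamma_{1,1} = 0$. The main obstacle is the uniqueness-of-arc step: one must rule out the one-sided essential arc (whose complement is an annulus, not a disk) and genuinely pin down the isotopy class of $\alpha$. Given the low complexity of $N_{1,1}$, however, this is tractable by the intersection-minimization argument cited above, and one could alternatively invoke the classification of surfaces of Euler characteristic $0$ directly.
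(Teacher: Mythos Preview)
Your proposal is correct and follows essentially the same strategy as the paper: pick a properly embedded arc whose complement is a disk, show the homeomorphism preserves its isotopy class, and finish with Alexander's trick. The paper uses a one-sided arc $c$ from a single boundary point to itself and pins down its isotopy class via a short $\pi_1$ argument (since $\phi$ fixes $\partial N_{1,1}$ it fixes $2 \in \pi_1(N_{1,1}) \cong \Z$, hence $\phi_* = \id$, so $\phi c \simeq c$, and then Epstein \cite{Epstein1966} upgrades homotopy to isotopy), whereas you use a two-sided arc between two distinct boundary points and argue directly by bigon removal; both routes invoke Epstein and land in the same place. One small remark: your parenthetical about ruling out ``the one-sided essential arc (whose complement is an annulus)'' is slightly off --- the one-sided arc in $N_{1,1}$ from a boundary point to itself also has disk complement (indeed, that is exactly the arc the paper uses), so the real content of your uniqueness step is simply that $f(\alpha)$ and $\alpha$ are homotopic rel endpoints, which the $\pi_1$ argument makes immediate.
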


\begin{proof}
    Consider a parameterized one-sided arc
        $
            c : I \to N_{1, 1}
        $
        from some $b_0 \in \del N_{1, 1}$ to itself.
    Since its complement is $S_{0, 1}$ and $\Gamma_{0, 1} = 0$
        (the Alexander trick \cite[\lem{2.1}]{FM11}),
        the isotopy class of a boundary-fixing diffeomorphism
        $
            \phi : N_{1, 1} \to N_{1, 1}
        $
        is determined by the isotopy class of $\phi c$.
    Since the boundary is fixed,
        $\phi$ induces the identity
        $
            \Z \to \Z
        $
        on $\pi_1$
        (it must send $2 \mapsto 2$).
    Therefore, $c \simeq \phi c$,
        which implies that $c$ and $\phi c$ are isotopic by \cite[3.1]{Epstein1966},
        and thus $\phi$ is isotopic to the identity.
\end{proof}

\begin{definition}
    Fix a model $N_{1, 1} \in \MCG$ for the Möbius strip.
    Write $N_{g, 1} \defeq N_{1, 1}^{\oplus g}$.
    The \emph{crosscap transposition}
        of the the pair of crosscaps $(i, i + 1)$ on $N_{g, 1}$
        is the mapping class
        \[
            \id_{N_{i, 1}} \oplus \beta \oplus \id_{N_{g - i - 2, 1}} :
                N_{i, 1} \oplus N_{1, 1}^{\oplus 2} \oplus N_{g - i - 2, 1}
                \tox{\cong}
                N_{i, 1} \oplus N_{1, 1}^{\oplus 2} \oplus N_{g - i - 2, 1},
        \]
        where
        $
            \beta :
                N_{1, 1}^{\oplus 2}
                \to
                N_{1, 1}^{\oplus 2}
        $
        is the unique (by \cref{prop:n_gamma_1_1}) mapping class
            sending $a_1$ to $a_2$ up to isotopy:
        \begin{figure}
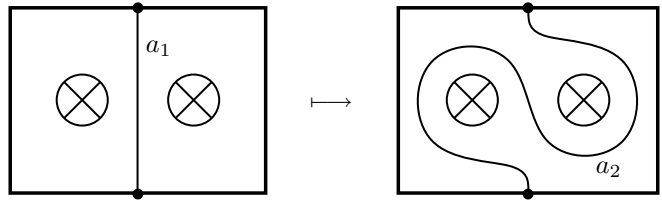

            \incfig{crosscap_transposition}
            \caption{
                $
                    \beta : [a_1] \mapsto [a_2].
                $
            }
            \label{fig:two_one_sided_arcs}
        \end{figure}
\end{definition}

\begin{proposition}\label{prop:homology_n_gamma_2_1}
    \[
        H_*(N\Gamma_{2, 1})
        =
        \begin{cases}
            \Z,
                & \text{if $* = 0$}, \\
            Z_2 b \oplus \Z u,
                & \text{if $* = 1$}, \\
            0,
                & \text{if $* > 1$},
        \end{cases}
    \]
    where
        $b$ is a Dehn twist along a nonseparating, two-sided curve and
        $u$ is the crosscap transposition.
\end{proposition}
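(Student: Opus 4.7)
The plan is to identify $N\Gamma_{2, 1}$ with a group whose homology is classical, and then check that the two named geometric elements $b$ and $u$ map to a standard set of generators. Inspecting the homology groups on the right hand side, one recognises them as the homology of the Klein bottle, so my guess is that $N\Gamma_{2, 1}$ is isomorphic to the fundamental group
\[
    \pi_1(K)
    =
    \langle b, u \mid u b u^{-1} = b^{-1} \rangle,
\]
with $b$ corresponding to the Dehn twist along a nonseparating two-sided curve and $u$ corresponding to the crosscap transposition.

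The relation $u b u^{-1} = b^{-1}$ is a geometric statement, and I would verify it directly. A nonseparating two-sided simple closed curve $c$ on $N_{2, 1}$ can be taken to run once around each of the two crosscaps; the crosscap transposition $u$ fixes a representative of $c$ setwise but reverses the orientation of its tubular neighborhood, so $u T_c u^{-1} = T_c^{-1}$ at the level of mapping classes. That $N\Gamma_{2, 1}$ has no further relations would be imported from the literature: such a presentation of $N\Gamma_{2, 1}$ (with a two-sided Dehn twist and a crosscap slide as generators) appears in the work of Stukow already referenced in \cref{sec:calculation} for higher-genus calculations; alternatively, one can derive it from a Birman-style sequence relating $N\Gamma_{2, 1}$ to the mapping class group of the closed Klein bottle $N\Gamma_{2, 0}$, which is elementary.

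Once the isomorphism $N\Gamma_{2, 1} \cong \pi_1(K)$ is in place, the homology computation is immediate: the Klein bottle is a closed aspherical $2$-manifold, so its classifying space model gives
\[
    H_d(\pi_1(K))
    \cong
    H_d(K)
    =
    \begin{cases}
        \Z, & d = 0, \\
        \Z \oplus \Z/2, & d = 1, \\
        0, & d \geq 2,
    \end{cases}
\]
where the $\Z$-summand of $H_1$ is generated by $u$ and the $\Z/2$-summand by $b$ (the relation $u b u^{-1} = b^{-1}$ forces $2b = 0$ in the abelianization, and no relation is imposed on $u$). This matches the claimed answer.

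The main obstacle is establishing the presentation of $N\Gamma_{2, 1}$ with $b$ and $u$ as generators and $u b u^{-1} = b^{-1}$ as the only relation. Finding generators is not hard (by the classification of surfaces, or by Lickorish-type arguments adapted to the nonorientable setting), but verifying that there are no further relations requires either citing existing work on low-genus nonorientable mapping class groups or carrying out an independent computation via, for instance, the action of $N\Gamma_{2, 1}$ on a suitable arc complex on $N_{2, 1}$ and applying a Bass--Serre-type argument. Given the paper's stated strategy of reviewing known low-dimensional calculations in \cref{sec:calculation}, I expect the proof to cite a reference for the presentation and then just read off the homology as above.
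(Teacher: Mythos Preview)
Your proposal is correct and matches the paper's proof essentially line for line: the paper cites Stukow for the presentation $N\Gamma_{2,1} = \langle b, u \mid bub = u\rangle$ (equivalent to your $ubu^{-1} = b^{-1}$), reads off $H_0$ and $H_1$ from the abelianization, and then observes that the Klein bottle is an aspherical surface with $\pi_1(K) \cong N\Gamma_{2,1}$ to conclude vanishing in higher degrees.
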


\begin{proof}
    Stukow \cite[A.2]{Stukow06} showed that
        \[
            N\Gamma_{2, 1} = \<b, y \mid byb = y\> = \<b, u \mid bub = u\>,
        \]
        where $y \defeq bu$ is the \emph{crosscap slide}.
    The statement is now immediate for $* = 0, 1$.
    For $* > 1$,
        observe that the Klein bottle $K = N_{2, 0}$
            is aspherical and
            has
            $
                \pi_1(K) \cong N\Gamma_{2, 1}
            $
        hence is a $K(N\Gamma_{2, 1}, 1)$.
    However, $H_*(K) = 0$ for $* > 1$.
\end{proof}

\begin{proposition}\label{prop:homology_n_gamma_g_1}
    Let $a$ denote a Dehn twist along a nonseparating, two-sided curve
        with nonorientable complement,
    let $b$ denote a Dehn twist along a nonseparating, two-sided curve
        with orientable complement, and
    let $u$ denote a crosscap transposition.
    Then
    \begin{enumerate}
        \item
        $H_1(N\Gamma_{3, 1}) = Z_2 a \oplus Z_2 u$,
        \item
        $H_1(N\Gamma_{4, 1}) = Z_2 a \oplus Z_2 b \oplus Z_2 u$,
        \item
        $H_1(N\Gamma_{g, 1}) = Z_2 a \oplus Z_2 u$
            for $g \in \{5, 6\}$, and
        \item
        $H_1(N\Gamma_{g, 1}) = Z_2 u$
            for $g \geq 7$.
    \end{enumerate}
\end{proposition}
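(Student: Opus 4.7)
The plan is to compute $H_1(N\Gamma_{g,1}) = N\Gamma_{g,1}^{\mathrm{ab}}$ by abelianizing finite presentations of $N\Gamma_{g,1}$ available in the literature and matching the resulting generators with the classes $a$, $b$, $u$. Since the subsection is explicitly framed as a review of known unstable calculations, the role of the proof is primarily bookkeeping plus citation of Stukow's and Paris--Szepietowski's results.

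First, I would verify that $a$, $b$, $u$ are well-defined classes in $H_1$. By the change-of-coordinates principle on nonorientable surfaces, any two nonseparating two-sided simple closed curves of $N_{g,1}$ whose complements have the same topological type (orientable versus nonorientable) are related by a boundary-fixing homeomorphism; hence the corresponding Dehn twists are conjugate in $N\Gamma_{g,1}$ and represent a single class in $H_1$. An Euler-characteristic computation (cutting a two-sided nonseparating curve preserves $\chi$ and adds two boundary components) shows that a curve with orientable complement exists in $N_{g,1}$ exactly when $g \geq 4$, which accounts for the absence of $b$ when $g = 3$. The same conjugacy argument handles $u$. Each of $a$, $b$, $u$ is automatically 2-torsion in $H_1$: the standard identity $T_c = \rho T_c^{-1} \rho^{-1}$ with $\rho$ an appropriate orientation-reversing homeomorphism gives $2[a] = 2[b] = 0$, and a crosscap transposition squares to a product of commutators.

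Second, I would invoke the finite presentations of $N\Gamma_{g,1}$ due to Stukow (for small $g$) and Paris--Szepietowski (in general). The generators are all Dehn twists of $a$- or $b$-type plus a single crosscap transposition (or slide) representing $u$, so abelianization reduces each relation to a linear equation in the symbols $a$, $b$, $u$ over $\mathbb{Z}/2$. For $g \in \{3, 4\}$ there are few enough generators and relations that the abelianization can be done by hand. For $g \geq 5$, a chain relation expresses a $b$-twist as a product of commutators involving $a$-twists, killing $[b]$ in $H_1$; for $g \geq 7$, a longer chain relation, requiring at least seven crosscaps so as to fit the requisite $a$-curves disjointly, similarly kills $[a]$. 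The stable $g \geq 7$ output $H_1 = \mathbb{Z}/2 \cdot u$ then matches the stable homology computations cited at the beginning of the subsection.

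The main obstacle is the combinatorial matching between the specific generators used in the ambient presentation and the intrinsically defined classes $a$, $b$, $u$, together with the identification of precisely which chain or lantern relation is responsible for each genus threshold ($g = 5$ and $g = 7$). I would extract the explicit relations from Stukow's and Szepietowski's work rather than rederive them; once the relations are in hand, abelianization is mechanical.
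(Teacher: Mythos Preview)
Your approach---abelianize the Paris--Szepietowski presentation and match the generators with $a$, $b$, $u$---is exactly the paper's approach; the only addition in the paper is an appeal to Korkmaz's computation of $H_1(\Gamma_{g,1})$ and $H_1(\Gamma_{g,2})$, needed because the Paris--Szepietowski presentation is built over the orientable subgroup and so abelianizing it requires the abelianization of that subgroup as input.

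One correction: your a~priori argument that $2[b]=0$ via $T_c = \rho T_c^{-1}\rho^{-1}$ fails for $b$-type curves. When the complement of $c$ is orientable there is no boundary-fixing homeomorphism of $N_{g,1}$ reversing $c$; indeed the paper remarks just after the proposition that the conjugacy class of $b$ is only determined up to inverse for this reason. The relation $2[b]=0$ therefore has to come out of the abelianized presentation rather than from a conjugacy trick. This does not affect your overall strategy, but you should drop the torsion claim for $b$ from the preamble and let the calculation produce it.
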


\begin{proof}
    This is a matter of abelianizing the presentation
        obtained by Paris and Szepietowski \cite[\thm{3.5}]{PS15}
        combined with the $H_1$ calculations of $\Gamma_{g, 1}$ and $\Gamma_{g, 2}$
            in \cite{Korkmaz02}.
    We omit these straight-forward calculations.
\end{proof}

\begin{remark}
    The mapping classes $a, u$ are uniquely determined up to conjugation
        hence determine unique classes in $H_1$,
        whereas the conjugacy class of $b$ is only determined up to inverse,
            since its complement is orientable
            and so one cannot construct a diffeomorphism reversing the curve.
    However, $b$ is nonetheless uniquely determined in $H_1$,
        as $2b = 0$ in $H_1$ per the statements,
        so the ambiguity up to a sign disappears.
\end{remark}

\begin{remark}\label{rem:factor_stab_map}
    The torus hole stabilization map
        \[
            H_1(N\Gamma_{2, 1}) \to H_1(N\Gamma_{4, 1})
        \]
        takes $b$ to $b$,
        whereas the composite of crosscap stabilization maps
        \[
            H_1(N\Gamma_{2, 1}) \to H_1(N\Gamma_{3, 1}) \to H_1(N\Gamma_{4, 1})
        \]
        takes $b$ to $a$.
    This shows that the torus hole stabilization map
        does not always factor as two crosscap stabilization maps.
\end{remark}

\subsection{Failure of stability on $H_2$}
We make the following observation
    for the purpose of compiling the tables in \cref{sec:introduction}.

\begin{proposition}
    None of
        \begin{enumerate}
            \item
            $H_2(N\Gamma_{3, 1}, N\Gamma_{2, 1})$,
            \item
            $H_2(N\Gamma_{5, 1}, N\Gamma_{4, 1})$,
            \item
            $H_2(N\Gamma_{7, 1}, N\Gamma_{6, 1})$,
            \item
            $H_2(N\Gamma_{4, 1}, N\Gamma_{2, 1})$,
            \item
            $H_2(N\Gamma_{6, 1}, N\Gamma_{4, 1})$,
            \item
            $H_2(N\Gamma_{7, 1}, N\Gamma_{5, 1})$, or
            \item
            $H_2(N\Gamma_{8, 1}, N\Gamma_{6, 1})$
        \end{enumerate}
        vanish.
\end{proposition}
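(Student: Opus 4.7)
The plan is uniform across the seven cases: for each pair $(g', g)$ in the list, I will use the long exact sequence of the pair
\[
    H_2(N\Gamma_{g, 1}) \to H_2(N\Gamma_{g', 1}) \to H_2(N\Gamma_{g', 1}, N\Gamma_{g, 1}) \to H_1(N\Gamma_{g, 1}) \tox{\phi} H_1(N\Gamma_{g', 1})
\]
and show that the stabilization-induced map $\phi$ has nontrivial kernel. Exactness then forces $H_2(N\Gamma_{g', 1}, N\Gamma_{g, 1})$ to surject onto $\ker \phi$, hence to be nonzero.

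All the needed input is already on the table: the presentations of $H_1(N\Gamma_{g, 1})$ for small $g$ from \cref{prop:homology_n_gamma_2_1,prop:homology_n_gamma_g_1}, together with the behavior of the generators under the two stabilization maps. The crosscap stabilization sends $u \mapsto u$ and $a \mapsto a$; a Dehn twist $b$ around a two-sided nonseparating curve with orientable complement maps to a Dehn twist whose complement now contains the new crosscap, hence is of type $a$, so $b \mapsto a$. The torus hole stabilization sends $u \mapsto u$, $a \mapsto a$, and $b \mapsto b$ (cf.~\cref{rem:factor_stab_map}). Finally, if the target $H_1$ does not contain a summand for the receiving generator (e.g., no $a$ appears in $H_1(N\Gamma_{g, 1})$ for $g \geq 7$, and no $b$ appears for $g \geq 5$), the generator must map to $0$.

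With this dictionary, each kernel is identified immediately. For (1) and (4) the class $2u$ is nontrivial in $H_1(N\Gamma_{2,1}) = \mathbb{Z}_2 b \oplus \mathbb{Z} u$ but becomes trivial in a target where $u$ has order $2$. For (3), (6), (7) the generator $a$ lies in the kernel since the target $H_1(N\Gamma_{g', 1})$ has no $a$-summand. For (2), the crosscap stabilization $N\Gamma_{4,1} \to N\Gamma_{5,1}$ sends both $a$ and $b$ to $a$, so $a + b$ lies in the kernel. For (5), the torus hole stabilization $N\Gamma_{4,1} \to N\Gamma_{6,1}$ keeps $b$ as a Dehn twist with orientable complement, but $H_1(N\Gamma_{6,1})$ has no $b$-summand, so $b$ maps to $0$.

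The only genuinely non-routine step is verifying the tracking of the generators through the stabilization maps, and the main subtlety is the asymmetry that under crosscap stabilization a two-sided curve with orientable complement ceases to have orientable complement, whereas torus hole stabilization preserves orientability of complements. Once this is articulated the rest is bookkeeping from \cref{prop:homology_n_gamma_2_1,prop:homology_n_gamma_g_1} and the cases are dispatched one by one.
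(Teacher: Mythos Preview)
Your approach is exactly the paper's: use the long exact sequence of the pair and the $H_1$ computations from \cref{prop:homology_n_gamma_2_1,prop:homology_n_gamma_g_1} to see that the stabilization map on $H_1$ has nontrivial kernel in each case.

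One small gap: the principle ``if the target $H_1$ does not contain a summand for the receiving generator, the generator must map to $0$'' is not justified by the cited propositions. For instance, in case~(5) you assert that $b \mapsto 0$ in $H_1(N\Gamma_{6,1}) = Z_2 a \oplus Z_2 u$, but the propositions only list a generating set; they do not tell you whether the class of a Dehn twist with orientable complement in $N_{6,1}$ equals $0$, $a$, $u$, or $a+u$. The same issue arises in cases~(3), (6), (7) with $a$ in $H_1(N\Gamma_{g,1})$ for $g \geq 7$.

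The fix is easier than the tracking you attempt: in cases~(2), (3), (5), (6), (7) the source $H_1$ is an $\F_2$-vector space of strictly larger dimension than the target, so $\phi$ cannot be injective regardless of where the generators go. In cases~(1) and~(4) the source contains the infinite cyclic summand $\Z u$ while the target is finite, so again $\phi$ has nontrivial kernel (your $2u$ argument is fine here). With this correction the proof goes through and matches the paper's.
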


\begin{proof}
    This is immediate from
        the associated long exact sequences and
        the results of \cref{subsec:homology_calculation}.
\end{proof}


\section{A presentation for the $E_2$-algebra $R$}
\label{sec:secondary_stability}

In this section, we prove \cref{thm:secondary_stability}
    using a strategy similar to the one used
    to prove the generic stability theorem \cite[\thm{18.1}]{GKRW19Ek}.

As we are interested in the homology of $R$,
    we may as well linearize and consider instead
    the $E_2$-algebra
    $
        R_\Z \in \Alg_{E_2}(\sMod_\Z^{\N_0})
    $
    in graded simplicial $\Z$-modules.
We write
    $
        H_{n, d}(R_\Z) \defeq \pi_d(R_\Z(n))
    $
    and similar for other simplicial modules,
    reflecting the Dold--Kan correspondence,
    which then says that
    \[
        H_{n, d}(R_\Z) \cong H_{n, d}(R).
    \]

To simplify the argument,
    we are going to make use of the operations
    \[
        Q^1_\Z :
            H_{n, 0}(A)
            \to
            H_{2n, 1}(A)
    \]
    for
    $
        A \in \Alg_{E_2}(\sMod_\Z).
    $
These were described in \cite[3.1]{GKRW19MCG}
    and have two important properties,
    \begin{enumerate}[(1)]
        \item\label{item:integral_refinement:1}
        $Q^1_\Z(x) \otimes \F_2 = Q^1_{\F_2}(x \otimes \F_2)$,
            where
            $
                Q^1_{\F_2} \defeq \xi :
                    H_{n, 0}(A \otimes \F_2)
                    \to
                    H_{2n, 1}(A \otimes \F_2)
            $
            is the Dyer--Lashof top operation, and
        \item\label{item:integral_refinement:2}
        $
            2 Q^1_\Z(x) = -[x, x].
        $
    \end{enumerate}
Their purpose is to unify arguments that would otherwise need to be repeated
    first over $\F_2$ and then over $\F_\p$ for $\p$ an odd prime.

Recall the homology classes $d, b, u$ from \cref{sec:calculation}.
Let
    $
        \sigma \in H_{1, 0}(R_\Z)
    $
    denote the generator corresponding to the path-component for $N_{1, 1}$, and
let
    $
        \tau \in H_{2, 0}(R_\Z)
    $
    denote the generator corresponding to the path-component for $S_{1, 1}$.
Let $\RO \subseteq R$ be the sub-$E_2$-algebra on the orientable surfaces, and
let
    $
        d \in H_{2, 1}((\RO)_\Z) \cong H_1(\Gamma_{1, 1})
    $
    denote a class represented by a Dehn twist along a nonseparating curve.
(There are two choices for $d$,
    one for each orientation.)
Since Dehn twists along nonseparating curves generate the mapping class group $\Gamma_{g, 1}$
    (see \cite[\thm{5.4}]{FM11}),
    $d$ generates $H_{2, 1}((\RO)_\Z)$ and
    $\tau \cdot d$ generates $H_{4, 1}((\RO)_\Z)$.
Pick $n \in \Z$
    such that
    \[
        n \cdot \tau \cdot d
        =
        Q^1_\Z(\tau)
        \in H_{4, 1}((\RO)_\Z) \subseteq H_{4, 1}(R_\Z).
    \]
Fix representive maps
    $
        \tilde d, \tilde b, \tilde u
    $
    from spheres to $R_\Z$
    representing $d, b, u$ respectively.
Also, let $\tilde\sigma, \tilde\tau$ be the maps
    classifying the models $N_{1, 1}, S_{1, 1} \in \MCG$
        that we fixed in \cref{subsec:secondary_stabilization_map}.
Using these,
    define a map of nonunital $E_2$-algebras
    \[
        A'
        \defeq
        E_2(
            S_\Z^{1, 0} \sigma
            \oplus S_\Z^{2, 0} \tau
            \oplus S_\Z^{2, 1} d
            \oplus S_\Z^{2, 1} b
            \oplus S_\Z^{2, 1} u
        )
        \to
        R_\Z.
    \]
Fix a map
    $
        \tilde Q_\Z^1(\tau) :
            S^{4, 1}
            \to
            A'
    $
    representing
    $
        Q_\Z^1(\tau) \in H_{4, 1}(A')
    $
    and let also
    $
        \tilde Q_\Z^1(\tau) :
            S^{4, 1}
            \to
            A'
            \to
            R_\Z
    $
    denote the composite
    representing
    $
        Q_\Z^1(\tau) \in H_{4, 1}(R).
    $
By choosing nullhomotopies of
    $\tilde\sigma \tilde\tau - \tilde\sigma^3$ and
    $\tilde Q_\Z^1(\tau) - n \cdot \tilde\tau \cdot \tilde d$
    witnessing their triviality in $H_{*, *}(R_\Z)$,
    we pick an extension
    \begin{align*}
        A
        \defeq&\,
        E_2(
            S_\Z^{1, 0} \sigma
            \oplus S_\Z^{2, 0} \tau
            \oplus S_\Z^{2, 1} d
            \oplus S_\Z^{2, 1} b
            \oplus S_\Z^{2, 1} u
        )
        \\
        &
        \cup^{E_2}_{\sigma \tau - \sigma^3}
            D_\Z^{3, 1} \lambda
        \cup^{E_2}_{\tilde Q_\Z^1(\tau) - n \cdot \tau \cdot d}
            D_\Z^{4, 2} \rho
        \tox{f}
        R_\Z
    \end{align*}
    of $A' \to R_\Z$.
This is the presentation of $R_\Z$
    we shall use to prove \cref{thm:secondary_stability}.
Here, for a ring $B$,
    $S_B^{n, p} \in \sMod_B^{\N_0}$ is defined by
    \[
        S_B^{n, p}(g)
        \defeq
        \begin{cases}
            B[\Delta^p] / B[\del\Delta^p],
                & \text{if $g = n$},
            \\
            0,
                & \text{else},
        \end{cases}
    \]
    (as opposed to just linearizing the sphere in $\SSet$!)
    and $D_B^{n, p} \in \sMod_B^{\N_0}$ is defined by
    \[
        D_B^{n, p}(g)
        \defeq
        \begin{cases}
            B[\Delta^p],
                & \text{if $g = n$},
            \\
            0,
                & \text{else}.
        \end{cases}
    \]
$\blank \cup^{E_2}_x D_B^{n, p}$ denotes $E_2$-cell attachment;
    see \cite[6.1.1]{GKRW19Ek}.

\begin{figure}[h]
    \bgroup
    \def\arraystretch{2}
    \setlength{\tabcolsep}{1.8em}
    \begin{tabular}{c | c c c c c c c c c}
        $2$
        &
        &
        &
        & $\rho$
        \\
        $1$
        &
        & $d, b, u$
        & $\lambda$
        &
        \\
        $0$
        & $\sigma$
        & $\tau$
        &
        &
        \\
        \hline
        $\rfrac{d}{n}$ &
        $1$ &
        $2$ &
        $3$ &
        $4$
    \end{tabular}
    \egroup
    \caption{
        Bidegrees $(n, d)$ of generators and relators in $A$.
    }
\end{figure}

\begin{lemma}\label{lem:h0_a}
    The map
        $
            H_{*, 0}(A) \to H_{*, 0}(R_\Z)
        $
        is an isomorphism of $\N_0$-graded nonunital rings.
\end{lemma}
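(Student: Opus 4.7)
The plan is to compute $H_{*,0}(A)$ directly from the cell structure and identify the map with the presentation of $H_{*,0}(R_\Z)$ coming from \cref{prop:pi_0_r}.

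First I would compute $H_{*,0}(A') = H_{*,0}\bigl(E_2(S_\Z^{1,0}\sigma \oplus S_\Z^{2,0}\tau \oplus S_\Z^{2,1}d \oplus S_\Z^{2,1}b \oplus S_\Z^{2,1}u)\bigr)$. Since $E_2$ is connected, for any $E_2$-algebra $B$ the group $\pi_{*,0}(B) = H_{*,0}(B)$ is a nonunital commutative $\N_0$-graded ring. Freeness of $A'$ then gives that $H_{*,0}(A')$ is the free commutative nonunital $\N_0$-graded ring on the generators sitting in simplicial degree $0$, namely $\sigma$ in bidegree $(1,0)$ and $\tau$ in bidegree $(2,0)$; the generators $d,b,u$ have bidegree $(\cdot, 1)$ so they and all their $E_2$-products contribute only in positive simplicial degree. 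Thus
\[
    H_{*,0}(A') \;\cong\; F\{\sigma, \tau\}
\]
as nonunital commutative semigroup rings (even just as commutative semigroups with $\N_0$-grading).

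Next I would analyze the effect on $H_{*,0}$ of attaching the two cells $\lambda$ and $\rho$. By the standard cofiber-sequence identification of cell attachment (see \cite[6.1.1]{GKRW19Ek}), attaching $D^{n,p}_\Z$ along a map $S^{n, p-1}_\Z \to A'$ changes homology only in simplicial degrees $\geq p-1$. Therefore the cell $\rho$, attached along a class in $H_{4,1}$, does not affect $H_{*,0}$, while $\lambda$, attached along $\sigma\tau - \sigma^3 \in H_{3,0}(A')$, precisely imposes the relation $\sigma\tau = \sigma^3$. Combining,
\[
    H_{*,0}(A) \;\cong\; F\{\sigma, \tau\} / (\sigma\tau = \sigma^3).
\]

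Finally, the map $f : A \to R_\Z$ sends $\sigma$ and $\tau$ to the classes of $N_{1,1}$ and $S_{1,1}$, respectively, by construction of $\tilde\sigma$ and $\tilde\tau$. Under the isomorphism of \cref{prop:pi_0_r}, this is exactly the isomorphism of nonunital commutative graded rings
\[
    F\{\sigma, \tau\} / (\sigma\tau = \sigma^3)
    \;\tox{\cong}\;
    F\{S_{1,1}, N_{1,1}\} / (S_{1,1} \oplus N_{1,1} = N_{1,1}^{\oplus 3}).
\]
The only real subtlety is keeping track of the bookkeeping between $E_2$-cell attachment and the corresponding effect on $\pi_0$; once that is handled the identification is immediate. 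There is no serious obstacle in this lemma—it is a direct cell-counting argument whose role is to set up the low-degree part of the presentation that will be used in the rest of the section.
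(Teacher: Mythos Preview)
Your proposal is correct and follows essentially the same approach as the paper. The paper phrases the computation more categorically---observing that $H_{*,0}$ is a left adjoint from $E_2$-algebras to graded nonunital commutative rings, hence preserves colimits and takes the free $E_2$-algebra to the free ring---and then reads off $H_{*,0}(A) = F^{\mathrm{Rng}}(1_*\Z\sigma \oplus 2_*\Z\tau)/(\sigma\tau - \sigma^3)$ before invoking \cref{prop:pi_0_r}; your cell-by-cell analysis is the concrete unwinding of exactly this colimit-preservation statement. One small caution: your sentence ``attaching $D^{n,p}_\Z$ \ldots changes homology only in simplicial degrees $\geq p-1$'' is literally true for an ordinary pushout but not for an $E_2$-cell attachment in general; the reason it is valid for $H_{*,0}$ is precisely the left-adjoint property the paper isolates, so you may want to phrase the justification that way.
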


\begin{proof}
    Let
        $
            \f{Rng} : \Ab^{\N_0} \to \Ab^{\N_0}
        $
        be the graded, commutative, nonunital ring monad.
    $
        H_{*, 0} :
            \Alg_{E_2}(\sMod_\Z^{\N_0})
            \to
            \Alg_\f{Rng}(\Ab^{\N_0})
    $
    preserves colimits.
    Since
        $
            H_{*, 0} F^\f{Rng} \cong F^{E_2} H_{*, 0}
        $,
        we get
        \[
            H_{*, 0}(A)
            =
            F^\f{Rng} (1_* \Z \sigma \oplus 2_* \Z \tau)
                / (\sigma \tau - \sigma^3).
        \]
    The statement now follows from \cref{prop:pi_0_r}.
\end{proof}

\begin{lemma}\label{lem:presentation_e2_conn}
    $H_{n, d}^{E_2}(R_\Z, A) = 0$ for
        $\frac{d}{n} < \frac13$.
\end{lemma}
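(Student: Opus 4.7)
The plan is to use the long exact sequence in $E_2$-homology associated to the cofiber $R_\Z / A$:
\begin{equation*}
    \cdots \to H^{E_2}_{n, d}(A) \xrightarrow{f_*} H^{E_2}_{n, d}(R_\Z) \to H^{E_2}_{n, d}(R_\Z, A) \to H^{E_2}_{n, d - 1}(A) \to \cdots.
\end{equation*}
First I read off $H^{E_2}_{*, *}(A)$ from the cell structure: the free $E_2$-algebra $A'$ on $\sigma, \tau, d, b, u$ has $H^{E_2}(A')$ free on those cells, and since the attaching classes $\sigma\tau - \sigma^3$ and $\tilde Q^1_\Z(\tau) - n \tau d$ of the cells $\lambda, \rho$ are sums of products lying in $H^{E_2}_{3, 0}(A') = 0$ and $H^{E_2}_{4, 1}(A') = 0$, each relator cell contributes a free summand. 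Thus $H^{E_2}_{n, d}(A)$ is free on the cells of $A$ at bidegree $(n, d)$, concentrated on the set $\{(1, 0), (2, 0), (2, 1), (3, 1), (4, 2)\}$. For $(n, d)$ with $d/n < 1/3$, the shifted bidegree $(n, d-1)$ corresponding to any cell would force $(n, d) \in \{(1, 1), (2, 1), (2, 2), (3, 2), (4, 3)\}$, all of which violate the range, so $H^{E_2}_{n, d-1}(A) = 0$ and the long exact sequence collapses to
\begin{equation*}
    H^{E_2}_{n, d}(A) \xrightarrow{f_*} H^{E_2}_{n, d}(R_\Z) \to H^{E_2}_{n, d}(R_\Z, A) \to 0.
\end{equation*}

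The lemma thus reduces to showing that $f_*$ is surjective in the range $d/n < 1/3$. At the cell bidegrees $(1, 0)$ and $(2, 0)$, surjectivity onto $H^{E_2}_{n, 0}(R_\Z) = \pi_{n, 0}(R)/(\text{decomposables})$ is immediate from \cref{prop:pi_0_r}, as $\sigma$ and $\tau$ generate $\pi_{*, 0}(R)$ modulo decomposables. For all other $(n, d)$ in the range, $H^{E_2}_{n, d}(A) = 0$ and one must check $H^{E_2}_{n, d}(R_\Z) = 0$; \cref{cor:e2_vanishing_r} delivers this whenever $d \leq \lfloor (n-1)/2 \rfloor - 1$. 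A direct enumeration shows that the only bidegree satisfying $d/n < 1/3$ but lying outside this range is $(4, 1)$.

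The principal obstacle is therefore the exceptional bidegree $(4, 1)$, where I must establish $H^{E_2}_{4, 1}(R_\Z) = 0$ by hand. Using \cref{sec:calculation} to identify $H_{4, 1}(R_\Z) = H_1(N\Gamma_{4, 1}) \oplus H_1(\Gamma_{2, 1})$ and reading off generators from \cref{prop:homology_n_gamma_g_1}, each generator is exhibited as a decomposable product under the $\oplus$-structure of $R_\Z$: in terms of the chosen cell classes $b, u \in H_{2, 1}(R_\Z)$ and $d \in H_{2, 1}(R_\Z)$, the Dehn twists $a, b \in H_1(N\Gamma_{4, 1})$ (using the identifications $N_{1, 1}^{\oplus 2} \oplus N_{2, 1} \cong N_{4, 1}$ and $S_{1, 1} \oplus N_{2, 1} \cong N_{4, 1}$) equal $\sigma^2 \cdot b$ (nonorientable complement) and $\tau \cdot b$ (orientable complement $S_{1, 3}$), the crosscap transposition $u \in H_1(N\Gamma_{4, 1})$ equals $\sigma^2 \cdot u$, and the Dehn twist generator of $H_1(\Gamma_{2, 1})$ equals $\tau \cdot d$. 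The only remaining candidate for a non-decomposable class in $H^{E_2}_{4, 1}(R_\Z)$ is the bracket $[\tau, \tau]$, which by the integral refinement identity $2 Q^1_\Z(\tau) = -[\tau, \tau]$ combined with the relator cell $\rho$ (which enforces $Q^1_\Z(\tau) = n \tau d$ in $R_\Z$) becomes decomposable as well. Hence $H^{E_2}_{4, 1}(R_\Z) = 0$ and the lemma follows.
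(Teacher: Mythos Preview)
Your argument has a genuine gap at bidegree $(4,1)$. You show that every generator of $H_{4,1}(R_\Z)$ is a product of lower-degree classes and conclude $H^{E_2}_{4,1}(R_\Z) = 0$. This inference is not valid: $E_2$-homology is the homology of the \emph{derived} indecomposables $Q^{E_2}_\L(R_\Z)$, not ordinary homology modulo decomposables. What you have shown is only that the Hurewicz map $H_{4,1}(R_\Z) \to H^{E_2}_{4,1}(R_\Z)$ has trivial image; you have not shown the target vanishes, and no absolute Hurewicz theorem applies here since $R_\Z$ is not connected in the relevant sense. (Your aside about $[\tau,\tau]$ is also misplaced: the Browder bracket is itself an $E_2$-operation and hence automatically vanishes in $E_2$-indecomposables; there was never any need to express it as a product, and the relator $\rho$ plays no role in relations holding in $R_\Z$---the identity $Q^1_\Z(\tau) = n\,\tau d$ holds there by the choice of $n$.)

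The paper closes this gap by passing to the pair $(R_\Z, A)$ in \emph{ordinary} homology first. Your generator computation is precisely the statement that $H_{4,1}(A) \to H_{4,1}(R_\Z)$ is surjective; combined with \cref{lem:h0_a} and the long exact sequence of the pair, this yields $H_{4,1}(R_\Z, A) = 0$. Then the \emph{relative} Hurewicz theorem \cite[Proposition~11.9]{GKRW19Ek}, applied with connectivity functions $c \equiv 0$ and $c_f \equiv 1$ (the latter available exactly because $H_{*,0}(A) \to H_{*,0}(R_\Z)$ is an isomorphism), gives that $H_{4,1}(R_\Z, A) \to H^{E_2}_{4,1}(R_\Z, A)$ is surjective, whence the target is zero. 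The extra connectivity of the pair---unavailable for $R_\Z$ on its own---is what makes the Hurewicz step work.
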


\begin{proof}
    It is known that
        \[
            H_{n, d}^{E_2}(A)
            =
            0
            =
            H_{n, d}^{E_2}(R_\Z)
        \]
        for $d \leq \floor{\frac{n - 1}{2}} - 1$.
    The first equality is clear from the bidegrees $(n, d)$
        of the generators and relators in the presentation.
    The second equality is \cref{cor:e2_vanishing_r}.
    From the long exact sequence for $(R_\Z, A)$,
        it follows that
        \[
            H_{n, d}^{E_2}(R_\Z, A) = 0
            \text{ when $d \leq \floor{\frac{n - 1}{2}} - 1$.}
        \]
    Since the only $(n, d)$ satisfying
        $d > \floor{\frac{n - 1}{2}} - 1$ and
        $\frac{d}{n} < \frac13$
        are $(n, d) = (1, 0), (2, 0), (4, 1)$,
        it remains to show vanishing of $H_{n, d}^{E_2}(R_\Z, A)$ in these cases.

    For any $n \geq 0$,
        the long exact sequence also gives the exact sequence
        \[
            H_{n, 0}^{E_2}(A)
            \tox{\cong}
            H_{n, 0}^{E_2}(R_\Z)
            \to
            H_{n, 0}^{E_2}(R_\Z, A)
            \to
            0,
        \]
        where the first map is an isomorphism by
            \cref{lem:h0_a} and
            the fact that
                $
                    H_{*, 0} \circ Q^{E_2}
                    =
                    Q^{\f{Rng}} \circ H_{*, 0}.
                $
    Thus,
        $
            H_{n, 0}^{E_2}(R_\Z, A) = 0,
        $
        resolving the first two cases.

    The case $(n, d) = (4, 1)$ remains.
    As was noted above,
        $\tau \cdot d$ generates $H_1(\Gamma_{2, 1})$.
    By \cref{prop:homology_n_gamma_g_1},
        $\sigma^2 \cdot u$,
        $\tau \cdot b$, and
        $a = \sigma^2 \cdot d$
        generate $H_1(N\Gamma_{4, 1})$.
    Therefore, $H_{4, 1}(A) \to H_{4, 1}(R_\Z)$ is surjective.
    We have an exact sequence
        \[
            \begin{tikzcd}
                H_{4, 1}(A)
                \ar[r, two heads]
                    & H_{4, 1}(R_\Z)
                    \ardelcenter
                    \ar[r]
                        & H_{4, 1}(R_\Z, A)
                        \ardel[lld]
                \\
                H_{4, 0}(A)
                \ar[r, "\cong"]
                    & H_{4, 0}(R_\Z),
            \end{tikzcd}
        \]
        where the last map is an isomorphism by \cref{lem:h0_a}.
    Thus,
        $
            H_{4, 1}(R_\Z, A) = 0.
        $
    Using \cite[\prop{11.9}]{GKRW19Ek}
        with
            $c = (0, 0, 0, \ldots)$,
            $c_f = (1, 1, 1, \ldots)$,
        the Hurewicz map
        \[
            H_{4, 1}(R_\Z, A)
            \to
            H_{4, 1}^{E_2}(R_\Z, A)
        \]
        is surjective.
    In particular,
        $
            H_{4, 1}^{E_2}(R_\Z, A) = 0
        $
        as desired.
\end{proof}

\subsection{Proof of \cref{thm:secondary_stability} and \cref{thm:secondary_stability_2}}
\label{subsec:secondary_stability_proof}

The construction in \cref{sec:map} gives us an object
    $
        R_\Z \sslash (\tilde\sigma, \tilde\tau)
        \in \sMod_\Z^{\N_0},
    $
    which is weakly equivalent to the linearization of the object
    $
        R \sslash (\tilde\sigma, \tilde\tau)
        \in \SSet_*^{\N_0}
    $
    studied in \cref{sec:map}.

\begin{theorem}\label{thm:quotient_vanishing}
    $
        \tilde H_{n, d}(R_\Z \sslash (\tilde\sigma, \tilde\tau)) = 0
    $
    for $\frac{d}{n} < \frac13$.
\end{theorem}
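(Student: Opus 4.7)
The plan is to leverage the $E_2$-cellular presentation $f : A \to R_\Z$ together with the vanishing estimate of \cref{lem:presentation_e2_conn}, in the spirit of the proofs of the generic stability theorem \cite[\thm{18.1}]{GKRW19Ek} and its relative analogue \cite[\thm{B}]{GKRW19MCG}.

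First I would reduce from $R_\Z$ to $A$. By \cref{lem:presentation_e2_conn}, one can complete $A$ to a CW-$E_2$-algebra weakly equivalent to $R_\Z$ by attaching further $E_2$-cells only in bidegrees $(n, d)$ with $\tfrac{d}{n} \geq \tfrac13$. Since $\blank \sslash (\tilde\sigma, \tilde\tau)$ is constructed as an iterated homotopy cofiber and commutes with $E_2$-cell attachment, each such additional cell contributes to the homology of the quotient only through iterated $E_2$-operations applied to itself and to the surviving generators $d, b, u$. A short slope computation---products preserve $\tfrac{d}{n}$ as a weighted average, while Browder brackets and $Q_\Z^1$ strictly raise $\tfrac{d}{n}$---shows that these contributions lie in bidegrees with $\tfrac{d}{n} \geq \tfrac13$. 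This reduces the theorem to proving $\tilde H_{n, d}(A \sslash (\tilde\sigma, \tilde\tau)) = 0$ for $\tfrac{d}{n} < \tfrac13$.

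For the vanishing on $A$, I would filter $A$ by its $E_2$-skeleta; the construction $\blank \sslash (\tilde\sigma, \tilde\tau)$ inherits a corresponding filtration whose associated graded is controlled by iterated $E_2$-operations applied to the generators $\{\sigma, \tau, d, b, u, \lambda, \rho\}$, modulo the image of $\tilde\sigma$ and $\tilde\tau$. Upon taking the quotient, $\sigma$ and $\tau$---the only generators of slope $< \tfrac13$---become null, together with any $E_2$-monomial having them as a multiplicative factor; the remaining generators $d, b, u, \lambda, \rho$ all have slope $\geq \tfrac13$, and so does every iterated $E_2$-operation applied to them by the same slope argument. The $\rho$-relation $Q_\Z^1(\tau) = n \tau d$ is essential here, as it expresses the a priori problematic class $Q_\Z^1(\tau) \in H_{4, 1}$ (slope $1/4 < \tfrac13$), as well as the related $[\tau, \tau] = -2 Q_\Z^1(\tau)$, in the form $n \tau d$, which vanishes in the quotient. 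Hence the associated graded vanishes in bidegrees with $\tfrac{d}{n} < \tfrac13$, forcing the same for $A \sslash (\tilde\sigma, \tilde\tau)$.

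The main obstacle is to execute this skeletal-filtration argument rigorously in the cellular $E_2$-framework of \cite{GKRW19Ek}: one must identify the associated graded precisely and verify that the bespoke construction $\blank \sslash (\tilde\sigma, \tilde\tau)$ of \cref{sec:map} is suitably functorial and compatible with $E_2$-cell filtrations. An additional subtlety is that $A \sslash (\tilde\sigma, \tilde\tau)$ is only a pointed graded simplicial module, not an $E_2$-algebra, so the $E_2$-operations appearing in the associated graded must be interpreted as the images under $f$ of operations in $R_\Z$ rather than as internal operations on the quotient.
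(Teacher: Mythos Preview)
Your strategy is essentially the paper's---extend $A$ to a CW-$E_2$-algebra via \cref{lem:presentation_e2_conn}, filter by skeleta, and run a slope argument on the associated graded---but two genuine gaps remain. First, you omit the reduction to field coefficients. The paper passes to $R_\p = R_\Z \otimes \F_\p$ for each prime $\p$ (and $\Q$) precisely because Cohen's theorem \cite[\thm{16.4}]{GKRW19Ek}, which identifies $H_{*,*}(E_2(X))$ as the free $W_1$-algebra $\Lambda_{\F_\p}(L)$ with $L$ spanned by admissible Dyer--Lashof monomials on basic Lie words, requires a field. Over $\Z$ there is no such clean basis, so your phrase ``controlled by iterated $E_2$-operations'' cannot be made rigorous as stated, and in particular one cannot simply enumerate the classes of slope $< \tfrac13$.

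Second, and more seriously, your claim that ``the associated graded vanishes in bidegrees with $\tfrac{d}{n} < \tfrac13$'' is false. In the associated graded of the cell filtration the attaching maps are forgotten: $\rho$ becomes a free generator and the relation $Q_\Z^1(\tau) = n\,\tau d$ is lost. After killing $\sigma$ and $\tau$, the class $Q_\Z^1(\tau)$ (packaging both $[\tau,\tau]$ and, at $\p = 2$, $Q^1_{\F_2}(\tau)$) still sits in bidegree $(4,1)$, slope $\tfrac14 < \tfrac13$, on the $E^1$-page. The relation manifests instead as a \emph{differential} $d^1(\rho) = Q_\Z^1(\tau) - n\,\tau d$, which in the quotient reads $d^1(\rho) = Q_\Z^1(\tau)$ and kills the offending class only on the $E^2$-page. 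The paper isolates this differential via a further auxiliary filtration assigning $\rho$ and $Q_\Z^1(\tau)$ filtration $0$, so that on the resulting associated graded the tensor factor $(\Lambda_{\F_\p}[Q_\Z^1(\tau),\rho],\delta)$ with $\delta(\rho) = Q_\Z^1(\tau)$ has homology $0$ (odd $\p$) or $\Lambda_{\F_2}[\rho^2]$ ($\p = 2$), while the complementary factor is generated in slope $\geq \tfrac13$. Your explanation conflates the level of $A$, where the relation holds, with the associated graded, where it does not.
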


\begin{proof}
    As a convention,
        $
            \F_0 \defeq \Q.
        $
    For a prime $\p$ or $\p = 0$,
        let
        $
            R_\p
            \defeq
            R_\Z \otimes \F_\p
            \in \Alg_{E_2}(\sMod_{\F_\p}^{\N_0})
        $
        be the change of coefficients to $\F_\p$.
    In turn, it suffices to show
        $
            H_{n, d}(R_\p \sslash (\tilde\sigma, \tilde\tau)) = 0
        $
        for each $\p$ and $\frac{d}{n} < \frac13$.

    Fix $\p$.
    By \cref{lem:presentation_e2_conn} and \cite[\thm{11.21}]{GKRW19Ek},
        there is an $E_2$-CW-approximation
        \[
            A \otimes \F_\p \to Z \tox{\simeq} R_\p,
        \]
        with $E_2$-cells $D_{\F_\p}^{(n_i, d_i)} x_i$ with $\frac{d_i}{n_i} \geq \frac13$
            attached to $A \otimes \F_\p$.
    Let $\f{sk}(Z)$ denote $Z$ filtered with the nonnegative skeletal filtration.
    There is a trigraded spectral sequence
        \begin{equation}\label{eqn:ss_mod_sigma_tau}
            E_{n, p, q}^1
            =
            H_{n, p + q, p}(\L \f{gr}(\f{sk}(Z) \sslash (\sigma, \tau)))
            \implies
            H_{n, p + q}(Z \sslash (\sigma, \tau)),
        \end{equation}
        obtained from the filtered object $\f{sk}(Z) \sslash (\sigma, \tau)$.
    Here,
        \[
            \L \gr :
                \Ho((\sMod_\Z^{\N_0})^{\Z_{\leq}})
                \to
                \Ho((\sMod_\Z^{\N_0})^{\Z_=})
        \]
        is the derived associated graded.
    Since homotopy cofibers commute with homotopy cofibers,
        and since $\f{sk}(Z)$ is filtered by cofibrations,
        we have
        \[
            \L \f{gr}(\f{sk}(Z) \sslash (\sigma, \tau))
            \simeq
            \f{gr}(\f{sk}(Z)) \sslash (\sigma, \tau).
        \]
    By \cite[\lem{12.7(iii)}]{GKRW19Ek} and a variant of \cite[\thm{6.14}]{GKRW19Ek},
        we have
        \begin{equation}\label{eqn:ass_gr}
            \f{gr}(\f{sk}(Z))
            \cong
            0_* (A \otimes \F_\p)
            \oplus E_2(\bigoplus_i (d_i)_* S_{\F_\p}^{n_i, d_i} x_i)
        \end{equation}
        as
        $
            E_2(
                S_{\F_\p}^{1, 0} \sigma
                \oplus S_{\F_\p}^{2, 0} \tau
            )
        $-modules in $\SSet^{\N_0 \times \Z_=}$.
    Filtering
        $
            \f{gr}(\f{sk}(Z))
        $
        by the cell attachment filtration
        (see \cite[(6.5)]{GKRW19Ek}),
        we get an additional spectral sequence
        \begin{equation}\label{eqn:ss_mod_sigma_tau_2}
            \tilde E_{n, p, q}^1
            =
            H_{n, p + q}(\L \f{gr}_\f{cell}(\f{gr}_\f{sk}(Z) \sslash (\sigma, \tau))(p))
            \implies
            H_{n, p + q}(\f{gr}_\f{sk}(Z) \sslash (\sigma, \tau))
        \end{equation}
        computing $E^1_{*, *, *}$,
        where we decorate each $\f{gr}$ with a subscript specifying the filtration.
    As before,
        \begin{align*}
            \L \f{gr}_\f{cell}(\f{gr}_\f{sk}(Z) \sslash (\sigma, \tau))
            &\simeq
            \f{gr}_\f{cell}(\f{gr}_\f{sk}(Z)) \sslash (\sigma, \tau)
            \\
            &\simeq
            E_2(X) \sslash (\sigma, \tau)
        \end{align*}
        as
        $
            E_2(
                S_{\F_\p}^{1, 0} \sigma
                \oplus S_{\F_\p}^{2, 0} \tau
            )
        $-modules,
        where
        \[
            X
            \defeq
            S_{\F_\p}^{1, 0} \sigma
            \oplus S_{\F_\p}^{2, 0} \tau
            \oplus S_{\F_\p}^{2, 1} d
            \oplus S_{\F_\p}^{2, 1} b
            \oplus S_{\F_\p}^{2, 1} u
            \oplus S_{\F_\p}^{3, 1} \lambda
            \oplus S_{\F_\p}^{4, 2} \rho
            \bigoplus_i S_{\F_\p}^{n_i, d_i} x_i,
        \]
        forgetting the internal grading.
    By Cohen's theorem \cite[\thm{16.4}]{GKRW19Ek},
        \begin{equation}\label{eqn:cohen}
            H_{*, *}(E_2(X))
            \cong
            H_{*, *}(E_2^+(X))
            \cong
            W_1(H_{*, *}(X)),
        \end{equation}
        where $W_1(\blank)$ takes free $W_1$-algebra
            (see \cite[16.1]{GKRW19Ek}).
    To be more explicit,
        this is the bigraded commutative algebra
        \[
            W_1(H_{*, *}(X))
            =
            \Lambda_{\F_\p}(L)
        \]
        where $L$ is the trigraded vector space
            with homogeneous basis the Dyer--Lashof monomials $Q_{\F_\p}^I(y)$
                for $I$ satisfying familiar admissibility and excess conditions
                and for $y$ a basic Lie word in
                    $
                        \{
                            \sigma,
                            \tau,
                            d,
                            b,
                            u,
                            \lambda,
                            \rho,
                            x_i | i
                        \}.
                    $
    The homotopy cofibers defining $E_2(X) \sslash (\sigma, \tau)$ have
        associated long exact sequences in homology.
    By \cref{eqn:cohen},
        these degenerate into short exact sequences,
        which then fit into diagrams
        \[
            \begin{tikzcd}[column sep = tiny]
                    & 0 \ar[d]
                        & 0 \ar[d]
                            & 0 \ar[d]
                                &
                \\
                0 \ar[r]
                    & H_{n, *}(E_2(X))
                    \ar[r, "\cdot \sigma"]
                    \ar[d, "\cdot \tau"]
                        & H_{n + 1, *}(E_2(X))
                        \ar[r]
                        \ar[d, "\cdot \tau"]
                            & H_{n + 1, *}(E_2(X) \sslash \sigma)
                            \ar[r]
                            \ar[d]
                                & 0
                \\
                0 \ar[r]
                    & H_{n + 2, *}(E_2(X))
                    \ar[r, "\cdot \sigma"]
                    \ar[d]
                        & H_{n + 3, *}(E_2(X))
                        \ar[r]
                        \ar[d]
                            & H_{n + 3, *}(E_2(X) \sslash \sigma)
                            \ar[r]
                            \ar[d]
                                & 0
                \\
                0 \ar[r]
                    & H_{n + 2, *}(E_2(X) \sslash \tau)
                    \ar[r]
                    \ar[d]
                        & H_{n + 3, *}(E_2(X) \sslash \tau)
                        \ar[r]
                        \ar[d]
                            & H_{n + 3, *}(E_2(X) \sslash (\sigma, \tau))
                            \ar[r]
                            \ar[d]
                                & 0
                \\
                    & 0
                        & 0
                            & 0.
                                &
            \end{tikzcd}
        \]
    Therefore,
        \[
            H_{*, *}(E_2(X) \sslash (\sigma, \tau))
            \cong
            W_1(H_{*, *}(X)) / (\sigma, \tau).
        \]
    We have now computed the $\tilde E^1$-term of \cref{eqn:ss_mod_sigma_tau_2}
        after forgetting the internal grading.

    The slope of a homology class $x$ in bidegree $(n, d)$ is the number
        $
            \frac{d}{n} \in \Q \cup \{\infty\}.
        $
    Products $x \cdot y$ have slope greater than or equal to
        the least of the slopes of $x, y$.
    Browder brackets $[x, y]$ have slope strictly greater than
        the least of the slopes of $x, y$.
    The Dyer--Lashof operations never decrease slope either.
    $\sigma$ and $\tau$ are the only generators in $X$
        that have slope $< \frac13$.
    Thus, only basis elements of $L$ involving $\sigma$ and $\tau$
        can have slope $< \frac13$.
    In fact, the only basis elements of $L$
        that have slope $< \frac13$ are
            $
                \sigma \in W_1(H_{*, *}(X))_{1, 0},
            $
            $
                \tau \in W_1(H_{*, *}(X))_{2, 0},
            $
            $
                [\tau, \tau] \in W_1(H_{*, *}(X))_{4, 1},
            $
            and when $\p = 2$,
            $
                Q^1_{\F_2}(\tau) \in W_1(H_{*, *}(X))_{4, 1}.
            $

    \begin{claim}
        $\tilde E^2_{n, p, q} = 0$
            for $\frac{p + q}{n} < \frac13$.
    \end{claim}

    \begin{claimproof}
        To show this statement,
            we may forget about the internal grading $p$
            by defining
            $
                \tilde E^r_{n, k}
                \defeq
                \bigoplus_{p + q = k}
                    \tilde E^r_{n, p, q}.
            $
        The claim then becomes
            that $\tilde E^2_{n, k} = 0$ when $\frac{k}{n} < \frac13$.
        $\tilde E^2_{*, *}$ is the homology of the free differential graded algebra
            \[
                (\tilde E^1_{*, *}, d^1)
                =
                (\Lambda_{\F_\p}(L / \<\tau, \sigma\>), d^1).
            \]
        To estimate these homology groups,
            we make use of an auxiliary filtration.
        Abuse notation and write
            $
                Q^1_\Z(x) \defeq Q^1_\Z(x) \otimes \F_\p
                \in H_{*, 0}(R_\p).
            $
        Filter $\tilde E^1_{*, *}$
            by giving
                $Q^1_\Z(\tau)$ and
                $\rho$
            filtration $0$,
            and filtering the remaining basis elements by homological degree,
            and extending this filtration multiplicatively.
        This filtration respects the differentials.
        The desirable effect of this filtration is
            that taking the associated graded filters away most of the $d^1$-differential,
            leaving only the differentials that we need.
        In particular,
            the associated graded of this filtration is
            \[
                U
                \defeq
                (\Lambda_{\F_\p}(
                    L / \<\sigma, \tau, Q_\Z^1(\tau), \rho\>
                ), d^1 = 0)
                \otimes
                (\Lambda_{\F_\p}[Q_\Z^1(\tau), \rho], \delta)
            \]
            where
                $\delta(\rho) = Q_\Z^1(\sigma)$ and
                $\delta(Q_\Z^1(\sigma)) = 0$
                (all generators of degree $0$ were killed).
        The first factor of the tensor product has itself as homology.
        If $\p$ is odd,
            the homology of the second factor of the tensor product is $0$.
        If $\p = 2$,
            the homology of the second factor of the tensor product is $\Lambda_{\F_2}[\rho^2]$.
        Now, there is a trigraded spectral sequence
            \[
                \overline E^1_{n, p, q}
                =
                H_{n, p + q, p}(U)
                \implies
                \tilde E^2_{n, p + q}.
            \]
        By the considerations preceding the statement of the claim
            and by properties
                \cref{item:integral_refinement:1} and
                \cref{item:integral_refinement:2}
            of $Q^1_\Z$ stated in the beginning of the section,
            all basis elements for $\overline E^1_{n, p, q}$ have slope
            $
                \frac{p + q}{n} \geq \frac13.
            $
        It follows that $\overline E^1_{n, p, q} = 0$
            when $\frac{p + q}{n} < \frac13$,
            and hence that $\tilde E^2_{n, k} = 0$ when $\frac{k}{n} < \frac13$,
            as desired.
    \end{claimproof}

    The statement now follows from
        the spectral sequences \cref{eqn:ss_mod_sigma_tau}, \cref{eqn:ss_mod_sigma_tau_2} and
        the claim.
\end{proof}

\begin{proof}[Proof of \cref{thm:secondary_stability}]
    We have a cofiber sequence
        \[
            R_\Z \sslash \tilde\tau \otimes S_\Z^{1, 0}
            \tox{\blank \cdot \sigma}
            R_\Z \sslash \tilde\tau
            \to
            R_\Z \sslash (\tilde\sigma, \tilde\tau)
            \tox{\del}
            R_\Z \sslash \tilde\tau \otimes S_Z^{1, 1}
        \]
        in $\sMod_\Z^{\N_0}$.
    Since $H_{n, d}((\RO)_\Z \sslash \tilde\tau) = 0$
        for $\frac{d}{n} < \frac13$ by \cite[\thm{B(i)}]{GKRW19MCG},
        we have
            $
                H_{n, d}(R_\Z \sslash \tilde\tau)
                =
                H_d(N\Gamma_{n, 1}, N\Gamma_{n - 2, 1})
            $
            in the same range.
    From this and \cref{lem:secondary_stabilization_map_id},
        it follows that
            in the range $\frac{d}{n} < \frac13$ and $d \geq 1$,
            the first map in the cofiber sequence induces
                the secondary stabilization map of \cref{def:map1} on $H_{n, d}$,
        and so the statement follows from \cref{thm:quotient_vanishing}.
\end{proof}

\begin{proof}[Proof of \cref{thm:secondary_stability_2}]
    We have a cofiber sequence
        \[
            R_\Z \sslash \tilde\sigma \otimes S_\Z^{2, 0}
            \tox{\blank \cdot \tilde\tau}
            R_\Z \sslash \tilde\sigma
            \to
            R_\Z \sslash (\tilde\sigma, \tilde\tau)
            \tox{\del}
            R_\Z \sslash \tilde\sigma \otimes S_Z^{2, 1}
        \]
        in $\sMod_\Z^{\N_0}$.
    The theorem now follows from
        \cref{lem:secondary_stabilization_map_id_2} and
        \cref{thm:quotient_vanishing}.
\end{proof}

\begin{remark}
    The proof of \cref{thm:secondary_stability_2} in fact recovers
        the best known version of Harer stability \cite[Theorem B(i)]{GKRW19MCG},
        due to the orientable terms in \cref{lem:secondary_stabilization_map_id_2}.
\end{remark}

\begin{remark}\label{rem:g_odd}
    \cref{thm:quotient_vanishing} implies
        a version of \cref{thm:secondary_stability_2} for $g$ odd.
    The resulting statement regards a map of the type
        \[
            H_d(N\Gamma_{g - 2, 1}, N\Gamma_{g - 3, 1} * \Gamma_{(g - 3)/2, 1})
            \to
            H_d(N\Gamma_{g, 1}, N\Gamma_{g - 1, 1} * \Gamma_{(g - 1)/2, 1}).
        \]
    To define the participating relative homology groups,
        one is forced to pick diffeomorphisms
        \[
            S_{h, 1} \oplus N_{1, 1}
            \tox{\cong}
            N_{2h + 1, 1}
        \]
        of which there are no canonical choices (see also \cref{rem:big_cat}).
    Due to this ambiguity,
        we choose not to state a version of \cref{thm:secondary_stability_2} for $g$ odd,
        as it seems less useful.
\end{remark}

\appendix

\section{Classifying spaces of braided monoidal categories}
\label{sec:classifying}

Let $(\mathscr C, \otimes, U, \beta)$ be a braided strict monoidal category.
Its classifying space $B \mathscr C$ inherits the structure of a unital $E_2$-algebra.
One can exhibit this structure in an abstract way by using
    the formal language of model categories and derived Kan extensions;
    see \cite[17.1]{GKRW19Ek}.
However, in \cref{sec:map} we needed to concretely understand the sense in which the
    half Dehn twist of $2$-cubes corresponds to the braiding $\beta$.
For this purpose,
    we describe another, more explicit way to construct
        a unital $E_2$-algebra from a strict braided monoidal category.


\begin{definition}
    $\Sym_0$ is the terminal category.
    For $n \geq 1$,
        $\Sym_n$ is the discrete category on the object set
        \[
            \Sigma_n
            \defeq
            \{
                \text{permutations $\{1, \ldots, n\} \tox{\cong} \{1, \ldots, n\}$}
            \}.
        \]
\end{definition}

\begin{definition}
    The (symmetric) operad $S$ in simplicial sets has
        \[
            S(n) \defeq N\Sym_n
        \]
        where $\Sigma_n$ acts through postcomposition.
    For $n \geq 1$,
        its operadic composition
        $
            S(n) \times S(r_1) \times \cdots \times S(r_n)
            \to
            S(r_1 + \cdots + r_n)
        $
        is induced by the functor
        \begin{align*}
            \Sym_n \times \Sym_{r_1} \times \cdots \times \Sym_{r_n}
            &\to
            \Sym_{r_1 + \cdots + r_n}.
            \\
            (\sigma_0, \sigma_1, \ldots, \sigma_n)
            &\mapsto
            \sigma_{\sigma_0(1)} \oplus \cdots \oplus \sigma_{\sigma_0(n)},
        \end{align*}
        where
            $\oplus$ denotes juxtaposition of permutations.
\end{definition}

\begin{definition}
    $\Braid_0$ is the terminal category.
    For $n \geq 1$,
        the groupoid $\Braid_n$ has
        $
            \Ob(\Braid_n)
            \defeq
            \Sigma_n
        $
        as set of objects
        and
        \[
            \Braid_n(\sigma_1, \sigma_2)
            \defeq
            \{
                b \in B_n
                \mid
                t(b) = \sigma_2 \circ \sigma_1^{-1}
            \}
        \]
        as morphisms,
        where
            $B_n$ is the $n$'th braid group and
            $
                t : B_n \to \Sigma_n = \Ob(\Braid_n)
            $
            is the canonical homomorphism.
    Composition is induced by the group operation of $B_n$,
        \begin{align*}
            \circ :
                \Braid_n(\sigma_2, \sigma_3) \times \Braid_n(\sigma_1, \sigma_2)
                &\to
                \Braid_n(\sigma_1, \sigma_3),
            \\
            (b_1, b_2)
            &\mapsto
            b_1 b_2 \in B_n.
        \end{align*}
    In particular, for $n \geq 1$,
        $
            \Aut_{\Braid_n}(\sigma) = \tilde B_n
        $
        is the pure braid group.
\end{definition}

For $n \geq 1$,
    there is a natural action
    $
        \Sigma_n \acts \Braid_n
    $
    which
        on objects is given by postcomposing by the acting permutation and
        on morphisms is given by relabeling braids.

\begin{definition}
    The (symmetric) operad $B$ in simplicial sets has
        \[
            B(n) \defeq N\Braid_n
        \]
        where $\Sigma_n$ acts through its natural action on $\Braid_n$.
    For $n \geq 1$,
        its operadic composition
        $
            B(n) \times B(r_1) \times \cdots \times B(r_n)
            \to
            B(r_1 + \cdots + r_n)
        $
        is induced by the functor
        \begin{align*}
            \Braid_n \times \Braid_{r_1} \times \cdots \times \Braid_{r_n}
            &\to
            \Braid_{r_1 + \cdots + r_n}.
            \\
            (\sigma_0, \sigma_1, \ldots, \sigma_n)
            &\mapsto
            \sigma_{\sigma_0(1)} \oplus \cdots \oplus \sigma_{\sigma_0(n)},
            \\
            (\id_{\sigma_0}, b_1, \ldots, b_n)
            &\mapsto
            b_{\sigma_0(1)} \oplus \cdots \oplus b_{\sigma_0(n)},
            \\
            (b, \id, \ldots, \id)
            &\mapsto
            b_*
        \end{align*}
        where
            $\oplus$ denotes juxtaposition of braids or permutations, and
            $b_* \in B_{r_1 + \ldots + r_n}$ is block braid induced by $b \in B_n$.
\end{definition}

Let $\mathcal C_k^+$ denote
    the unital little $k$-cubes operad (\cite[Definition 12.1]{GKRW19Ek}).
There is a natural map of operads
    \[
        f : \mathcal C_1^+ \to \mathcal C_2^+,
    \]
    sending $([a_1, b_1], \ldots, [a_n, b_n])$
    to $([a_1, b_1] \times I, \ldots, [a_n, b_n] \times I)$.
Furthermore, there is a weak equivalences of operads
    \[
        g : \mathcal C_1^+ \to S,
    \]
    which sends $([a_1, b_1], \ldots, [a_n, b_n])$ to the permutation $\sigma \in \Sigma_n$
    such that $a_{\sigma(1)} < \ldots < a_{\sigma(n)}$.

\begin{proposition}\label{prop:e2_models}
    $B$ is $\Sigma$-cofibrant and there is a zig-zag of weak equivalences of pairs of operads
        \[
            (B, S)
            \tox{\simeq}
            \cdots
            \fromx{\simeq}
            (\mathcal C_2^+, \mathcal C_1^+).
        \]
\end{proposition}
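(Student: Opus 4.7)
The plan is to handle $\Sigma$-cofibrancy directly and then produce the zig-zag of pairs by interposing the fundamental groupoid operads.

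\textbf{$\Sigma$-cofibrancy.} First, I would observe that $\Sigma_n$ acts freely on $\Ob(\Braid_n) = \Sigma_n$ by postcomposition, and by the definition of the action on morphisms (relabelling braid strands) this extends to a free action on each set of $p$-simplices of $N\Braid_n$. Hence $B(n)$ is a free $\Sigma_n$-simplicial set and so is cofibrant in the projective model structure on simplicial $\Sigma_n$-sets. Letting $n$ range gives $\Sigma$-cofibrancy of $B$. (The corresponding statement for $S$ is immediate because $S(n) = \Sigma_n$ is a free discrete $\Sigma_n$-set.)

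\textbf{The zig-zag.} The plan is to apply the fundamental groupoid functor $\Pi$ levelwise to the pair $(\mathcal{C}_2^+, \mathcal{C}_1^+)$ to produce a pair of operads in groupoids $(\Pi \mathcal{C}_2^+, \Pi \mathcal{C}_1^+)$, and then take nerves to obtain a pair of operads in simplicial sets equipped with a natural map of pairs
\[
    (\mathcal{C}_2^+, \mathcal{C}_1^+) \to (N\Pi \mathcal{C}_2^+, N\Pi \mathcal{C}_1^+).
\]
Each component of $\mathcal{C}_k^+(n)$ is a $K(\pi,1)$ — contractible for $k=1$, and a $K(\tilde B_n, 1)$ for $k=2$ — so this map is a levelwise weak equivalence. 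I would then identify $\Pi \mathcal{C}_2^+ \cong \Braid$ and $\Pi \mathcal{C}_1^+ \cong \Sym$ as $\Sigma$-operads in groupoids, compatibly with the inclusion $f$. On objects both sides are $\Sigma_n$, with $\Pi \mathcal{C}_k^+(n)$ assigning to a configuration its induced left-to-right ordering — this is precisely the map $g$ from the excerpt. On morphisms, the classical identification $\pi_1(\mathcal{C}_2(n)/\Sigma_n) \cong B_n$, together with the fact that configurations of intervals are contractible, yields the required groupoid isomorphisms; checking that operadic composition corresponds to block-braiding of braids realises these as isomorphisms of operads. Taking nerves then produces
\[
    (B, S) \fromx{\cong} (N\Pi \mathcal{C}_2^+, N\Pi \mathcal{C}_1^+) \fromx{\simeq} (\mathcal{C}_2^+, \mathcal{C}_1^+),
\]
the desired zig-zag of pairs.

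\textbf{Main obstacle.} The main technical difficulty I anticipate is strictness: the functor $\Pi$ preserves finite products only up to natural isomorphism, so a priori $\Pi \mathcal{C}_k^+$ carries only a pseudo-operad structure in groupoids, and the operadic composition formulas above should be interpreted accordingly. The cleanest remedy is to fix a strict product-preserving replacement of $\Pi$ on the relevant class of spaces — for instance by composing with a standard strictification of pseudofunctors in groupoids — or, equivalently, to rectify pseudo-operads in groupoids. Either route introduces at most one further intermediate operad into the zig-zag but does not otherwise complicate the identification of operadic composition with juxtaposition and block-braiding.
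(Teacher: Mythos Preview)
Your overall strategy---pass through the fundamental groupoid and use that $\mathcal C_2^+(n)$ is aspherical---is exactly the paper's. The $\Sigma$-cofibrancy argument is identical. Your ``main obstacle'' is actually not one: for spaces, the fundamental groupoid functor preserves finite products on the nose (objects and morphism sets of $\Pi(X\times Y)$ and $\Pi(X)\times\Pi(Y)$ coincide), so $\Pi\mathcal C_k^+$ really is a strict operad in groupoids and $N\Pi\mathcal C_k^+$ a strict operad in $\SSet$.

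The genuine gap is elsewhere. You assert a map $N\Pi\mathcal C_2^+\to B$ given on objects by ``left-to-right ordering'', and even write it as an isomorphism. Neither holds: the objects of $\Pi\mathcal C_2^+(n)$ are \emph{all} little $2$-cube configurations, an uncountable set, so no isomorphism with $\Braid_n$ is possible; and a generic planar configuration has no left-to-right ordering, so the proposed functor to $\Braid_n$ is not even defined. The paper repairs exactly this point by inserting one more node in the zig-zag: it passes to the full subgroupoid $\Pi(\mathcal C_2^+(n),\,f(\mathcal C_1^+(n)))$ on the ``vertical'' configurations in the image of $f$. These \emph{do} have a left-to-right ordering, the assignment to $\Braid_n$ is a well-defined equivalence of categories and a map of operads (composition of vertical configurations is vertical), and the inclusion of this full subgroupoid into $\Pi\mathcal C_2^+(n)$ is the essentially surjective equivalence that completes the zig-zag
\[
(B,S)\ \fromx{\simeq}\ \bigl(N\Pi(\mathcal C_2^+,f(\mathcal C_1^+)),\,\mathcal C_1^+\bigr)\ \tox{\simeq}\ \bigl(N\Pi\mathcal C_2^+,\,\mathcal C_1^+\bigr)\ \fromx{\simeq}\ (\mathcal C_2^+,\mathcal C_1^+).
\]
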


\begin{proof}
    It is $\Sigma$-cofibrant
        because each action $\Sigma_n \acts B(n)_p$ is free.
    We now exhibit the zig-zag.
    For each $n \geq 0$,
        there is a diagram,
        \begin{equation}\label{dia:zigzag_b}
            \begin{tikzcd}[column sep = normal]
                S(n)
                \ar[d]
                    & \mathcal C_1^+(n)
                    \ar[d, "f_* \circ \eta"]
                    \ar[r, equals]
                    \ar[l, "g", "\simeq"']
                        & \mathcal C_1^+(n)
                        \ar[d, "f_* \circ \eta"]
                        \ar[r, equals]
                            & \mathcal C_1^+(n)
                            \ar[d, "f"]
                \\
                B(n)
                    & N\Pi(\mathcal C_2^+(n), f(\mathcal C_1^+(n)))
                    \ar[l, "\simeq"', "(1)"]
                    \ar[r, "\simeq", "(2)"']
                        & N\Pi(\mathcal C_2^+(n))
                            & \mathcal C_2^+(n).
                            \ar[l, "\simeq"', "(3)"]
            \end{tikzcd}
        \end{equation}
    $\Pi(X)$ denotes the fundamental groupoid of $X$
        and $\Pi(X, A)$ for $A \subseteq X$ denotes
            the full subgroupoid of $\Pi(X)$ on the objects $A$.
    $\eta$ denotes the $1$-truncation map
        $
            \mathcal C_1^+(n)
            \to
            N \Pi(\mathcal C_1^+(n)).
        $
    (1) is induced by the equivalence of categories
        $
            \Pi(\mathcal C_2^+(n), f(\mathcal C_1^+(n)))
            \to
            \Braid_n
        $
        that identifies the various vertical configurations corresponding to the same permutation.
    (2) is induced by
        the essentially surjective inclusion of
            the full subcategory
            $
                \Pi(\mathcal C_2^+(n), f(\mathcal C_1^+(n)))
            $
            into $\Pi(\mathcal C_2^+(n))$.
    (3) is the $1$-truncation map and is a weak equivalence
        since its source is an Eilenberg--MacLane space.
    Ranging $n$, the diagrams assemble into a diagram of operads in $\SSet$,
        exhibiting the desired zig-zag.
\end{proof}

\begin{definition}
    Let $(\mathscr C, \otimes, U, \beta)$ be a braided strict monoidal category.
    For each $n \geq 1$,
        there is a functor
        \begin{align*}
            \Braid_n \times \mathscr C^n
            &\to
            \mathscr C,
            \\
            (\sigma, x_1, \ldots, x_n)
            &\mapsto
            x_{\sigma(1)} \otimes \cdots \otimes x_{\sigma(n)},
            \\
            (\id_\sigma, f_1, \ldots, f_n)
            &\mapsto
            f_{\sigma(1)}
            \otimes
            \cdots
            \otimes
            f_{\sigma(n)},
            \\
            (b : \sigma_1 \to \sigma_2, \id_{x_1}, \ldots, \id_{x_n})
            &\mapsto
            b_*.
        \end{align*}
    Here,
        $
            b_* :
                x_{\sigma_1(1)} \otimes \cdots \otimes x_{\sigma_1(n)}
                \to
                x_{\sigma_2(1)} \otimes \cdots \otimes x_{\sigma_2(n)}
        $
        denotes the braiding isomorphism corresponding to $b \in B_n$.
    There is also a functor
        \[
            \Braid_0 \to \mathscr C
        \]
        sending the unique object in $\Braid_0$ to the unit $U$.
    Taking the nerve $N$,
        we get for each $n \geq 0$ a map of simplicial sets
        \[
            N\Braid_n \times N\mathscr C^n
            \to
            N\mathscr C.
        \]
    These maps endow $N\mathscr C \in \SSet$ with the structure of a $B$-algebra.
\end{definition}

\begin{remark}\label{rem:graded_classifying}
    The construction also works for graded categories.
    Let $(\N_0, +, 0)$ denote the discrete, monoidal category
        whose object set is $\N_0$ with the additive monoidal structure.
    A graded braided strict monoidal category $(\mathscr C, \otimes, U, \beta, r)$ is
        a braided strict monoidal category $(\mathscr C, \otimes, U, \beta)$
        together with a monoidal functor $r : \mathscr C \to \N_0$.
    The graded nerve $N_\gr \mathscr C \in \SSet^{\N_0}$ has
        \[
            N_\gr \mathscr C(n) \defeq N(d^{-1}(n)).
        \]
    The construction above gives a map of graded simplicial sets
        \[
            0_*(N\Braid_n) \otimes (N_\gr\mathscr C)^{\otimes n}
            \to
            N_\gr\mathscr C,
        \]
        where
            $0_* : \SSet \to \SSet^{\N_0}$ is the left adjoint to the projection to $0$, and
            $\otimes : \SSet^{\N_0} \times \SSet^{\N_0} \to \SSet^{\N_0}$ is
                the Day convolution monoidal product.
    This gives $N_\gr\mathscr C$ the structure of an $B$-algebra in $\SSet^{\N_0}$.
\end{remark}

\subsection{Comparison of $E_1$-algebras}

We now prove a little technical lemma
    that we need to ascertain that we can make use of
        the $E_1$-splitting complex theory of \cite{GKRW19Ek}
    notwithstanding how we define our operads and algebras differently.
Let $(\mathscr G, \otimes, U, \beta, r)$ be a
    graded braided strict monoidal category
    in the sense of \cref{rem:graded_classifying}.
Furthermore, assume that $\mathscr G$ is a groupoid.

\begin{definition}\label{def:alg_model}
    Let
        $
            \underline * \in \SSet^{\mathscr G}
        $
        be the $\mathscr G$-graded simplicial set with
        $
            \underline *(x)
            =
            *
        $
        for all $x \in \mathscr G$.
    $\underline *$ admits
        a unique action from $\mathcal C_1^+$.
    Define
        \[
            R
            \defeq
            \L r_*(\underline *)
            =
            r_*(c \underline *)
            \in \Alg_{\mathcal C_1^+}(\SSet^{\N_0}),
        \]
        the derived left Kan extension of $\underline *$ along $r$.
    Here,
        \[
            c : \Alg_{\mathcal C_1^+}(\SSet^{\N_0}) \to \Alg_{\mathcal C_1^+}(\SSet^{\N_0})
        \]
        denotes a cofibrant replacement functor for the projective model structure.
\end{definition}

Recall the map
    $
        g : \mathcal C_1^+ \to S
    $
    defined earlier.

\begin{lemma}\label{lem:alg_model}
    There is a zig-zag of weak equivalences between $R$ and
        $
            g^* N_\gr \mathscr G
        $
        in $\Alg_{\mathcal C_1^+}(\SSet^{\N_0})$
        with the projective model structure.
\end{lemma}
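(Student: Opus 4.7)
The plan is to identify both $R$ and $g^* N_\gr\mathscr G$ pointwise with the graded nerve of $\mathscr G$ and then to check that the two resulting $\mathcal C_1^+$-algebra structures both come from the monoidal product of $\mathscr G$ together with the ordering of inputs extracted by $g$. First I would compute the underlying graded simplicial set of $R$. Since $r : \mathscr G \to \N_0$ lands in a discrete category, the fibre $r^{-1}(n)$ is a subgroupoid of $\mathscr G$, and the derived left Kan extension satisfies
\[
    R(n) \;=\; (r_* c\underline *)(n) \;\simeq\; N(r^{-1}(n)) \;=\; N_\gr\mathscr G(n),
\]
because $\L r_*$ computes, in each fibre over $n$, the classifying space of $r^{-1}(n)$. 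This identifies $R$ with $N_\gr\mathscr G$ as graded simplicial sets.

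Next I would match the two $\mathcal C_1^+$-algebra structures on $N_\gr\mathscr G$. On $R$, the structure is inherited from the terminal (hence commutative, hence $\mathcal C_1^+$-) algebra structure on $\underline *$ together with the fact that $r$ is strict monoidal, so that derived left Kan extension along $r$ is monoidal for Day convolution. Unwinding, the arity-$n$ operation sends $(c, x_1, \ldots, x_n) \mapsto x_{\sigma(1)} \otimes \cdots \otimes x_{\sigma(n)}$ where $\sigma = g(c) \in \Sigma_n$. On the other side, the $B$-algebra structure on $N_\gr\mathscr G$ from the appendix has an underlying $S$-algebra structure, obtained by restricting along the operad map $S \to B$ that includes each $\Sym_n$ into $\Braid_n$ as the identity braids; its arity-$n$ action is exactly $(\sigma, x_1, \ldots, x_n) \mapsto x_{\sigma(1)} \otimes \cdots \otimes x_{\sigma(n)}$. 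Pulling back along $g$ then replaces $\sigma$ by $g(c)$, so the two $\mathcal C_1^+$-actions on $N_\gr\mathscr G$ coincide on the nose.

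To produce an honest zig-zag of weak equivalences in $\Alg_{\mathcal C_1^+}(\SSet^{\N_0})$, I would exploit that both $R$ and $g^* N_\gr\mathscr G$ can be realized as $r_*$ applied to a suitable cofibrant resolution of $\underline *$: a single projective-cofibrant resolution then provides a common source mapping compatibly to both targets. Combined with the pointwise weak equivalences of the first paragraph, this gives the desired zig-zag. The main technical obstacle is the bookkeeping required to turn the pointwise matching of structures into a genuine zig-zag of $\mathcal C_1^+$-algebra maps: one must verify Day-convolution monoidality of $\L r_*$ applied to a commutative algebra and its compatibility with the appendix's construction of $N_\gr\mathscr G$ as a $B$-algebra. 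No new mathematical input is required, since both structures on $N_\gr\mathscr G$ are built from $\otimes$ on $\mathscr G$ together with the ordering encoded by $g$, but the verification is notationally involved.
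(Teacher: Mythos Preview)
Your outline captures the correct heuristic picture, but there is a genuine gap at the point where you pass from ``the two actions on $N_\gr\mathscr G$ coincide on the nose'' to an actual zig-zag in $\Alg_{\mathcal C_1^+}(\SSet^{\N_0})$. The object $R = r_*(c\underline *)$ is \emph{not} literally $N_\gr\mathscr G$; it is some large cofibrant model that is only weakly equivalent to it. So comparing formulas for the algebra structures on $N_\gr\mathscr G$ does not by itself produce any map of $\mathcal C_1^+$-algebras involving $R$. Your third paragraph asserts that $g^* N_\gr\mathscr G$ can be realized as $r_*$ applied to a resolution of $\underline*$, but you never say which object in $\SSet^{\mathscr G}$ plays this role, and this is precisely the content that is missing.

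The paper supplies this missing object explicitly: set $T(x) \defeq N(\mathscr G_{/x})$, the nerve of the overcategory, with the evident monoid structure coming from $\otimes$. One then checks directly that $r_* T = g^* N_\gr\mathscr G$ as $\mathcal C_1^+$-algebras (strictly, not just up to homotopy). Because each $\mathscr G_{/x}$ is a groupoid with terminal object, the map $T \to \underline *$ is a trivial fibration, so the cofibrant replacement map $c\underline * \to \underline *$ lifts to a weak equivalence $c\underline * \to T$ of $\mathcal C_1^+$-algebras in $\SSet^{\mathscr G}$. Applying $r_*$ (which preserves this weak equivalence since the underlying objects are cofibrant) yields a single weak equivalence $R \to g^* N_\gr\mathscr G$ of $\mathcal C_1^+$-algebras. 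This overcategory trick is the step your proposal is lacking; once you have it, the rest of your bookkeeping about matching actions via $g$ becomes unnecessary.
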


\begin{proof}
    $g^* N_\gr \mathscr G$ may also be described as
        the left Kan extension along $r$ of
        the $\mathcal C_1^+$-algebra $T$
        arising from the obvious monoid with
        $
            T(x) \defeq N(\mathscr G_{/x})
        $
        the (contractible) nerve of the overcategory.
    Since $\mathscr G_{/x}$ is a groupoid for each $x$,
        the map $T \to \underline *$ is a trivial fibration
        in the projective model structure,
        hence the map $c \underline * \to \underline *$ lifts to a weak equivalence
        $
            c \underline * \to T,
        $
        which after left Kan extending along $r$ descends to a weak equivalence
        $
            R \to g^* N_\gr(\mathscr G)
        $
        in $\Alg_{\mathcal C_1^+}(\SSet^{\N_0})$
        since the underlying objects are cofibrant.
\end{proof}

\clearpage
\bibliographystyle{alpha}
\bibliography{ref}
\clearpage

\end{document}